\newtheorem{thm}{Theorem}[section]
\newtheorem*{thm*}{Theorem}
\newtheorem{cor}[thm]{Corollary}
\newtheorem{corollary}[thm]{Corollary}
\newtheorem{example}[thm]{Example}
\newtheorem{lem}[thm]{Lemma}
\newtheorem{prop}[thm]{Proposition}
\newtheorem*{prop*}{Proposition}
\newtheorem*{conj*}{Conjecture}
\newtheorem*{dfn*}{Definition}
\theoremstyle{definition}
\newtheorem{rem}[thm]{\textbf{Remark}}
\newtheorem*{rmk*}{Remark}
\newtheorem*{fact*}{Fact}
\theoremstyle{proof}
\numberwithin{equation}{section}
\newcommand{\vol}{\textrm{Vol}}
\newcommand{\norm}[1]{\left\Vert#1\right\Vert}
\newcommand{\abs}[1]{\left\vert#1\right\vert}
\newcommand{\set}[1]{\left\{#1\right\}}
\newcommand{\brac}[1]{\left(#1\right)}
\newcommand{\scalar}[1]{\left \langle #1 \right \rangle}
\newcommand{\Real}{\mathbb{R}}
\newcommand{\eps}{\epsilon}
\newcommand{\I}{\mathcal{I}}
\newcommand{\Ric}{\mbox{\rm{Ric}}}
\newcommand{\II}{\mbox{\rm{II}}}
\newcommand{\Jac}{\text{Jac}}
\renewcommand{\div}{\text{div}}
\renewcommand{\S}{\mathcal{S}}
\newcommand{\Var}{\text{\rm{Var}}}
\newcommand{\CD}{\text{CD}}
\def\Xint#1{\mathchoice
{\XXint\displaystyle\textstyle{#1}}{\XXint\textstyle\scriptstyle{#1}}{\XXint\scriptstyle\scriptscriptstyle{#1}}{\XXint\scriptscriptstyle\scriptscriptstyle{#1}}\!\int}
\def\XXint#1#2#3{{\setbox0=\hbox{$#1{#2#3}{\int}$}
\vcenter{\hbox{$#2#3$}}\kern-.5\wd0}}
\def\dashint{\Xint-}
\begin{document}

\date{}

\title{Poincar\'e and Brunn--Minkowski inequalities on the boundary of weighted Riemannian manifolds}
\author{Alexander V. Kolesnikov\textsuperscript{1} and Emanuel Milman\textsuperscript{2}}

\footnotetext[1]{Faculty of Mathematics, Higher School of Economics, Moscow, Russia. 
The author was supported by RFBR project 17-01-00662 and DFG project RO 1195/12-1. The article was prepared within the framework of the Academic Fund Program at the National Research University Higher School of Economics (HSE) in 2017-2018  (grant No 17-01-0102) and by the Russian Academic Excellence Project ``5-100". Emails: akolesnikov@hse.ru, sascha77@mail.ru.}

\footnotetext[2]{Department of Mathematics, Technion - Israel
Institute of Technology, Haifa 32000, Israel. Supported by BSF (grant no. 2010288).
The research leading to these results is part of a project that has received funding from the European Research Council (ERC) under the European Union's Horizon 2020 research and innovation programme (grant agreement No 637851). Email: emilman@tx.technion.ac.il.}

\maketitle

\begin{abstract}
We study a Riemannian manifold equipped with a density which satisfies the Bakry--\'Emery Curvature-Dimension condition (combining a lower bound on its generalized Ricci curvature and an upper bound on its generalized dimension). We first obtain a Poincar\'e-type inequality on its boundary assuming that the latter is locally-convex; this generalizes a purely Euclidean inequality of Colesanti, originally derived as an infinitesimal form of the Brunn-Minkowski inequality, thereby precluding any extensions beyond the Euclidean setting.  
A dual version for generalized mean-convex boundaries is also obtained, yielding spectral-gap estimates for the weighted Laplacian on the boundary. Motivated by these inequalities, a new geometric evolution equation is proposed, which extends to the Riemannian setting the Minkowski addition operation of convex domains, a notion thus far confined to the purely linear setting. This geometric flow is characterized by having parallel normals (of varying velocity) to the evolving hypersurface along the trajectory, and is intimately related to a homogeneous Monge-Amp\`ere equation on the exterior of the convex domain. Using the aforementioned Poincar\'e-type inequality on the boundary of the evolving hypersurface, we obtain a novel Brunn--Minkowski inequality in the weighted-Riemannian setting, amounting to a certain concavity property for the weighted-volume of the evolving enclosed domain. All of these results appear to be new even in the classical non-weighted Riemannian setting. 
\end{abstract}

\section{Introduction}

Throughout the paper we consider a compact \emph{weighted-manifold} $(M,g,\mu)$, namely a compact smooth complete connected and oriented $n$-dimensional Riemannian manifold $(M,g)$ with boundary $\partial M$, equipped with a measure:
\[
\mu = \exp(-V) d {\vol}_M ~,
\] 
where $\vol_M$ is the Riemannian volume form on $M$ and $V \in C^2(M)$ is twice continuously differentiable. The boundary $\partial M$ is assumed to be a $C^2$ manifold with outer unit-normal $\nu = \nu_{\partial M}$. The corresponding symmetric diffusion operator with invariant measure $\mu$, which is called the weighted-Laplacian, is given by:
\[
L  = L_{(M,g,\mu)} := \exp(V) \div_g ( \exp(-V) \nabla_g) = \Delta_g - \scalar{\nabla_g V,\nabla_g} ~,
\]
where $\scalar{\cdot,\cdot}$ denotes the Riemannian metric $g$,  $\nabla = \nabla_g$ denotes the Levi-Civita connection, $\div = \div_g =  tr(\nabla \cdot)$ denotes the Riemannian divergence operator, and $\Delta = \Delta_g = \div_g \nabla_g$ is the Laplace-Beltrami operator.  Indeed, note that with these generalized notions, the usual integration by parts formula is satisfied for $f,h \in C^2(M)$: \[
\int_M (Lf) h d\mu = \int_{\partial M} f_\nu h d\mu_{\partial M} - \int_M \scalar{\nabla f,\nabla h} d\mu = \int_{\partial M} (f_\nu h - h_\nu f) d\mu_{\partial M} +  \int_M (Lh) f d\mu ~,
\]
where  $u_\nu = \nu \cdot u$ and $\mu_{\partial M} := \exp(-V) \vol_{\partial M}$.

The second fundamental form $\II = \II_{\partial M}$ of $\partial M \subset M$ at $x \in \partial M$ is as usual (up to sign) defined by $\II_x(X,Y) = \scalar{\nabla_X \nu, Y}$, $X,Y \in T_x \partial M$. The quantities
\[
H_g(x) := tr(\II_x) ~,~ H_\mu (x) := H_g(x) - \scalar{\nabla V(x) ,\nu(x)} ~,
\]
are called the Riemannian mean-curvature and \emph{generalized} mean-curvature of $\partial M$ at $x \in \partial M$, respectively. It is well-known that $H_g$ governs the first variation of $\vol_{\partial M}$ under the normal-map $t \mapsto \exp(t \nu)$, and similarly $H_\mu$ governs the first variation of $\exp(-V)\vol_{\partial M} = \mu_{\partial M}$ in the weighted-manifold setting, see e.g. \cite{EMilmanGeometricApproachPartI} or Subsection \ref{subsec:Full-BM}. 

\medskip

In the purely Riemannian setting, it is classical that positive lower bounds on the Ricci curvature tensor $\Ric_g$ and upper bounds on the topological dimension $n$ play a fundamental role in governing various Sobolev-type inequalities on $(M,g)$, see e.g. \cite{ChavelEigenvalues,GallotBourbaki,GallotIsoperimetricInqs,LiYauEigenvalues,YauIsoperimetricConstantsAndSpectralGap} and the references therein. In the weighted-manifold setting, the pertinent information on \emph{generalized} curvature and \emph{generalized} dimension may be incorporated into a single tensor, which was put forth by Bakry and \'Emery \cite{BakryEmery,BakryStFlour} (cf. Lott \cite{LottRicciTensorProperties}) following Lichnerowicz \cite{Lichnerowicz1970GenRicciTensorCRAS,Lichnerowicz1970GenRicciTensor}. Setting $\Psi = \exp(-V)$, the $N$-dimensional generalized Ricci curvature tensor
($N \in (-\infty,\infty]$) is defined as:
\begin{equation} \label{eq:Ric-def}
\mbox{\rm{Ric}}_{\mu,N} := \rm{Ric}_g + \nabla^2 V - \frac{1}{N-n} d V\otimes d V = \rm{Ric}_g - (N-n) \frac{\nabla^2  \Psi^{\frac{1}{N-n}}}{\Psi^{\frac{1}{N-n}}} ~, 
\end{equation}
where  $\nabla^2 = \nabla_g^2$ denotes the Riemannian Hessian operator. Note that the case $N=n$ is only defined when $V$ is constant, i.e. in the classical non-weighted Riemannian setting where $\mu$ is proportional to $\vol_M$, in which case $\Ric_{\mu,n}$ boils down to classical Ricci tensor $\Ric_g$. When $N= \infty$ we set:
\[
\Ric_\mu := \Ric_{\mu,\infty} = \Ric_g + \nabla^2 V ~.
\]
The Bakry--\'Emery Curvature-Dimension condition $\CD(\rho,N)$, $\rho \in \Real$, is then the requirement that as 2-tensors on $M$:
\[
\mbox{\rm{Ric}}_{\mu,N} \geq \rho g ~.
\]
Consequently, $\rho$ is interpreted as a generalized-curvature lower bound, and $N$ as a generalized-dimension upper bound.
The $\CD(\rho,N)$ condition has been an object of extensive study over the last two decades (see e.g. also \cite{QianWeightedVolumeThms,LedouxLectureNotesOnDiffusion,CMSInventiones, CMSManifoldWithDensity, VonRenesseSturmRicciChar,BakryQianGenRicComparisonThms,WeiWylie-GenRicciTensor,MorganBook4Ed,EMilmanSharpIsopInqsForCDD, BGL-Book} and the references therein), especially since Perelman's work on the Poincar\'e Conjecture \cite{PerelmanEntropyFormulaForRicciFlow}, and the extension of the Curvature-Dimension condition to the metric-measure space setting by Lott--Sturm--Villani \cite{SturmCD12,LottVillaniGeneralizedRicci}.

\medskip
Recalling the interpretation of $N$ as an upper bound on the generalized-dimension, it is customary in the literature to only treat the case when $N \in [n,\infty]$;
however, our methods will also apply with no extra effort to the case when $N \in (-\infty,0]$, and so our results are treated in this greater generality, which in the Euclidean setting encompasses the entire class of Borell's convex (or ``$1/N$-concave") measures \cite{BorellConvexMeasures} (cf. \cite{BrascampLiebPLandLambda1,BobkovLedouxWeightedPoincareForHeavyTails}). It will be apparent that the more natural parameter is actually $1/N$, with $N=\infty,0$ interpreted as $1/N = 0,-\infty$, respectively, and so our results hold in the range $1/N \in [-\infty,1/n]$. As $d V\otimes d V$ appearing in (\ref{eq:Ric-def}) is a positive semi-definite tensor, the $\CD(\rho,N)$ condition is clearly monotone in $\frac{1}{N-n}$ and hence in $\frac{1}{N}$ in the latter range, so for all  $N_+ \in [n,\infty], N_- \in (-\infty,0]$:
\[
\CD(\rho,n) \Rightarrow \CD(\rho,N_+) \Rightarrow \CD(\rho,\infty) \Rightarrow \CD(\rho,N_-) \Rightarrow \CD(\rho,0) ~;
\]
note that $\CD(\rho,0)$ is the weakest condition in this hierarchy. 
 It seems that outside the Euclidean setting, this extension of the Curvature-Dimension condition to the range $N \in (-\infty,0]$ has not attracted much attention in the weighted-Riemannian and more general metric-measure space setting (cf. \cite{SturmCD12,LottVillaniGeneralizedRicci}); an exception is the work of Ohta and Takatsu \cite{OhtaTakatsuEntropies1,OhtaTakatsuEntropies2}. We expect this gap in the literature to be quickly filled  (in fact, concurrently to posting this work along with its companion paper \cite{KolesnikovEMilmanReillyPart1} on the arXiv, Ohta \cite{Ohta-NegativeN} has posted a first attempt of a systematic treatise of the range $N \in (-\infty,0]$, and subsequently other authors have also begun treating this extended range \cite{EMilmanNegativeDimension,KlartagLocalizationOnManifolds,Wylie-SectionalCurvature,KennardWylie-WeightedSectionalCurvature,EMilmanHarmonicMeasures}). 

\subsection{Poincar\'e-type inequalities on $\partial M$}

Our first main result in this work is the following Poincar\'e-type inequality on $\partial M$, which appears to be novel in the Riemannian setting (even in the classical non-weighted setting, i.e. $N=n$):

\begin{thm}[Generalized Colesanti Inequality] \label{thm:Colesanti-intro}
Assume that $(M,g,\mu)$ satisfies the $\CD(0,N)$ condition ($1/N \in [-\infty,1/n]$) and that $\II_{\partial M} > 0$ ($M$ is strictly locally-convex).
Then the following inequality holds for any $f \in C^1(\partial M)$:
\begin{equation} \label{eq:Colesanti-intro}
 \int_{\partial M} H_\mu f^2 d\mu_{\partial M} - \frac{N-1}{N}\frac{\brac{\int _{\partial M} f d\mu_{\partial M}}^2}{\mu(M)} \leq  \int_{\partial M} \scalar{\II_{\partial M}^{-1} \;\nabla_{\partial M} f,\nabla_{\partial M} f} d\mu_{\partial M} ~,
\end{equation}
where $\nabla_{\partial M}$ denotes the induced Levi-Civita connection on the boundary. 
\end{thm}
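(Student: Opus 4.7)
The natural strategy is to reduce the inequality on $\partial M$ to an interior integral identity via a generalized Reilly formula applied to the solution of a suitable Neumann problem. First I would set $c := \frac{1}{\mu(M)} \int_{\partial M} f \, d\mu_{\partial M}$ and solve the boundary value problem
\[
L u = c \ \text{ in } M, \qquad u_\nu = f \ \text{ on } \partial M.
\]
The compatibility condition $\int_M L u \, d\mu = \int_{\partial M} u_\nu \, d\mu_{\partial M}$ is built in by the choice of $c$, and standard elliptic theory yields a $C^2$ solution (unique up to an additive constant, which is irrelevant since $u$ enters only through its derivatives).

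Next, I would invoke the weighted Reilly formula (the starting point of the companion paper \cite{KolesnikovEMilmanReillyPart1}) which, starting from the weighted Bochner identity and integrating by parts twice, gives
\[
\int_M \bigl[(Lu)^2 - \snorm{\nabla^2 u}^2 - \Ric_\mu(\nabla u, \nabla u)\bigr] d\mu = \int_{\partial M} \Bigl(H_\mu f^2 + 2 f \, L_{\partial M, \mu} u - 2\sscalar{\nabla_{\partial M} u_\nu, \nabla_{\partial M} u}_{\text{(absorbed)}} + \II(\nabla_{\partial M} u, \nabla_{\partial M} u)\Bigr) d\mu_{\partial M},
\]
where the tangential Laplacian term on the right is rewritten via integration by parts on the closed manifold $\partial M$ as $-2\int_{\partial M}\sscalar{\nabla_{\partial M} f, \nabla_{\partial M} u} d\mu_{\partial M}$. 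The computation of the boundary integrand uses the decomposition $\nabla u = u_\nu \nu + \nabla_{\partial M} u$, the identity $\Delta u = \Delta_{\partial M} u + H u_\nu + u_{\nu\nu}$, and the relation $H - V_\nu = H_\mu$.

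The third step is to exploit the $\CD(0,N)$ hypothesis via the refined Bochner inequality
\[
\snorm{\nabla^2 u}^2 + \Ric_\mu(\nabla u, \nabla u) \;\geq\; \snorm{\nabla^2 u}^2 + \frac{\sscalar{\nabla V, \nabla u}^2}{N-n} + \Ric_{\mu,N}(\nabla u, \nabla u) \;\geq\; \frac{(Lu)^2}{N},
\]
obtained by combining the Cauchy--Schwarz bound $\snorm{\nabla^2 u}^2 \geq (\Delta u)^2/n$ with the elementary inequality $a^2/n + b^2/(N-n) \geq (a+b)^2/N$ (with appropriate care for $1/N \in [-\infty, 1/n]$, where the signs of $N-n$ and $N$ may reverse in a compatible way). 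Since $Lu \equiv c$, this yields
\[
\int_M \bigl[(Lu)^2 - \snorm{\nabla^2 u}^2 - \Ric_\mu(\nabla u, \nabla u)\bigr] d\mu \;\leq\; \brac{1 - \tfrac{1}{N}} c^2 \mu(M) = \frac{N-1}{N}\frac{\brac{\int_{\partial M} f \, d\mu_{\partial M}}^2}{\mu(M)}.
\]

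Finally, I would absorb the unknown tangential gradient $\nabla_{\partial M} u$ by pointwise optimization. Since $\II_{\partial M} > 0$ is invertible, for every $y \in T_x \partial M$,
\[
2 \sscalar{\nabla_{\partial M} f, y} - \II(y,y) \;\leq\; \sscalar{\II^{-1} \nabla_{\partial M} f, \nabla_{\partial M} f},
\]
with equality at $y = \II^{-1} \nabla_{\partial M} f$. Applying this with $y = \nabla_{\partial M} u$ and integrating produces precisely the right-hand side of \eqref{eq:Colesanti-intro}, completing the argument. The main obstacles are (i) making sure the generalized Reilly identity and refined Bochner inequality remain valid and correctly signed throughout the extended range $1/N \in [-\infty, 1/n]$ including negative $N$, and (ii) confirming the required regularity of the Neumann problem so that all integrations by parts are justified; both are handled in the companion paper \cite{KolesnikovEMilmanReillyPart1}.
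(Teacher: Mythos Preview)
Your proposal is correct and follows essentially the same route as the paper: solve the Neumann problem $Lu \equiv c$, $u_\nu = f$, plug into the generalized Reilly formula, apply the $\CD(0,N)$ condition via the refined Bochner inequality $\Gamma_2(u) \geq \frac{1}{N}(Lu)^2$, and then eliminate the unknown tangential gradient $\nabla_{\partial M} u$ using the pointwise Cauchy--Schwarz inequality with respect to $\II_{\partial M}$. The only cosmetic difference is ordering: the paper first applies Cauchy--Schwarz inside the Reilly formula and only afterwards specializes to the Neumann solution, whereas you fix the Neumann solution first; your final optimization step $2\sscalar{\nabla_{\partial M} f, y} - \II(y,y) \leq \sscalar{\II^{-1}\nabla_{\partial M} f, \nabla_{\partial M} f}$ is exactly the paper's inequality $2\sscalar{\nabla_{\partial M} u_\nu, \nabla_{\partial M} u} \leq \sscalar{\II\,\nabla_{\partial M} u,\nabla_{\partial M} u} + \sscalar{\II^{-1}\nabla_{\partial M} u_\nu,\nabla_{\partial M} u_\nu}$ rearranged.
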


Theorem \ref{thm:Colesanti-intro} was obtained by A.~Colesanti in \cite{ColesantiPoincareInequality} with $N=n$ for a compact subset $M$ of \emph{Euclidean space} $\Real^n$ having a $C^2$ strictly convex boundary and endowed with the Lebesgue measure ($V=0$). Colesanti derived this inequality as an infinitesimal version of the (purely Euclidean) Brunn-Minkowski inequality, and so his method is naturally \emph{confined to the Euclidean setting}. In contrast, we derive Theorem \ref{thm:Colesanti-intro} in Section \ref{sec:Col} directly by an $L^2(\mu)$-duality argument coupled with a generalized version of the classical Reilly formula (derived in our companion work \cite{KolesnikovEMilmanReillyPart1} following Li and Du \cite{MaDuGeneralizedReilly}). In particular, this yields a new Riemannian proof of the Brunn--Minkowski inequality for convex domains in Euclidean space (as well as its generalization by Borell \cite{BorellConvexMeasures} and Brascamp--Lieb \cite{BrascampLiebPLandLambda1}, see Subsections \ref{subsec:intro-BM} and \ref{subsec:BBL}). 

\medskip

We also obtain a dual-version of Theorem \ref{thm:Colesanti-intro}, which in fact applies to mean-convex domains: 
\begin{thm}[Dual Generalized Colesanti Inequality] \label{thm:dual-Colesanti-intro}
Assume that $(M,g,\mu)$ satisfies the $\CD(\rho,0)$ condition, $\rho \in \Real$, and that $H_\mu > 0$ on $\partial M$ ($M$ is strictly generalized mean-convex). Then for any $f \in C^{2}(\partial M)$ and $C \in \Real$:
\[
\int_{\partial M} \scalar{\II_{\partial M}  \;\nabla_{\partial M} f,\nabla_{\partial M} f} d\mu_{\partial M}  \leq \int_{\partial M} \frac{1}{H_\mu} \Bigl(L_{\partial M} f + \frac{\rho (f-C) }{2} \Bigr)^2 d\mu_{\partial M}~,
\]
where $L_{\partial M} = L_{(\partial M, g|_{\partial M}, \mu_{\partial M})}$ denotes the weighted-Laplacian on the boundary. 
\end{thm}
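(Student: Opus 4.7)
The plan is to apply the generalized weighted Reilly formula from the companion work \cite{KolesnikovEMilmanReillyPart1} to a carefully chosen interior extension of $f$. Specifically, I would let $u \in C^2(M)$ solve the Dirichlet problem $L u = 0$ in $M$ with $u|_{\partial M} = f$ (standard elliptic theory supplies a solution, approximating $f \in C^2(\partial M)$ by smoother data and passing to the limit if needed). The insistence on $Lu = 0$ is dictated by the hypothesis $\CD(\rho,0)$, which is the weakest condition in the hierarchy: its refined Bochner inequality $\snorm{\nabla^2 u}^2 + \Ric_\mu(\nabla u,\nabla u) \geq \rho|\nabla u|^2 + (Lu)^2/N$ becomes meaningful in the limit $N \to 0$ exactly for $L$-harmonic functions.

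The weighted Reilly formula then reads
\[
\int_M (Lu)^2 d\mu = \int_M \brac{\snorm{\nabla^2 u}^2 + \Ric_\mu(\nabla u,\nabla u)} d\mu + \int_{\partial M} \brac{H_\mu u_\nu^2 + 2 u_\nu L_{\partial M} u + \II_{\partial M}(\nabla_{\partial M} u, \nabla_{\partial M} u)} d\mu_{\partial M},
\]
whose left-hand side vanishes. To estimate the interior integrand, I combine the trace Cauchy--Schwarz bound $\snorm{\nabla^2 u}^2 \geq (\Delta u)^2/n$ with the identity $\Delta u = \scalar{\nabla V, \nabla u}$ (which follows from $Lu = 0$) to obtain $\snorm{\nabla^2 u}^2 \geq \scalar{\nabla V, \nabla u}^2 / n$. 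The $\CD(\rho,0)$ hypothesis, written as $\Ric_\mu \geq \rho g - \frac{1}{n}\, dV \otimes dV$, gives $\Ric_\mu(\nabla u,\nabla u) \geq \rho |\nabla u|^2 - \scalar{\nabla V, \nabla u}^2 / n$. Adding these two estimates, the $\scalar{\nabla V, \nabla u}^2 / n$ contributions cancel exactly, producing the clean pointwise bound $\snorm{\nabla^2 u}^2 + \Ric_\mu(\nabla u,\nabla u) \geq \rho |\nabla u|^2$. Substituting into the Reilly identity yields
\[
\int_{\partial M} \brac{H_\mu u_\nu^2 + 2 u_\nu L_{\partial M} f + \II_{\partial M}(\nabla_{\partial M} f, \nabla_{\partial M} f)} d\mu_{\partial M} \leq -\rho \int_M |\nabla u|^2 d\mu.
\]

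To finish, Green's identity together with $Lu = 0$ gives $\int_M |\nabla u|^2 d\mu = \int_{\partial M} f u_\nu\, d\mu_{\partial M}$, and since moreover $\int_{\partial M} u_\nu d\mu_{\partial M} = \int_M L u\, d\mu = 0$, I may freely substitute $f \mapsto f - C$ for any constant $C \in \Real$. Inserting this, the boundary integrand in the inequality above becomes
\[
H_\mu u_\nu^2 + 2 u_\nu \brac{L_{\partial M} f + \tfrac{\rho(f-C)}{2}} + \II_{\partial M}(\nabla_{\partial M} f, \nabla_{\partial M} f),
\]
and completing the square in $u_\nu$ (permitted since $H_\mu > 0$) writes the first two summands as a non-negative perfect square minus $(L_{\partial M} f + \rho(f-C)/2)^2 / H_\mu$. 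Discarding the non-negative term produces the claimed inequality. No serious analytic obstacle arises; the one conceptual point worth emphasising is the miraculous cancellation between the trace Cauchy--Schwarz estimate and the $\CD(\rho,0)$ bound when $Lu = 0$, which is precisely what lets the weakest condition in the hierarchy still drive the argument.
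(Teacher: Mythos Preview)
Your proof is correct and follows essentially the same route as the paper: solve the Dirichlet problem $Lu=0$ with $u|_{\partial M}=f$, apply the generalized Reilly formula, use integration by parts to convert $\rho\int_M|\nabla u|^2 d\mu$ into $\rho\int_{\partial M}(f-C)u_\nu\,d\mu_{\partial M}$, and complete the square in $u_\nu$. The only difference is cosmetic: where the paper invokes Lemma~\ref{lem:CS} (with $N=0$ and the convention $-\infty\cdot 0=0$) as a black box to obtain $\Gamma_2(u)\geq\rho|\nabla u|^2$ for $L$-harmonic $u$, you unpack that lemma by hand via the trace Cauchy--Schwarz inequality and the explicit form $\Ric_\mu\geq\rho g-\tfrac{1}{n}\,dV\otimes dV$ of the $\CD(\rho,0)$ condition, observing the cancellation of the $\tfrac{1}{n}\scalar{\nabla V,\nabla u}^2$ terms.
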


By specializing to the constant function $f \equiv 1$, various mean-curvature inequalities for convex and mean-convex boundaries of $\CD(0,N)$ weighted-manifolds are obtained in Section \ref{sec:Col-App}, immediately recovering (when $N \in [n,\infty]$) and extending (when $N \leq 0$) recent results of Huang--Ruan \cite{HuangRuanMeanCurvatureEstimates}. Under various combinations of non-negative lower bounds on $H_\mu$, $\II_{\partial M}$ and $\rho$, spectral-gap estimates on convex boundaries of $\CD(\rho,0)$ weighted-manifolds are deduced in Sections \ref{sec:Col-App} and \ref{sec:boundaries}. For instance, we show:

\begin{thm} \label{thm:IIHRho-Poincare-intro}
Assume that $(M,g,\mu)$ satisfies $\CD(\rho,0)$, $\rho \geq 0$, 
and that $\II_{\partial M} \ge \sigma g|_{\partial M}$, $H_\mu \ge \xi$ on $\partial M$ with $\sigma,\xi > 0$. Then for all $f \in C^1(\partial M)$ with $\int_{\partial M} f d\mu_{\partial M} = 0$: 
\[\lambda_1 \int_{\partial M} f^2 d\mu_{\partial M} \leq \int_{\partial M} |\nabla_{\partial M} f|^2 d \mu_{\partial M} ~,
\]where:
\[
\lambda_1 \geq \frac{\rho + a + \sqrt{2 a \rho + a^2}}{2} \geq \max\brac{a,\frac{\rho}{2}}  ~,~ a := \sigma \xi ~.
\]
\end{thm}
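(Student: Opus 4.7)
Let $f$ be a first nontrivial eigenfunction of the weighted-Laplacian $L_{\partial M}$ on the closed weighted manifold $(\partial M, g|_{\partial M}, \mu_{\partial M})$, so that $L_{\partial M} f = -\lambda_1 f$ and (automatically) $\int_{\partial M} f \, d\mu_{\partial M} = 0$. The plan is to apply both Theorem~\ref{thm:Colesanti-intro} and Theorem~\ref{thm:dual-Colesanti-intro} to $f$ and combine the resulting bounds on $\lambda_1$.

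Since $\rho \ge 0$ implies $\CD(\rho,0) \Rightarrow \CD(0,0)$, Theorem~\ref{thm:Colesanti-intro} applies with $N = 0$; the mean-correction term vanishes because $\int f \, d\mu_{\partial M} = 0$, leaving $\int H_\mu f^2 \, d\mu_{\partial M} \le \int \sscalar{\II_{\partial M}^{-1} \nabla_{\partial M} f, \nabla_{\partial M} f} \, d\mu_{\partial M}$. Inserting $H_\mu \ge \xi$, $\II_{\partial M}^{-1} \le \sigma^{-1} g|_{\partial M}$, and $\int |\nabla_{\partial M} f|^2 \, d\mu_{\partial M} = \lambda_1 \int f^2 \, d\mu_{\partial M}$, I obtain the baseline estimate $\lambda_1 \ge a$. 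Theorem~\ref{thm:dual-Colesanti-intro} with $C = 0$ then gives, using $L_{\partial M} f + \rho f/2 = (\rho/2 - \lambda_1) f$ and the bounds $\II_{\partial M} \ge \sigma g|_{\partial M}$, $1/H_\mu \le 1/\xi$,
\[
\sigma \lambda_1 \int_{\partial M} f^2 \, d\mu_{\partial M} \le \int_{\partial M} \sscalar{\II_{\partial M} \nabla_{\partial M} f, \nabla_{\partial M} f} \, d\mu_{\partial M} \le \xi^{-1} (\lambda_1 - \rho/2)^2 \int_{\partial M} f^2 \, d\mu_{\partial M},
\]
yielding the quadratic inequality $\lambda_1^2 - (a + \rho)\lambda_1 + \rho^2/4 \ge 0$, whose roots are precisely $\lambda_{\pm} = \frac{a + \rho \pm \sqrt{a^2 + 2a\rho}}{2}$.

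Thus either $\lambda_1 \ge \lambda_+$ (the target) or $\lambda_1 \le \lambda_-$, and excluding the lower branch is the main obstacle. Since $\lambda_- \le \rho/2 \le \lambda_+$, it is enough to establish $\lambda_1 \ge \rho/2$. The baseline $\lambda_1 \ge a$ already forces this whenever $a \ge \lambda_-$, which one checks is equivalent to $\rho \le 4a$; in that regime the argument is complete. In the complementary regime $\rho > 4a$ one must produce $\lambda_1 \ge \rho/2$ independently. The natural route is a Bakry--\'Emery spectral gap on $(\partial M, g|_{\partial M}, \mu_{\partial M})$: passing from $\CD(\rho,0) \Rightarrow \CD(\rho,\infty)$ on $M$ through the Gauss-type identity for the induced weighted Ricci tensor of the hypersurface, the convexity datum $\II_{\partial M} \ge \sigma g$ together with $H_\mu \ge \xi$ should suffice to transfer the curvature lower bound to the boundary, whence the weighted Lichnerowicz argument delivers the required gap. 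Once $\lambda_1 \ge \rho/2$ is secured the quadratic collapses to $\lambda_1 \ge \lambda_+$, and the elementary verification $\lambda_+ \ge \max(a, \rho/2)$ concludes.
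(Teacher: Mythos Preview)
Your argument is correct up through the quadratic inequality and the case $\rho \le 4a$, but the proposed treatment of the complementary case $\rho > 4a$ has a real gap. Transferring a curvature-dimension lower bound from $M$ to $\partial M$ via the Gauss equation produces the term $-R^M_g(\cdot,\nu,\cdot,\nu)$ in the boundary weighted Ricci tensor (see the paper's Lemma~\ref{lem:boundary-Ric}); controlling this requires an upper bound on the \emph{sectional} curvatures of $M$ in the normal direction, which is simply not available under $\CD(\rho,0)$. So the ``Bakry--\'Emery on the boundary'' route does not go through without extra hypotheses, and you cannot conclude $\lambda_1 \ge \rho/2$ that way.

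The paper closes this gap with a much cheaper device that you almost have in hand: since $\CD(\rho,0) \Rightarrow \CD(\rho_1,0)$ for every $\rho_1 \in [0,\rho]$, the dual Colesanti inequality may be applied with \emph{any} such $\rho_1$, yielding the entire family
\[
\lambda_1^2 - (\rho_1 + a)\lambda_1 + \tfrac{\rho_1^2}{4} \ge 0, \qquad \rho_1 \in [0,\rho].
\]
At $\rho_1 = 0$ this forces $\lambda_1 \ge a = \lambda_+(0)$, so $\lambda_1$ lies on the upper branch. As $\rho_1$ increases continuously, $\lambda_1$ (a fixed number) cannot cross from the region $[\lambda_+(\rho_1),\infty)$ to the region $(-\infty,\lambda_-(\rho_1)]$ without entering the forbidden interval $(\lambda_-(\rho_1),\lambda_+(\rho_1))$, since $\lambda_\pm(\rho_1)$ vary continuously and never coincide. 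Hence $\lambda_1 \ge \lambda_+(\rho_1)$ for all $\rho_1 \in [0,\rho]$, in particular $\lambda_1 \ge \lambda_+(\rho)$. This replaces both your separate invocation of Theorem~\ref{thm:Colesanti-intro} (which is just the case $\rho_1=0$ of the dual inequality here) and the problematic boundary-curvature step.
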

\noindent
This extends and refines the estimate $\lambda_1 \geq (n-1) \sigma^2$ of Xia \cite{XiaSpectralGapOnConvexBoundary} in the classical non-weighted Riemannian setting ($V \equiv 0$) when $\Ric_g \geq 0$ ($\rho=0$), since in that case $\xi \geq (n-1) \sigma$. 
Other spectral-gap and log-Sobolev estimates are obtained in Section \ref{sec:boundaries}, by taking note that the boundary $(\partial M,g|_{\partial M}, \mu_{\partial M})$ satisfies the $\CD(\rho_0,N-1)$ condition for an appropriate $\rho_0$. For instance, Theorem \ref{thm:Colesanti-intro} yields the following estimate (see Theorem \ref{thm:Col-On-Boundary} and Remark \ref{rem:Col-On-Boundary}):

\begin{thm} \label{thm:intro-Col-On-Boundary}
Let $(M^n,g,\mu)$ satisfy $\CD(0,0)$, $(\partial M,g|_{\partial M},\mu_{\partial M})$ satisfy $\CD(0,\infty)$, and assume that:
\[
\II_{\partial M} \geq \sigma g|_{\partial M} ~,~ H_\mu \geq \xi ~,
\]
for some positive measurable functions $\sigma,\xi : \partial M \rightarrow \Real_+$. Then for all $f \in C^1(\partial M)$ with $\int_{\partial M} f d\mu_{\partial M} = 0$:  
\[
\int_{\partial M} f^2 d\mu_{\partial M}\leq C \brac{\dashint_{\partial M} \frac{1}{\xi} d\mu_{\partial M} \dashint_{\partial M} \frac{1}{\sigma} d\mu_{\partial M}} \int_{\partial M} \abs{\nabla_{\partial M} f}^2 d\mu_{\partial M} ~,
\]
where $C>1$ is some universal (dimension-independent) numeric constant.
\end{thm}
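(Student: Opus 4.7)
The plan is to apply Theorem~\ref{thm:Colesanti-intro} to obtain a weighted Poincar\'e inequality on the boundary, extract from it a first-moment concentration estimate, and then invoke E.~Milman's equivalence of concentration and spectral-gap on $\CD(0,\infty)$ weighted-manifolds.

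First, I would apply Theorem~\ref{thm:Colesanti-intro} with $N=0$, which is permitted by the $\CD(0,0)$ hypothesis on $(M,g,\mu)$. For $f \in C^1(\partial M)$ with $\int_{\partial M} f\, d\mu_{\partial M} = 0$, the term $\frac{N-1}{N} \frac{(\int f d\mu_{\partial M})^2}{\mu(M)}$ must be read as $0$ (the natural convention when $1/N = -\infty$ is paired with a vanishing numerator), so the inequality collapses to
\[
\int_{\partial M} H_\mu\, f^2\, d\mu_{\partial M} \;\leq\; \int_{\partial M} \scalar{\II_{\partial M}^{-1} \nabla_{\partial M} f,\nabla_{\partial M} f}\, d\mu_{\partial M}.
\]
Invoking the pointwise bounds $H_\mu \geq \xi$ and $\II_{\partial M}^{-1} \leq \sigma^{-1} g|_{\partial M}$, this reduces to the weighted Poincar\'e inequality
\begin{equation*}
\int_{\partial M} \xi f^2\, d\mu_{\partial M} \;\leq\; \int_{\partial M} \frac{|\nabla_{\partial M} f|^2}{\sigma}\, d\mu_{\partial M}, \qquad (\star)
\end{equation*}
valid for every $f$ of zero $\mu_{\partial M}$-mean.

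Next, I would extract from $(\star)$ a first-moment concentration estimate on $(\partial M, g|_{\partial M}, \mu_{\partial M})$. Apply $(\star)$ to $f = \varphi - \bar\varphi$, where $\varphi: \partial M \to \Real$ is an arbitrary $1$-Lipschitz function and $\bar\varphi := \dashint_{\partial M} \varphi\, d\mu_{\partial M}$. Since $|\nabla_{\partial M}\varphi| \leq 1$, we obtain $\int_{\partial M}\xi(\varphi-\bar\varphi)^2\, d\mu_{\partial M} \leq \int_{\partial M} 1/\sigma\, d\mu_{\partial M}$. The Cauchy--Schwarz inequality (splitting $1 = \sqrt\xi \cdot 1/\sqrt\xi$) then yields
\[
\int_{\partial M} |\varphi - \bar\varphi|\, d\mu_{\partial M} \;\leq\; \sqrt{\int_{\partial M} \xi(\varphi-\bar\varphi)^2\, d\mu_{\partial M}} \cdot \sqrt{\int_{\partial M} \tfrac{1}{\xi}\, d\mu_{\partial M}},
\]
and dividing by $\mu_{\partial M}(\partial M)$ gives
\[
\dashint_{\partial M} |\varphi - \bar\varphi|\, d\mu_{\partial M} \;\leq\; \sqrt{\dashint_{\partial M} \tfrac{1}{\xi}\, d\mu_{\partial M}\cdot \dashint_{\partial M} \tfrac{1}{\sigma}\, d\mu_{\partial M}} \;=:\; D.
\]

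Finally, I would invoke E.~Milman's equivalence theorem for $\CD(0,\infty)$ weighted-manifolds: on such a space, a first-moment concentration estimate with rate $D$ for $1$-Lipschitz functions implies a Poincar\'e inequality with constant $C D^2$, where $C>1$ is a universal numeric constant. Since $(\partial M, g|_{\partial M}, \mu_{\partial M})$ satisfies $\CD(0,\infty)$ by hypothesis, applying the equivalence directly delivers the claimed estimate. The main obstacle is twofold: (a) justifying the $N=0$ case of Theorem~\ref{thm:Colesanti-intro} to obtain $(\star)$, which in the degenerate limit requires either a direct inspection of the proof or a limiting argument as $N \uparrow 0$ (here the fact that $\mu(M)>0$ and $\int f\, d\mu_{\partial M}=0$ rescues the divergent coefficient); and (b) invoking E.~Milman's concentration-to-Poincar\'e equivalence in a form that yields a genuinely dimension- and geometry-free constant $C$, so that no implicit dependence on $\partial M$ sneaks into the final estimate.
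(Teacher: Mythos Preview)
Your proposal is correct and follows essentially the same approach as the paper's proof (given as Theorem~\ref{thm:Col-On-Boundary} and Remark~\ref{rem:Col-On-Boundary}): apply the generalized Colesanti inequality under $\CD(0,0)$ to zero-mean $1$-Lipschitz functions, combine with Cauchy--Schwarz in the form $1 = \sqrt{\xi}\cdot 1/\sqrt{\xi}$ and the pointwise bounds $H_\mu \geq \xi$, $\II_{\partial M}^{-1} \leq \sigma^{-1} g|_{\partial M}$ to get the first-moment estimate, and then invoke E.~Milman's concentration-to-Poincar\'e equivalence on the $\CD(0,\infty)$ boundary. The only cosmetic difference is the order in which you apply Cauchy--Schwarz versus the pointwise bounds; both obstacles you flag are indeed the ones the paper addresses (the $-\infty \cdot 0 = 0$ convention, and the universality of the constant in \cite{EMilman-RoleOfConvexity}).
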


\subsection{Connections to the Brunn--Minkowski Theory} \label{subsec:intro-BM}

Recall that the classical Brunn--Minkowski inequality in Euclidean space \cite{Schneider-Book,GardnerSurveyInBAMS} asserts that:
\begin{equation} \label{eq:BM-intro}
\vol((1-t) K + t L)^{1/n} \geq (1-t) \vol(K)^{1/n} + t \vol(L)^{1/n} ~,~ \forall t \in [0,1] ~,
\end{equation}
for all convex $K,L \subset \Real^n$; it was extended to arbitrary Borel sets by Lyusternik. Here $\vol$ denotes Lebesgue measure and 
$A + B := \set{a + b \; ; \; a \in A , b \in B}$ denotes Minkowski addition. We refer to the excellent survey by R. Gardner \cite{GardnerSurveyInBAMS} for additional details and references. 

For convex sets, (\ref{eq:BM-intro}) is equivalent to the concavity of the function $t \mapsto \vol(K + t L)^{1/n}$. By Minkowski's theorem, extending Steiner's observation for the case that $L$ is the Euclidean ball, $\vol(K+t L)$ is an $n$-degree polynomial $\sum_{i=0}^n {n \choose i} W_{n-i}(K,L) t^i$, whose coefficients
\begin{equation} \label{eq:W-def}
 W_{n-i}(K,L) := \frac{(n-i)!}{n!} \brac{\frac{d}{dt}}^{i} \vol(K + t L)|_{t=0} ~,
\end{equation}
are called mixed-volumes. The above concavity thus amounts to the following ``Minkowski's second inequality", which is a particular case of the Alexandrov--Fenchel inequalities:
\begin{equation} \label{eq:Mink-II}
W_{n-1}(K,L)^2 \geq W_{n-2}(K,L) W_n(K,L) = W_{n-2}(K,L)  \vol(K) ~.
\end{equation}
It was shown by Colesanti \cite{ColesantiPoincareInequality} that (\ref{eq:Mink-II}) is equivalent to (\ref{eq:Colesanti-intro}) in the Euclidean setting. In fact, a Poincar\'e-type inequality on the sphere, which is a reformulation of (\ref{eq:Colesanti-intro}) obtained via the Gauss-map, was established already by Hilbert (see \cite{BusemannConvexSurfacesBook,HormanderNotionsOfConvexityBook}) in his proof of (\ref{eq:Mink-II}) and thus the Brunn--Minkowski inequality for convex sets. Going in the other direction, the Brunn--Minkowski inequality was used by Colesanti to establish (\ref{eq:Colesanti-intro}). 
See e.g. \cite{BrascampLiebPLandLambda1,BobkovLedoux, Ledoux-Book} for further related connections. 

In view of our generalization of (\ref{eq:Colesanti-intro}) to the weighted-Riemannian setting, it is all but natural to wonder whether there is a Riemannian Brunn--Minkowski theory lurking in the background. Note that when $L$ is the Euclidean unit-ball $D$, then $K+ t D$ coincides with $K_t := \set{ x \in \Real^n \; ; \; d(x,K) \leq t }$, where $d$ is the Euclidean distance. The corresponding distinguished mixed-volumes $W_{n-i}(K) = W_{n-i}(K,D)$, which are called intrinsic-volumes or quermassintegrals, are obtained (up to normalization factors) as the $i$-th variation of $t \mapsto \vol(K_t)$.
Analogously, we may define $K_t$ on a general Riemannian manifold with $d$ denoting the geodesic distance, and given $1/N \in (-\infty,1/n]$, define the following \emph{generalized} quermassintegrals of $K$ as the $i$-th variations of $t \mapsto \mu(K_t)$, $i=0,1,2$ (up to normalization):
\[
W_N(K) := \mu(K) ~,~ W_{N-1}(K) := \frac{1}{N} \int_{\partial K} d\mu_{\partial K}  ~,~ W_{N-2}(K) := \frac{1}{N(N-1)} \int_{\partial K} H_\mu d\mu_{\partial K} ~.
\]
Applying (\ref{eq:Colesanti-intro}) to the constant function $f \equiv 1$, we obtain in Section \ref{sec:BM} the following interpretation of the resulting inequality:

\begin{cor}{(Riemannian Geodesic Brunn-Minkowski for Convex Sets)}
Let $K$ denote a compact subset of $(M^n,g)$ having $C^2$ smooth and strictly-convex boundary ($\II_{\partial K} > 0$), which is bounded away from $\partial M$. Assume that $(K,g|_K,\mu|_K)$ satisfies the $\CD(0,N)$ condition ($1/N \in (-\infty,1/n]$). Then the following generalized Minkowski's second inequality for geodesic extensions holds:
\[
W_{N-1}(K)^2 \geq W_{N}(K) W_{N-2}(K) ~.
\]
Equivalently, $(d/dt)^2 N \mu(K_t)^{1/N} |_{t=0} \leq 0$ where $K_t := \set{ x \in M \; ; \; d(x,K) \leq t }$.

\noindent
In other words, if $(M,g,\mu)$ satisfies $\CD(0,N)$, then the function:
\[
t \mapsto N \mu(K_t)^{1/N}
\]
is concave on any interval $[0,T]$ so that for all $t \in [0,T)$, $K_t$ remains $C^2$ smooth, strictly locally-convex,
and bounded away from $\partial M$.
\end{cor}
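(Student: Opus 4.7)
The plan is to evaluate Theorem \ref{thm:Colesanti-intro} on the constant function $f \equiv 1$, applied with $K$ playing the role of $M$. Since $\nabla_{\partial K} f \equiv 0$, the gradient term on the right-hand side of (\ref{eq:Colesanti-intro}) vanishes, leaving the scalar inequality
\[
\int_{\partial K} H_\mu \, d\mu_{\partial K} \;\leq\; \frac{N-1}{N}\,\frac{\brac{\int_{\partial K} d\mu_{\partial K}}^2}{\mu(K)}.
\]
Substituting the definitions $W_N(K) = \mu(K)$, $\int_{\partial K} d\mu_{\partial K} = N\,W_{N-1}(K)$, and $\int_{\partial K} H_\mu \, d\mu_{\partial K} = N(N-1)\,W_{N-2}(K)$, and then clearing the common positive factor $N(N-1)$ (with the degenerate boundary cases $N \in \{0,1,\infty\}$ handled by continuity), immediately yields the generalized Minkowski-type inequality $W_{N-1}(K)^2 \geq W_N(K)\,W_{N-2}(K)$.

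To pass from this to the second-derivative statement, I would set $\phi(t) := \mu(K_t)$ and appeal to the weighted first and second variation formulas along the normal-exponential flow $x \mapsto \exp_x(t\nu)$. By strict convexity of $\partial K$ together with the assumption that $\partial K$ is bounded away from $\partial M$, this flow is a diffeomorphism onto a one-sided tubular neighbourhood of $\partial K$ for small $t > 0$, and differentiation yields $\phi'(0) = \int_{\partial K} d\mu_{\partial K}$ and $\phi''(0) = \int_{\partial K} H_\mu \, d\mu_{\partial K}$ (the latter expressing $H_\mu$ precisely as the first variation of the weighted boundary measure $\mu_{\partial K}$ under the normal flow, as recalled in the paper's setup). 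A direct chain-rule computation then gives
\[
\frac{d^2}{dt^2}\bigg|_{t=0} N\phi(t)^{1/N} \;=\; \phi(0)^{1/N - 2}\brac{\phi(0)\phi''(0) - \tfrac{N-1}{N}\phi'(0)^2},
\]
and since $\phi(0) > 0$ the condition $\leq 0$ is exactly $\phi(0)\phi''(0) \leq \frac{N-1}{N}\phi'(0)^2$, which is precisely the Minkowski-type inequality just established.

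For the global concavity statement, I would apply the same pointwise argument at each $t_0 \in [0,T)$. The hypothesis that $K_{t_0}$ remains $C^2$ smooth, strictly locally-convex, and bounded away from $\partial M$ ensures that $(K_{t_0}, g|_{K_{t_0}}, \mu|_{K_{t_0}})$ inherits the $\CD(0,N)$ condition from the ambient space and satisfies $\II_{\partial K_{t_0}} > 0$, so Theorem \ref{thm:Colesanti-intro} applies to $K_{t_0}$ and yields $(d^2/dt^2)\,N\mu(K_t)^{1/N}|_{t=t_0} \leq 0$. Continuity of this second derivative in $t$ then delivers concavity on all of $[0,T]$. I expect the main technical point, rather than a genuine obstacle, to be the careful justification of the second-variation identity for $\phi''$ in the weighted setting; this however is entirely dictated by the definition and variational role of $H_\mu$ recorded at the outset of the paper.
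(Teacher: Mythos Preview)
Your proposal is correct and follows essentially the same route as the paper: apply the generalized Colesanti inequality with $f\equiv 1$ (which the paper records as Corollary \ref{cor:HuangRuan1}), rewrite in terms of the $W_{N-i}$, and then use the chain-rule identity $(d/dt)^2 N\mu(K_t)^{1/N}|_{t=0} = \mu(K)^{1/N-2}\bigl(\delta_2\delta_0 - \tfrac{N-1}{N}\delta_1^2\bigr)$ to pass to the second-derivative and global concavity statements. Your only superfluous remark is the handling of $N\in\{0,1\}$ by continuity, since these values are excluded by the hypothesis $1/N\in(-\infty,1/n]$ with $n\geq 2$; the case $N=\infty$ is already covered by the paper's standing convention that $N\mu^{1/N}$ is interpreted as $\log\mu$.
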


A much greater challenge is to find an extension of the Minkowski sum $K + t L$ for a general convex $L$ \emph{beyond the linear setting}. Observe that due to lack of homogeneity, this is \emph{not} the same as extending the operation of Minkowski interpolation $(1-t)K + t L$, a trivial task on any geodesic metric space by using geodesic interpolation. Motivated by the equivalence between (\ref{eq:Colesanti-intro}) and (\ref{eq:Mink-II}) which should persist in the weighted-Riemannian setting, we propose in Section \ref{sec:BM} a Riemannian generalization of $K + t L$ based on a novel geometric flow. Given a strictly locally-convex $K \subset (M,g)$ and $\varphi  \in C^2(\partial K)$, this flow produces a set denoted by $K_t = K_{\varphi,t} := K + t \varphi$, by evolving its boundary $\partial K_t = F_t(\partial K)$ along the map 
$F_t : \partial K \mapsto M$ solving:
\begin{eqnarray*}
\frac{d}{dt} F_t(y)  &=& \omega_t(F_t(y)) ~,~ F_0 = Id~,~ y \in \partial K ~,~ t \in [0,T] ~, \\
 \omega_t &:=& \varphi_t \nu_{\partial K_t} + \tau_t ~ \text{ on $\partial K_t$} ~,~ \tau_t := \II_{\partial K_t}^{-1} \nabla_{\partial K_t} \varphi_t  ~,~ \varphi_t := \varphi \circ F_t^{-1} ~.
\end{eqnarray*}
Indeed, we show that this operation coincides in the Euclidean setting with the usual Minkowski sum $K + t L$, when $\varphi$ is chosen to be the support function of $L$ composed with the Gauss map on $\partial K$. We dub this flow the 
``Parallel Normal Flow", since it is precisely characterized by having parallel normals (of spatially varying normal velocity $\varphi$) to the evolving hypersurface along the trajectory. Some additional interesting connections to an appropriate homogeneous Monge-Amp\`ere equation are discussed in Section \ref{sec:BM}. We do not go here into justifications for the existence of such a flow on an interval $[0,T]$
(except when all of the data is analytic, in which case the short-time existence is easy to justify), 
but rather observe the following new:
\begin{thm}[Riemannian Brunn--Minkowski Inequality] \label{thm:intro-Full-BM}
Let $K$ denote a compact subset of $(M^n,g)$ having $C^2$ smooth and strictly-convex boundary ($\II_{\partial K} > 0$), which is bounded away from $\partial M$. 
Let $\varphi \in C^2(\partial K)$, and let $K_t$ denote the Riemannian Minkowski extension $K_t := K + t \varphi$, assuming it is well-posed for all $t \in [0,T]$. Assume that $(M,g,\mu)$ satisfies the $\CD(0,N)$ condition ($1/N \in (-\infty,1/n]$). Then the function:
\[
t \mapsto N \mu(K_t)^{1/N} 
\]
is concave on $[0,T]$. 
\end{thm}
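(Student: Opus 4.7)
Plan: I would reduce the concavity of $t \mapsto N\mu(K_t)^{1/N}$ to the differential inequality
\[
\mu(K_t) \cdot \frac{d^2}{dt^2}\mu(K_t) \;\leq\; \frac{N-1}{N}\Bigl(\frac{d}{dt}\mu(K_t)\Bigr)^2 \qquad \forall t \in [0,T],
\]
(interpreted as concavity of $\log \mu(K_t)$ when $N=\infty$), and then derive it from Theorem~\ref{thm:Colesanti-intro} applied on $(K_t, g|_{K_t}, \mu|_{K_t})$ with test function $f = \varphi_t$.

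The first variation is straightforward: since only the normal component $\varphi_t$ of $\omega_t$ changes the enclosed $\mu$-volume (tangential motion merely reparametrizes $\partial K_t$), one has
\[
\mu'(t) := \frac{d}{dt}\mu(K_t) = \int_{\partial K_t} \varphi_t \, d\mu_{\partial K_t}.
\]

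The heart of the argument is the second variation. Using the evolving-surface formula with the flow $F_t$, for any time-dependent function $h_t$ transported along $\partial K_t$,
\[
\frac{d}{dt}\int_{\partial K_t} h_t \, d\mu_{\partial K_t} = \int_{\partial K_t} \Bigl(\frac{D h_t}{Dt} + h_t \varphi_t H_\mu\Bigr) d\mu_{\partial K_t} - \int_{\partial K_t} \scalar{\nabla_{\partial K_t} h_t, \tau_t} \, d\mu_{\partial K_t},
\]
where $D/Dt$ denotes the material derivative along $F_t$ and the last term arises by integrating the weighted tangential divergence of $\tau_t$ by parts on the closed boundary $\partial K_t$. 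I would apply this with $h_t = \varphi_t$: the defining relation $\varphi_t \circ F_t = \varphi$ of the Parallel Normal Flow forces $\varphi_t$ to be constant along flow trajectories, so $D\varphi_t/Dt \equiv 0$. Inserting the prescribed $\tau_t = \II_{\partial K_t}^{-1} \nabla_{\partial K_t} \varphi_t$ and using symmetry of $\II^{-1}$ then yields
\[
\mu''(t) = \int_{\partial K_t} H_\mu \varphi_t^2 \, d\mu_{\partial K_t} - \int_{\partial K_t} \scalar{\II_{\partial K_t}^{-1} \nabla_{\partial K_t}\varphi_t, \nabla_{\partial K_t}\varphi_t} \, d\mu_{\partial K_t}.
\]

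To conclude, note that $(K_t, g|_{K_t}, \mu|_{K_t})$ inherits $\CD(0,N)$ from $(M,g,\mu)$ (since the defining tensor inequality is pointwise) and that by the well-posedness assumption $\partial K_t$ remains $C^2$ and strictly locally convex; since $K$ is bounded away from $\partial M$, for small times $\partial K_t$ is the full boundary of $K_t$. Hence Theorem~\ref{thm:Colesanti-intro} applied to $f = \varphi_t$ gives
\[
\int_{\partial K_t} H_\mu \varphi_t^2 \, d\mu_{\partial K_t} - \int_{\partial K_t} \scalar{\II^{-1}\nabla \varphi_t, \nabla \varphi_t} \, d\mu_{\partial K_t} \leq \frac{N-1}{N}\frac{\Bigl(\int_{\partial K_t} \varphi_t \, d\mu_{\partial K_t}\Bigr)^2}{\mu(K_t)}.
\]
The left side is $\mu''(t)$ by the second-variation computation and the numerator on the right is $\mu'(t)^2$ by the first-variation formula, which is exactly the target differential inequality. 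The main obstacle is the second-variation computation: the very specific tangential velocity $\tau_t = \II^{-1}\nabla\varphi_t$ built into the Parallel Normal Flow is \emph{precisely} what (i) makes the material derivative of $\varphi_t$ vanish along the flow and (ii) produces the quadratic form $\scalar{\II^{-1}\nabla\varphi_t, \nabla\varphi_t}$ matching the right-hand side of Colesanti's inequality; once this cancellation is set up and the variation formulas are carefully justified (especially in the ranges $N \leq 0$ and $N = \infty$, where the formal manipulations need to be checked), the rest is routine assembly.
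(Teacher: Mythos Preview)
Your proposal is correct and follows essentially the same route as the paper: compute the first variation $\mu'(t)=\int_{\partial K_t}\varphi_t\,d\mu_{\partial K_t}$, compute the second variation and use the built-in constancy of $\varphi$ along trajectories together with $\tau_t=\II_{\partial K_t}^{-1}\nabla_{\partial K_t}\varphi_t$ to obtain $\mu''(t)=\int H_\mu\varphi_t^2 - \int \scalar{\II^{-1}\nabla\varphi_t,\nabla\varphi_t}$, and then apply the generalized Colesanti inequality to close the differential inequality. The only cosmetic difference is that the paper carries out the second variation via an explicit Jacobian computation ($\frac{d}{dt}\Jac F_t=\div_{\Sigma_t}\omega_t$) rather than invoking an evolving-surface/material-derivative formula, but this amounts to the same calculation.
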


It turns out that the latter concavity is equivalent to our generalized Colesanti inequality (\ref{eq:Colesanti-intro}). In view of the remarks above, Theorem \ref{thm:intro-Full-BM} is interpreted as a version of the Brunn--Minkowski inequality in the weighted Riemannian setting. Furthermore, this leads to a natural way of defining the mixed-volumes of $K$ and $\varphi$ in this setting, namely as variations of $t \mapsto \mu(K + t \varphi)$. Yet another natural flow producing the aforementioned concavity is also suggested in Section \ref{sec:BM}; however, this flow does not seem to produce Minkowski summation in the Euclidean setting. See Remark \ref{rem:other-gen-BM} for a comparison with other known extensions of the Brunn--Minkowski inequality to the metric-measure space setting, which are very different from the one above, as they are all based on the obvious extension of $(1-t) K + t L$ via geodesic interpolation.

\medskip

To conclude this work, we provide in Section \ref{sec:Apps} some further applications of our results to the study of isoperimetric inequalities on weighted Riemannian manifolds. Additional applications will be developed in a subsequent work. 

\medskip \noindent
\textbf{Acknowledgements.} We thank Franck Barthe, Bo Berndtsson, Andrea Colesanti, Dario Cordero-Erausquin, Bo'az Klartag, Michel Ledoux, Frank Morgan, Van Hoang Nguyen, Shin-ichi Ohta, Yehuda Pinchover and Steve Zelditch for their comments and interest.

\section{Preliminaries} \label{sec:prelim}

\subsection{Notation}

We denote by $int(M)$ the interior of $M$. Given a compact differentiable manifold $\Sigma$ (which is at least $C^k$ smooth), we denote by $C^{k}(\Sigma)$ the space of real-valued functions on $\Sigma$ with continuous derivatives $\brac{\frac{\partial}{\partial x}}^a f$, for every multi-index $a$ of order $|a| \leq k$ in a given coordinate system. Similarly, the space $C^{k,\alpha}(\Sigma)$ denotes the subspace of functions whose $k$-th order derivatives are H\"{o}lder continuous of order $\alpha$ on the $C^{k,\alpha}$ smooth manifold $\Sigma$. When $\Sigma$ is non-compact, we may use $C_{loc}^{k,\alpha}(\Sigma)$ to denote the class of functions $u$ on $M$ so that $u|_{\Sigma_0} \in C^{k,\alpha}(\Sigma_0)$ for all compact subsets $\Sigma_0 \subset \Sigma$. These spaces are equipped with their usual corresponding topologies. 

Throughout this work, when integrating by parts, we employ a slightly more general version of the textbook Stokes Theorem $\int_M d\omega = \int_{\partial M} \omega$,  in which one only assumes that $\omega$ is a continuous differential $(n-1)$-form on $M$ which is differentiable on $int(M)$ (and so that $d\omega$ is integrable there); a justification may be found in \cite{MacdonaldGeneralizedStokes}. This permits us to work with the classes $C^k_{loc}(int(M))$ occurring throughout this work. 

Given a finite measure $\nu$ on a measurable space $\Omega$, and a $\nu$-integrable function $f$ on $\Omega$, we denote:
\[
\dashint_\Omega f d\nu := \frac{1}{\nu(\Omega)} \int_\Omega f d\nu ~,~ \Var_\nu(f) := \int_{\Omega} \brac{f - \dashint_\Omega f d\nu}^2 d\nu ~. 
\]

Throughout this work we employ Einstein summation convention.
By abuse of notation, we denote different covariant and contravariant versions of a tensor in the same manner. So for instance, $\Ric_\mu$ may denote the $2$-covariant tensor $(\Ric_\mu)_{\alpha,\beta}$, but also may denote its $1$-covariant $1$-contravariant version $(\Ric_\mu)^{\alpha}_{\beta}$, as in:
\[
 \scalar{\Ric_\mu \nabla f , \nabla f} = g_{i,j} (\Ric_{\mu})^i_k \nabla^k f \nabla^j f  = (\Ric_\mu)_{i,j} \nabla^i f \nabla^j f = \Ric_\mu(\nabla f,\nabla f) ~.
 \]
 Similarly, reciprocal tensors are interpreted according to the appropriate context. For instance, the $2$-contravariant tensor $(\text{II}^{-1})^{\alpha,\beta}$ is defined by:
 \[
 (\text{II}^{-1})^{i,j} \text{II}_{j,k} = \delta^i_k ~.
 \]
We freely raise and lower indices by contracting with the metric when there is no ambiguity regarding which underlying metric is being used; this is indeed the case throughout this work, with the exception of Subsection \ref{subsec:strange-flow}. Since we mostly deal with $2$-tensors, the only possible contraction is often denoted by using the trace notation $tr$. 

In addition to the already mentioned notation in the weighted-Riemannian setting, we will also make use of $\div_{g,\mu} = \div_{(M,g,\mu)}$ to denote the weighted-divergence operator on the weighted-manifold $(M,g,\mu)$, so that if $\mu = \exp(-V) d\vol_{M}$ then:
\[
\text{div}_{g,\mu}(X) := \exp(V) \div_{g} (\exp(-V) X) = \div_{g}(X) - g(\nabla_{g} V , X) ~,~ \forall X \in T M ~;
\]
this is the natural notion of divergence in the weighted-manifold setting, satisfying the usual integration by parts formula (say if $M$ is closed):
\[
\int_{M} f \cdot \text{div}_{g,\mu}(X) d\mu = - \int_{M} g(\nabla_{g} f,X) d\mu ~,~\forall X \in T M ~.
\]

Finally, when studying consequences of the $\CD(\rho,N)$ condition, the various expressions in which $N$ appears are interpreted in the limiting sense when $1/N= 0$. For instance, $N/(N-1)$ is interpreted as $1$, and $N f^{1/N}$ is interpreted as $\log f$ (since $\lim_{1/N \rightarrow 0} N (x^{1/N} -1) = \log(x)$; the constant $-1$ in the latter limit does not influence our application of this convention). 
 
 \subsection{Generalized Reilly Formula}

Denote by $\S_0(M)$ the class of functions $u$ on $M$ which are $C^2$ smooth in the interior of $M$ and $C^1$ smooth on the entire compact $M$. 
Denote by $\S_N(M)$ the subclass of functions which in addition satisfy that $u_\nu$ is $C^1$ smooth on $\partial M$. 

\begin{thm}[Generalized Reilly Formula] \label{thm:Reilly}
For any function $u \in \S_N(M)$: 
\begin{multline}
\label{Reilly}
\int_M (L u)^2 d\mu = \int_M \norm{\nabla^2 u}^2 d\mu + \int_M \scalar{ \Ric_\mu \; \nabla u, \nabla u} d\mu + \\
\int_{\partial M} H_\mu (u_\nu)^2 d\mu_{\partial M} + \int_{\partial M} \scalar{\II_{\partial M}  \;\nabla_{\partial M} u,\nabla_{\partial M} u} d\mu_{\partial M} - 2 \int_{\partial M} \scalar{\nabla_{\partial M} u_\nu, \nabla_{\partial M} u} d\mu_{\partial M} ~.
\end{multline}
Here $\norm{\nabla^2 u}$ denotes the Hilbert-Schmidt norm of $\nabla^2 u$.
\end{thm}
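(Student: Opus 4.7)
The plan is to derive this from the weighted Bochner identity combined with the boundary integration-by-parts formula already stated in the excerpt. Starting point: the $\infty$-dimensional Bochner identity for the weighted Laplacian,
\[
\tfrac{1}{2} L |\nabla u|^2 = \norm{\nabla^2 u}^2 + \scalar{\nabla Lu,\nabla u} + \Ric_\mu(\nabla u,\nabla u),
\]
which follows from the classical Bochner identity by writing $L = \Delta_g - \scalar{\nabla V,\nabla\cdot}$, using $\tfrac{1}{2}\scalar{\nabla V,\nabla|\nabla u|^2} = \nabla^2 u(\nabla V,\nabla u)$ and $\scalar{\nabla\scalar{\nabla V,\nabla u},\nabla u} = \nabla^2 V(\nabla u,\nabla u) + \nabla^2 u(\nabla V,\nabla u)$, which precisely produce the $\nabla^2 V$ contribution that upgrades $\Ric_g$ to $\Ric_\mu$.

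Next, I would integrate the Bochner identity against $\mu$ and apply the boundary integration-by-parts formula from the excerpt (with $f=u$, $h=Lu$) to get
\[
\int_M \scalar{\nabla Lu,\nabla u} d\mu = \int_{\partial M} u_\nu \, Lu \, d\mu_{\partial M} - \int_M (Lu)^2 d\mu,
\]
while $\int_M L|\nabla u|^2 d\mu = \int_{\partial M} (|\nabla u|^2)_\nu d\mu_{\partial M}$ (apply the same identity with $h\equiv 1$). Substituting and rearranging yields
\[
\int_M (Lu)^2 d\mu = \int_M \norm{\nabla^2 u}^2 d\mu + \int_M \Ric_\mu(\nabla u,\nabla u) d\mu + \int_{\partial M}\Bigl( u_\nu Lu - \tfrac{1}{2}(|\nabla u|^2)_\nu\Bigr)\, d\mu_{\partial M}.
\]
It remains to rewrite the boundary integrand as the three boundary terms in \eqref{Reilly}.

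For the boundary decomposition, fix $x\in\partial M$ and split $\nabla u = \nabla_{\partial M} u + u_\nu \nu$. Differentiating $u_\nu = \scalar{\nabla u,\nu}$ along a tangent vector $X$ and using $\scalar{\nabla_X\nu,\nu}=0$ gives the key identity $\nabla^2 u(\nu,X) = X u_\nu - \II(X,\nabla_{\partial M} u)$. Setting $X = \nabla_{\partial M} u$ and noting $(|\nabla u|^2)_\nu = 2\nabla^2 u(\nu,\nabla u)$, I get
\[
\tfrac{1}{2}(|\nabla u|^2)_\nu = \scalar{\nabla_{\partial M} u_\nu,\nabla_{\partial M} u} - \II(\nabla_{\partial M} u,\nabla_{\partial M} u) + u_\nu u_{\nu\nu}.
\]
In parallel, decomposing $\Delta_g u$ and $\scalar{\nabla V,\nabla u}$ into tangential and normal parts on $\partial M$ and using $H_\mu = H_g - V_\nu$ gives
\[
Lu = L_{\partial M} u + u_{\nu\nu} + H_\mu u_\nu \quad\text{on}\quad \partial M,
\]
where $L_{\partial M}$ is the weighted Laplacian of $(\partial M, g|_{\partial M},\mu_{\partial M})$. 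Subtracting, the $u_\nu u_{\nu\nu}$ terms cancel, and
\[
u_\nu Lu - \tfrac{1}{2}(|\nabla u|^2)_\nu = u_\nu L_{\partial M} u + H_\mu u_\nu^2 + \II(\nabla_{\partial M} u,\nabla_{\partial M} u) - \scalar{\nabla_{\partial M} u_\nu,\nabla_{\partial M} u}.
\]
Since $\partial M$ is closed, applying integration by parts on $\partial M$ to the first term yields $\int_{\partial M} u_\nu L_{\partial M} u\, d\mu_{\partial M} = -\int_{\partial M}\scalar{\nabla_{\partial M} u_\nu,\nabla_{\partial M} u} d\mu_{\partial M}$, which combines with the last term to give the factor $-2$ in \eqref{Reilly}.

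The main technical obstacle is bookkeeping rather than conceptual: one must track the tangential/normal split of $\nabla u$ and $\nabla V$ carefully, and justify the two integrations by parts under the hypothesis $u\in \S_N(M)$—the bulk identity requires the generalized Stokes theorem mentioned in Section \ref{sec:prelim} (since $\nabla^2 u$ need only be defined on $int(M)$), while the tangential integration by parts on $\partial M$ uses that $u_\nu \in C^1(\partial M)$ and $\nabla_{\partial M} u$ is continuous on the closed manifold $\partial M$. With these regularity inputs in place, all intermediate integrals are absolutely convergent and the identity follows.
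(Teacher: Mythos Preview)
Your argument is correct and follows the standard route---integrate the weighted Bochner identity, apply Stokes, then decompose the boundary integrand via the tangential/normal split and the relation $Lu = L_{\partial M} u + u_{\nu\nu} + H_\mu u_\nu$ on $\partial M$---which is exactly the strategy of Reilly and Ma--Du, and of the companion paper \cite{KolesnikovEMilmanReillyPart1} to which the present paper defers the proof. The only point worth flagging is that for $u$ merely in $\S_N(M)$ the pointwise Bochner identity formally involves third derivatives of $u$, so one first establishes the formula for smooth $u$ and then passes to $\S_N(M)$ by approximation; you gesture at the regularity issue but this step is not quite covered by invoking the generalized Stokes theorem alone.
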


This natural generalization of the (integrated) Bochner--Lichnerowicz--Weitzenb\"{o}ck formula for manifolds with boundary was first obtained by R.C.~Reilly \cite{ReillyOriginalFormula} in the classical Riemannian setting ($\mu=\vol_M$). The version above, proved in our companion work \cite{KolesnikovEMilmanReillyPart1}, is a variant of a prior generalized version due to M.~Li and S.-H.~Du \cite{MaDuGeneralizedReilly}.

\begin{rem}
For minor technical reasons, it will be useful to record the following variants of the generalized Reilly formula, which were proved in \cite{KolesnikovEMilmanReillyPart1}:
\begin{itemize}
\item
If $u_\nu$ or $u$ are constant on $\partial M$ and $u \in \S_0(M)$, then:
\begin{multline}
\label{Reilly3}
\int_M (L u)^2 d\mu = \int_M \norm{\nabla^2 u}^2 d\mu + \int_M \scalar{ \Ric_\mu \; \nabla u, \nabla u} d\mu + \\
\int_{\partial M} H_\mu (u_\nu)^2 d\mu + \int_{\partial M} \scalar{\II_{\partial M}  \;\nabla_{\partial M} u,\nabla_{\partial M} u} d\mu ~.
\end{multline}
\item
If $u \in \S_D(M) := \S_0(M) \cap C^2(\partial M)$, then:
\begin{multline}
\label{Reilly2}
\int_M (L u)^2 d\mu = \int_M \norm{\nabla^2 u}^2 d\mu + \int_M \scalar{ \Ric_\mu \; \nabla u, \nabla u} d\mu + \\
\int_{\partial M} H_\mu (u_\nu)^2 d\mu + \int_{\partial M} \scalar{\II_{\partial M}  \;\nabla_{\partial M} u,\nabla_{\partial M} u} d\mu + 2 \int_{\partial M} u_\nu L_{\partial M} u \; d\mu_{\partial M}  ~.
\end{multline}
\end{itemize}
\end{rem}

\subsection{The $\CD(\rho,N)$ condition for $1/N \in [-\infty,1/n]$}

Given $u \in C^2_{loc}(M)$, denote:
\[
\Gamma_2(u) := \scalar{ \Ric_\mu \; \nabla u, \nabla u} + \norm{\nabla^2 u}^2.
\]
The following lemma for $1/N \in [0,1/n]$ is due to Bakry (e.g. \cite[Section 6]{BakryStFlour}); the extension to negative $N$ was noted in our companion work \cite{KolesnikovEMilmanReillyPart1}:

\begin{lem} \label{lem:CS}
For any $u \in C^2_{loc}(M)$ and $1/N \in [-\infty,1/n]$:
\begin{equation} \label{eq:Bakry-CS}
\Gamma_2(u) \geq \scalar{\Ric_{\mu,N}\; \nabla u, \nabla u} + \frac{1}{N} (Lu)^2 ~.
\end{equation}
Our convention throughout this work is that $-\infty \cdot 0 = 0$, and so if $Lu = 0$ at a point $p \in M$, the assertion when $\frac{1}{N} = -\infty$  is that:
\[
\Gamma_2(u) \geq \scalar{\Ric_{\mu,0}\; \nabla u, \nabla u} ~,
\]
at that point. 
\end{lem}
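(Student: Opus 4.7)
The plan is to reduce the claim to a single pointwise algebraic inequality between the trace-like quantity $\Delta u$ and the Hilbert--Schmidt norm $\norm{\nabla^2 u}^2$, exploiting the defining identity
\[
\Ric_\mu - \Ric_{\mu,N} = \frac{1}{N-n}\, dV\otimes dV,
\]
so that
\[
\Gamma_2(u) - \scalar{\Ric_{\mu,N}\,\nabla u,\nabla u} = \norm{\nabla^2 u}^2 + \frac{1}{N-n}\scalar{\nabla V,\nabla u}^2.
\]
Setting $a := Lu$ and $b := \scalar{\nabla V,\nabla u}$, the weighted-Laplacian identity $\Delta u = a + b$ reduces the lemma to the pointwise inequality
\[
\norm{\nabla^2 u}^2 + \frac{b^2}{N-n} \;\geq\; \frac{a^2}{N}.
\]

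The first step is the standard Cauchy--Schwarz estimate $\norm{\nabla^2 u}^2 \geq \frac{(\mathrm{tr}\,\nabla^2 u)^2}{n} = \frac{(a+b)^2}{n}$, which is sharp exactly when $\nabla^2 u$ is a scalar multiple of the metric. It therefore suffices to verify the purely algebraic inequality
\[
\frac{(a+b)^2}{n} + \frac{b^2}{N-n} - \frac{a^2}{N} \;\geq\; 0 \quad \text{for all } a,b\in\Real.
\]
A direct computation collecting terms gives
\[
\frac{(a+b)^2}{n} + \frac{b^2}{N-n} - \frac{a^2}{N} \;=\; \frac{\bigl((N-n)\,a + N\,b\bigr)^2}{n\,N\,(N-n)},
\]
so the sign is determined by $n N(N-n)$. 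In the admissible range $1/N\in[-\infty,1/n]$ (i.e.\ $N\ge n$ or $N\le 0$), both factors $N$ and $N-n$ have the same sign, hence $nN(N-n)\ge 0$, and the inequality follows.

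The limiting cases fit naturally: for $1/N = 0$ the identity above collapses to $\norm{\nabla^2 u}^2\ge 0$; for $N = n$ (forcing $V$ constant, so $b=0$) it recovers the classical trace inequality $\norm{\nabla^2 u}^2\ge (\Delta u)^2/n$; and for $N = 0$ one obtains the stated convention by letting $N\to 0^-$ in the $N\le 0$ inequality, which forces the coefficient of $(Lu)^2$ to blow up and hence forces $Lu=0$ at any point where finiteness is desired, matching the statement $\Gamma_2(u)\ge \scalar{\Ric_{\mu,0}\,\nabla u,\nabla u}$ under the convention $-\infty\cdot 0 = 0$.

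The only potential obstacle is bookkeeping with signs in the range $N\le 0$, where naive manipulations (such as clearing the denominator $N-n$) reverse inequalities; the perfect-square reformulation above sidesteps this entirely by packaging the estimate into a manifestly nonnegative quantity divided by $nN(N-n)\ge 0$. Equality holds precisely when $\nabla^2 u = \frac{\Delta u}{n} g$ and $(N-n)\,Lu + N\,\scalar{\nabla V,\nabla u} = 0$ pointwise, which is the well-known rigidity case of the Bakry--\'Emery $\Gamma_2$ inequality.
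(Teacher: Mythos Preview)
Your proof is correct and follows the standard Bakry approach that the paper references (the paper itself does not supply a proof, deferring to Bakry's lecture notes and the companion work for the case $N\le 0$). The reduction via $\Ric_\mu - \Ric_{\mu,N} = \frac{1}{N-n}\,dV\otimes dV$, the trace inequality $\norm{\nabla^2 u}^2 \ge (\Delta u)^2/n$, and the perfect-square identity $\frac{(a+b)^2}{n} + \frac{b^2}{N-n} - \frac{a^2}{N} = \frac{((N-n)a + Nb)^2}{nN(N-n)}$ are exactly the ingredients one expects; your observation that $nN(N-n)\ge 0$ throughout the admissible range $1/N\in[-\infty,1/n]$ handles both $N\ge n$ and $N\le 0$ uniformly.

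One small point of phrasing: in the $N=0$ case you write that the blow-up of the coefficient ``forces $Lu=0$ at any point where finiteness is desired''. The inequality does not force anything---when $Lu\neq 0$ the right-hand side is $-\infty$ and the inequality holds trivially. The convention $-\infty\cdot 0=0$ is invoked only at points where $Lu=0$, and there your Cauchy--Schwarz argument (with $a=0$, so $\Delta u = b$, giving $\norm{\nabla^2 u}^2 \ge b^2/n$) delivers exactly $\Gamma_2(u)\ge \scalar{\Ric_{\mu,0}\,\nabla u,\nabla u}$. This is a cosmetic issue, not a mathematical one.
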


\begin{rem}
In fact, it is easy to show (e.g. \cite{BakryStFlour,KolesnikovEMilmanReillyPart1}) given $\rho \in \Real$ and $1/N \in (-\infty,1/n]$, that
$\Ric_{\mu,N} \geq \rho g$ on $M$ if and only if:
\[
\Gamma_2(u) \geq \rho \abs{\nabla u}^2 + \frac{1}{N} (Lu)^2 ~,~ \forall u \in C^2_{loc}(M) ~.
\]
\end{rem}

\subsection{Solution to Poisson Equation on Weighted Riemannian Manifolds}

As our manifold is smooth, connected, compact, with $C^2$ smooth boundary and strictly positive $C^2$-density all the way up to the boundary, all of the classical 
elliptic existence, uniqueness and regularity results (e.g. \cite[Chapter 8]{GilbargTrudinger}, \cite[Chapter 5]{LiebermanObliqueBook}, \cite[Chapter 3]{LadyEllipticBook}) 
immediately extend from the Euclidean setting to our weighted-manifold one (see e.g. \cite{Taylor-PDEBook-I,MorreyBook}); for more general situations (weaker regularity of metric, Lipschitz domains, etc.) see e.g. \cite{MitreaTaylor-PDEonLipManifolds} and the references therein. 
We summarize the results we require in the following:

\begin{thm}
Given a weighted-manifold $(M,g,\mu)$, $\mu = \exp(-V) d\vol_M$, we assume that $\partial M$ is $C^2$ smooth. Let $\alpha \in (0,1)$, and assume that $g$ is $C^{2,\alpha}$ smooth and $V \in C^{1,\alpha}(M)$. 
Let $f \in C^{\alpha}(M)$, $\varphi_D \in C^{2}(\partial M)$ and $\varphi_N \in C^{1}(\partial M)$. Then there exists a function $u \in C^{2,\alpha}_{loc}(int(M)) \cap C^{1,\beta}(M)$ for all $\beta \in (0,1)$, which solves: \[
L u = f ~ \text{on $M$} ~,
\]
with either of the following boundary conditions on $\partial M$:
\begin{enumerate}
\item Dirichlet: $u|_{\partial M} = \varphi_D$, assuming $\partial M \neq \emptyset$.  
\item Neumann: $u_\nu|_{\partial M} = \varphi_N$, assuming the following compatibility condition is satisfied:
\[
 \int_{M} f d\mu = \int_{\partial M} \varphi_N d\mu_{\partial M} ~.
\]
\end{enumerate}
In particular, $u \in \S_0(M)$ in either case. Moreover, $u \in \S_N(M)$ in the Neumann case and $u \in \S_D(M)$ in the Dirichlet case. 

\end{thm}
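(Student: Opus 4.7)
The plan is to reduce the problem to classical elliptic PDE theory for uniformly elliptic second-order operators with H\"older coefficients, applied in local coordinate charts, and then patch the pieces together. The weighted Laplacian $L = \Delta_g - \scalar{\nabla V,\nabla \cdot}$ is, in any coordinate chart, the operator $g^{ij}\partial_i\partial_j + b^i \partial_i$ where $b^i$ absorbs the Christoffel symbol contributions from $\Delta_g$ and the gradient of $V$. Under the regularity assumptions $g \in C^{2,\alpha}$ and $V \in C^{1,\alpha}$, the principal part $g^{ij}$ is $C^{2,\alpha}$ (in particular $C^{1,\alpha}$) and the drift $b^i$ is $C^{\alpha}$; uniform ellipticity on compact $M$ is automatic from smoothness and compactness. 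Thus in each chart, $L$ falls squarely within the hypotheses of classical Schauder theory.

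First, I would cover $M$ by finitely many interior charts and finitely many boundary charts. Near $\partial M$, the $C^{2,\alpha}$ regularity of the boundary provides straightening diffeomorphisms that map a neighborhood of a boundary point to a half-ball, transforming $L$ into a uniformly elliptic operator on the half-ball whose coefficient regularity is preserved and whose conormal direction pulls back to the (transverse) $e_n$-direction up to smooth reparametrization. In these charts, the Dirichlet and the oblique/Neumann boundary value problems become the classical problems treated in e.g.\ Gilbarg--Trudinger, Chapters 6--8, or Lieberman's oblique derivative book. Invoking the classical $C^{2,\alpha}$ Schauder existence theorems in each chart together with interior and boundary Schauder estimates, one obtains local solutions with the desired interior $C^{2,\alpha}_{loc}$ regularity; the global $C^{1,\beta}$ regularity up to $\partial M$ for any $\beta \in (0,1)$ follows from the boundary Schauder estimates (which give $C^{1,\alpha}$ up to the boundary, hence $C^{1,\beta}$ for any $\beta \leq \alpha$, and one extends to all $\beta<1$ by a standard interpolation/bootstrap argument once one knows $u \in C^{1,\alpha}$ and $\alpha$ can be taken arbitrarily close to $1$ after shrinking).

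For the global existence, in the Dirichlet case one may either construct $u$ by the classical method of continuity, joining $L$ to a reference operator (e.g.\ $\Delta_g$) via a one-parameter family, and using the a priori Schauder estimate together with the strong maximum principle (which applies since $L$ has no zeroth-order term, giving uniqueness) to close the continuity argument; alternatively, one applies the Fredholm alternative to the compact resolvent of $L$ on $L^2(\mu)$ with Dirichlet data, noting that the kernel is trivial by the maximum principle. In the Neumann case, the compatibility condition is forced by the weighted integration by parts identity $\int_M Lu\, d\mu = \int_{\partial M} u_\nu\, d\mu_{\partial M}$ stated in the introduction. Conversely, $L$ with Neumann boundary is self-adjoint on $L^2(\mu)$ with compact resolvent and kernel equal to the constants, so the Fredholm alternative yields a solution whenever the data is $L^2(\mu)$-orthogonal to constants, i.e.\ precisely under the stated compatibility hypothesis. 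The regularity upgrade from a weak Neumann solution to $u \in \mathcal{S}_N(M)$ follows from the $C^{1,\alpha}$ boundary estimate for the oblique derivative problem, which also gives that $u_\nu \in C^{1}(\partial M)$.

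The only real subtlety is verifying that the coefficient regularity produced by $g \in C^{2,\alpha}$, $V \in C^{1,\alpha}$, and the $C^2$ (not necessarily $C^{2,\alpha}$) boundary is enough for the cited Schauder theorems as written; since the theorems in Gilbarg--Trudinger and Lieberman are stated under $C^{2,\alpha}$ boundary, I would note that the stated hypotheses are somewhat more than strictly needed, and that one can either strengthen $\partial M$ to $C^{2,\alpha}$ by approximation (preserving the solution via the a priori estimates) or appeal directly to refinements in \cite{MitreaTaylor-PDEonLipManifolds}. This is the only mildly delicate bookkeeping step; everything else is a direct transfer of classical Euclidean theory to the weighted-manifold setting.
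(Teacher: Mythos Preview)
Your proposal is correct and matches the paper's treatment: the paper does not give a proof of this theorem at all, but simply asserts that it follows from classical elliptic existence and regularity theory (citing Gilbarg--Trudinger, Lieberman, Ladyzhenskaya--Ural'tseva, Taylor, Morrey, and Mitrea--Taylor), which is precisely the reduction you sketch. One minor wobble: your justification of $C^{1,\beta}$ for all $\beta<1$ via ``taking $\alpha$ arbitrarily close to $1$'' is not quite right since $\alpha$ is fixed by the hypotheses; the clean route is $L^p$ theory (the coefficients are continuous and bounded, so $u\in W^{2,p}$ for all $p<\infty$ up to the $C^2$ boundary, hence $u\in C^{1,\beta}$ for all $\beta<1$ by Sobolev embedding).
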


\begin{rem} \label{rem:C3}
As explained in \cite{KolesnikovEMilmanReillyPart1}, the generalized Reilly formula remains valid if the metric $g$ is only assumed to be $C^3$ smooth, in which case the above regularity results still apply. 
\end{rem}

We will not require the uniqueness of $u$ above, but for completeness we mention that this is indeed the case for Dirichlet boundary conditions, and up to an additive constant in the Neumann case. 

\subsection{Spectral-gap on the boundary of a weighted Riemannian manifold}

It is well-known (e.g. \cite{Taylor-PDE-II-Book}) that the symmetric operator $-L_{\partial M}$ on $L^2(\mu_{\partial M})$ with domain $C^1(\partial M)$ admits a (unique) self-adjoint positive semi-definite extension having discrete non-negative spectrum with corresponding complete orthonormal bases of eigenfunctions. The  best constant $\lambda_1$ in the following Poincar\'e inequality:
\[
\lambda_1 \Var_{\mu_{\partial M}}(f) \leq \int_{\partial M} \abs{\nabla_{\partial M} f}^2 d\mu_{\partial M} ~,~ \forall f \in C^1(\partial M) ~,
\]
then coincides with the spectral-gap, i.e. the first positive eigenvalue of $-L_{\partial M}$ away from the trivial zero eigenvalue corresponding to constant functions:
\[
-L_{\partial M} u_1 = \lambda_1 u_1 \text{ on $\partial M$ } ~.
\]
Since $\partial M$ is $C^2$ smooth and hence $V \in C^2(\partial M)$, elliptic regularity guarantees that all the eigenfunctions (and in particular $u_1$) are as smooth as $\partial M$, namely $C^2$ smooth.

\section{Poincar\'e-type inequalities on $\partial M$} \label{sec:Col}

\subsection{Generalized Colesanti Inequality}

Most of the results in this work are based on the following Poincar\'e-type inequality on $\partial M$, which appears to be novel in the Riemannian setting (even in the classical non-weighted setting, i.e. $N=n$):

\begin{thm}[Generalized Colesanti Inequality] \label{thm:Colesanti}
Assume that $(M,g,\mu)$ satisfies the $\CD(0,N)$ condition ($1/N \in [-\infty,1/n]$) and that $\II_{\partial M} > 0$ ($M$ is strictly locally-convex).
Then the following inequality holds for any $f \in C^{1}(\partial M)$:
\begin{equation} \label{eq:gen-full0}
 \int_{\partial M} H_\mu f^2 d\mu_{\partial M} - \frac{N-1}{N}\frac{\brac{\int _{\partial M} f d\mu_{\partial M}}^2}{\mu(M)} \leq  \int_{\partial M} \scalar{\II_{\partial M}^{-1} \;\nabla_{\partial M} f,\nabla_{\partial M} f} d\mu_{\partial M} ~.
\end{equation} 
\end{thm}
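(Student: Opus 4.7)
The plan is to combine the generalized Reilly formula (Theorem~\ref{thm:Reilly}) with the Bakry--\'Emery inequality (Lemma~\ref{lem:CS}) applied to an auxiliary function $u$ whose Neumann data equals $f$; inequality~\eqref{eq:gen-full0} then emerges after a pointwise Young-type duality that replaces the form $\II_{\partial M}$ appearing in the boundary terms of Reilly with its inverse $\II_{\partial M}^{-1}$ appearing in the target inequality.

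\textbf{Step 1 (auxiliary Neumann solve).} Set $c := \mu(M)^{-1} \int_{\partial M} f \, d\mu_{\partial M}$. The compatibility condition $\int_M c \, d\mu = \int_{\partial M} f \, d\mu_{\partial M}$ holds by construction, so the Neumann existence result in Section~\ref{sec:prelim} (applicable since $f \in C^1(\partial M)$) yields $u \in \S_N(M)$ solving $Lu = c$ on $M$ with $u_\nu = f$ on $\partial M$. Observe that $\int_M (Lu)^2 \, d\mu = c^2 \mu(M) = \brac{\int_{\partial M} f \, d\mu_{\partial M}}^2 / \mu(M)$, which is precisely the correction term appearing on the left of~\eqref{eq:gen-full0}.

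\textbf{Step 2 (Reilly plus Bakry).} Applying~\eqref{Reilly} to $u$ and substituting $u_\nu = f$ throughout the boundary terms produces $\int_M (Lu)^2 \, d\mu = \int_M \brac{\norm{\nabla^2 u}^2 + \scalar{\Ric_\mu \nabla u, \nabla u}} d\mu + B$, where
\begin{multline*}
B := \int_{\partial M} H_\mu f^2 \, d\mu_{\partial M} + \int_{\partial M} \scalar{\II_{\partial M} \nabla_{\partial M} u, \nabla_{\partial M} u} \, d\mu_{\partial M} \\
{}- 2 \int_{\partial M} \scalar{\nabla_{\partial M} f, \nabla_{\partial M} u} \, d\mu_{\partial M}.
\end{multline*}
Under $\CD(0,N)$, Lemma~\ref{lem:CS} bounds the bulk integrand pointwise below by $\frac{1}{N}(Lu)^2$, with the paper's conventions handling the limiting cases $1/N \in \{0, -\infty\}$. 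Substituting and rearranging yields $\frac{N-1}{N} \int_M (Lu)^2 \, d\mu \geq B$.

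\textbf{Step 3 (dualize and conclude).} For any positive-definite symmetric form $\II_{\partial M}$, the pointwise Young-type inequality $2 \scalar{a, b} \leq \scalar{\II_{\partial M}^{-1} a, a} + \scalar{\II_{\partial M} b, b}$ (obtained by writing $2\scalar{a,b} = 2\scalar{\II_{\partial M}^{-1/2} a, \II_{\partial M}^{1/2} b}$ and applying Cauchy--Schwarz) applied pointwise with $a = \nabla_{\partial M} f$ and $b = \nabla_{\partial M} u$ gives, after integration, $B \geq \int_{\partial M} H_\mu f^2 \, d\mu_{\partial M} - \int_{\partial M} \scalar{\II_{\partial M}^{-1} \nabla_{\partial M} f, \nabla_{\partial M} f} \, d\mu_{\partial M}$. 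Chaining with Step~2 and using the identity from Step~1 yields exactly~\eqref{eq:gen-full0}. The argument is a clean composition of standard tools: the only conceptual step is choosing the Neumann data $u_\nu = f$, which precisely aligns the Reilly boundary terms with the target inequality, and the only technical subtlety is ensuring $u \in \S_N(M)$ so that the generalized Reilly formula applies---this is afforded by the elliptic regularity result in Section~\ref{sec:prelim} under the hypothesis $f \in C^1(\partial M)$.
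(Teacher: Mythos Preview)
Your proof is correct and follows essentially the same approach as the paper's own proof: solve the Neumann problem $Lu = c$, $u_\nu = f$, apply the generalized Reilly formula, use the $\CD(0,N)$ condition via Lemma~\ref{lem:CS} to absorb the bulk term, and apply the pointwise Cauchy--Schwarz/Young inequality $2\scalar{a,b} \le \scalar{\II^{-1}a,a} + \scalar{\II\, b,b}$ to swap $\II_{\partial M}$ for $\II_{\partial M}^{-1}$. The only difference is the ordering of the steps (the paper first applies Cauchy--Schwarz to the Reilly formula for a general $u \in \S_N(M)$ and only then specializes to the Neumann solution), which is immaterial.
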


\begin{rem}
Thanks to our convention from Lemma \ref{lem:CS} that $-\infty \cdot 0 = 0$, observe that Theorem \ref{thm:Colesanti} is still meaningful when $N=0$ ($1/N = -\infty$), but only for functions $f$ with $\int _{\partial M} f d\mu_{\partial M} = 0$. 
\end{rem}

\begin{rem}
Theorem \ref{thm:Colesanti} was obtained by A.~Colesanti in \cite{ColesantiPoincareInequality} with $N=n$ for a compact subset $M$ of \emph{Euclidean space} $\Real^n$ having a $C^2$ strictly convex boundary and endowed with the Lebesgue measure ($V=0$). Colesanti was mainly interested in the case that $f$ has zero mean $\int_{\partial M} f \ d \mu_{\partial M}=0$, but his proof yields the additional second term in (\ref{eq:gen-full0}). This second term is of crucial importance, revealing the dependence on the generalized-dimension $N$, and demonstrating that even in the Euclidean case, Colesanti's original version remains valid for zero-mean functions under the weakest $CD(0,0)$ assumption. Colesanti derived this inequality as an infinitesimal version of the (purely Euclidean) Brunn-Minkowski inequality, and so his method is naturally \emph{confined to the Euclidean setting}; see \cite{ColesantiEugenia-PoincareFromAF} for further possible (Euclidean) extensions. As observed in \cite{ColesantiPoincareInequality}, Theorem \ref{thm:Colesanti} yields a sharp Poincar\'e inequality on $S^{n-1}$ when $M$ is a Euclidean ball in $\Real^n$. 
\end{rem}

Our proof of Theorem \ref{thm:Colesanti} is based on an $L^2(\mu)$-duality argument coupled with an application of the generalized Reilly formula. Further applications of $L^2$-duality in the context of obtaining Poincar\'e-type inequalities may be found in \cite{Hormander1965L2EstimatesAndDBarProblem,Helffer-DecayOfCorrelationsViaWittenLaplacian,LedouxSpinSystemsRevisited, KlartagUnconditionalVariance,BartheCorderoVariance,KlartagMomentMap}.  

\begin{proof}[Proof of Theorem \ref{thm:Colesanti}]
By applying the Cauchy--Schwarz inequality to the last-term in the generalized Reilly formula:
\[
2 \scalar{\nabla_{\partial M} u_\nu, \nabla_{\partial M} u} \leq \scalar{\II_{\partial M} \; \nabla_{\partial M} u , \nabla_{\partial M} u} + \scalar{\II_{\partial M}^{-1} \nabla_{\partial M} u_\nu , \nabla_{\partial M} u_\nu} ~,
\]
we obtain for any $u \in \S_N(M)$:
\begin{align*}
\int_M (L u)^2 d\mu  & \geq \int_M \brac{\norm{\nabla^2 u}^2 + \scalar{ \Ric_\mu \; \nabla u, \nabla u}} d\mu \\
& + \int_{\partial M} H_\mu (u_\nu)^2 d\mu_{\partial M} - \int_{\partial M} \scalar{\II_{\partial M}^{-1} \;\nabla_{\partial M} u_\nu,\nabla_{\partial M} u_\nu} d\mu_{\partial M} ~.
\end{align*}
Using the $\CD(0,N)$ condition as in Lemma \ref{lem:CS} with the convention that $-\infty \cdot 0 = 0$, we conclude:
\[
\frac{N-1}{N} \int_M (L u)^2 d\mu  \geq  \int_{\partial M} H_\mu (u_\nu)^2 d\mu_{\partial M} - \int_{\partial M} \scalar{ \II_{\partial M}^{-1} \;\nabla_{\partial M} u_\nu,\nabla_{\partial M} u_\nu} d\mu_{\partial M} ~.
\]

Given $f \in C^1(\partial M)$, we now
solve the following Neumann Laplace problem for $u \in \S_N(M)$ satisfying:
\[
L u \equiv \frac{1}{\mu(M)} \int_{\partial M} f d\mu_{\partial M}  \text{ on $M$ } ~,~  u_\nu = f  \text{ on $\partial M$} ~;
\]
note that the compatibility condition $\int_{\partial M} u_\nu d\mu_{\partial M} = \int_M (L u) d\mu$ is indeed satisfied, so that a solution exists. 
Plugging this back into the previous estimate, the asserted inequality (\ref{eq:gen-full0}) immediately follows. 
\end{proof}

\begin{rem}
Peculiarly, it is possible to strengthen this inequality when $N \neq 0$ by using it for $f+z$ and optimizing over $z \in \Real$; alternatively and equivalently, we may solve in the last step above:
\[
L u \equiv z \text{ on $M$ } ~,~  u_\nu = f - \dashint_{\partial M} f d\mu_{\partial M}  +  z \frac{\mu(M)}{\mu_{\partial M}(\partial M)} \text{ on $\partial M$} ~.
\]
This results in the following stronger inequality: \[
\int_{\partial M} H_\mu f^2 d\mu_{\partial M} - \frac{N-1}{N}\frac{\brac{\int _{\partial M} f d\mu_{\partial M}}^2}{\mu(M)} + \frac{\brac{ \int_{\partial M} f \beta d\mu_{\partial M} }^2}{\int_{\partial M} \beta d\mu_{\partial M}} \leq  \int_{\partial M} \scalar{\II_{\partial M}^{-1} \;\nabla_{\partial M} f,\nabla_{\partial M} f} d\mu_{\partial M} ~,
\]
where:
\[
\beta(x) := \frac{N-1}{N} \frac{\mu_{\partial M}(\partial M)}{\mu(M)} - H_\mu(x) ~.
\]
Note that $\int_{\partial M} \beta d\mu_{\partial M} \geq 0$ by testing (\ref{eq:gen-full0}) on the constant function $f \equiv 1$. It may be shown that this integral is in fact strictly positive, unless $M$ is isometric to a Euclidean ball and $V$ is constant - see Remark \ref{rem:HR-equality}; so in all other cases, this yields a strict improvement over (\ref{eq:gen-full0}). 

\noindent
By Colesanti's argument in the Euclidean setting, the weaker (\ref{eq:gen-full0}) inequality constitutes an infinitesimal version of the (sharp) Brunn--Minkowski inequality (for convex sets), and so one cannot hope to improve (\ref{eq:gen-full0}) in the corresponding cases where Brunn--Minkowski is sharp. 
On the other hand, it would be interesting to integrate back the stronger inequality and obtain a refined version of Brunn--Minkowski, which would perhaps be better suited for obtaining delicate stability results. 
\end{rem}

\subsection{A Dual Version}

Next, we establish a dual version of Theorem \ref{thm:Colesanti}, which in fact applies whenever $M$ is only assumed (generalized) \emph{mean-convex} and under a general $\CD(\rho,N)$ condition; however, the price we pay is that we do not witness the dependence on $N$ in the resulting inequality, so we might as well assume $\CD(\rho,0)$.

\begin{thm}[Dual Generalized Colesanti Inequality] \label{thm:dual-Colesanti}
Assume that $(M,g,\mu)$ satisfies the $\CD(\rho,0)$ condition, $\rho \in \Real$, and that $H_\mu > 0$ on $\partial M$ ($M$ is strictly generalized mean-convex). Then for any $f \in C^{2}(\partial M)$ and $C \in \Real$:
\[
\int_{\partial M} \scalar{\II_{\partial M}  \;\nabla_{\partial M} f,\nabla_{\partial M} f} d\mu_{\partial M} \leq \int_{\partial M} \frac{1}{H_\mu} \Bigl(L_{\partial M} f + \frac{\rho (f-C) }{2} \Bigr)^2 d\mu_{\partial M} ~.
\]
\end{thm}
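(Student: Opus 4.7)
The plan is to mirror the strategy used for Theorem \ref{thm:Colesanti}, but replacing the Neumann datum by a Dirichlet datum and replacing the Cauchy--Schwarz split $2\scalar{\nabla u_\nu,\nabla u}\leq \scalar{\II^{-1}\nabla u_\nu,\nabla u_\nu}+\scalar{\II\nabla u,\nabla u}$ by a completion-of-square on the boundary that pairs $(u_\nu)^2$ with $(L_{\partial M}u)^2/H_\mu$.

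First, given $f\in C^2(\partial M)$ and $C\in\Real$, I would solve the Dirichlet problem
\[
Lu=0 \text{ on } M ~,~ u|_{\partial M}=f-C ~,
\]
which, by the elliptic regularity theorem recalled earlier, produces $u\in \S_D(M)$. I would then apply the Dirichlet variant \eqref{Reilly2} of the generalized Reilly formula, noting that $\nabla_{\partial M}u=\nabla_{\partial M}f$ and $L_{\partial M}u=L_{\partial M}f$ since $f-C$ and $f$ differ by a constant. The LHS vanishes because $Lu=0$, yielding
\[
0 \;=\; \int_M \Gamma_2(u)\,d\mu \;+\;\int_{\partial M}\!H_\mu (u_\nu)^2 d\mu_{\partial M}\;+\;\int_{\partial M}\!\scalar{\II_{\partial M}\nabla_{\partial M}f,\nabla_{\partial M}f}d\mu_{\partial M}\;+\;2\!\int_{\partial M}\! u_\nu\, L_{\partial M}f\, d\mu_{\partial M}.
\]

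Next, I would invoke the $\CD(\rho,0)$ condition via Lemma \ref{lem:CS}. Using the convention $-\infty\cdot 0=0$, the hypothesis $Lu\equiv 0$ gives the pointwise lower bound $\Gamma_2(u)\geq \rho\,|\nabla u|^2$. Integrating and using integration by parts with $Lu=0$, one has
\[
\int_M \Gamma_2(u)\,d\mu \;\geq\; \rho\int_M |\nabla u|^2 d\mu \;=\; \rho\int_{\partial M} u_\nu (f-C)\,d\mu_{\partial M}.
\]
Substituting this back yields the inequality
\[
\int_{\partial M}\!\scalar{\II_{\partial M}\nabla_{\partial M}f,\nabla_{\partial M}f}\,d\mu_{\partial M} \;\leq\; -\!\int_{\partial M}\!\Big(H_\mu (u_\nu)^2 \;+\; 2u_\nu\bigl(L_{\partial M}f+\tfrac{\rho(f-C)}{2}\bigr)\Big)d\mu_{\partial M}.
\]

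Finally, since $H_\mu>0$ on $\partial M$, I would complete the square in the scalar quantity $u_\nu$ pointwise on $\partial M$:
\[
-H_\mu w^2 - 2w\,\bigl(L_{\partial M}f+\tfrac{\rho(f-C)}{2}\bigr)
\;=\;
\frac{1}{H_\mu}\bigl(L_{\partial M}f+\tfrac{\rho(f-C)}{2}\bigr)^2
\;-\;H_\mu\!\left(w+\frac{L_{\partial M}f+\rho(f-C)/2}{H_\mu}\right)^{\!2}.
\]
Discarding the non-positive square (which costs nothing since $H_\mu>0$) produces exactly the desired estimate. The main obstacle, though minor, is verifying that the $\CD(\rho,0)$ condition combined with $Lu=0$ really yields the clean bound $\Gamma_2(u)\geq\rho|\nabla u|^2$ under the stated sign convention, and making sure the Dirichlet solution has enough regularity for formula \eqref{Reilly2} and the integration-by-parts identity to be legitimate; both are handled by the preliminaries in Section \ref{sec:prelim}. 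Note also that, unlike the proof of Theorem \ref{thm:Colesanti}, there is no need to quantitatively use the Hessian term $\|\nabla^2 u\|^2$ here, which explains why no $N$-dependence survives in the conclusion.
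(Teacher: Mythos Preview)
Your proposal is correct and follows essentially the same route as the paper's proof: solve the harmonic Dirichlet problem, plug into the Reilly variant \eqref{Reilly2}, use $\CD(\rho,0)$ together with $Lu=0$ to bound $\Gamma_2(u)\geq \rho|\nabla u|^2$, integrate by parts, and complete the square in $u_\nu$. The only cosmetic difference is that you set the boundary datum to $f-C$ from the outset, whereas the paper sets $u|_{\partial M}=f$ and then uses $\int_{\partial M} u_\nu\,d\mu_{\partial M}=0$ to insert the constant $C$ afterward.
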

\begin{proof}
This time, we solve the Dirichlet Laplace problem for $u \in \S_D(M)$ satisfying:
\[
Lu \equiv 0 \text{ on $M$ } ~,~ u = f \text{ on $\partial M$} ~.
\]
By the generalized Reilly formula (as in (\ref{Reilly2})) and the $\CD(\rho,0)$ condition: \begin{align*}
&  0 \geq \rho \int_{M} \abs{\nabla u}^2 d \mu \\
& + \int_{\partial M} H_\mu (u_\nu)^2  d \mu_{\partial M} + \int_{\partial M} \scalar{ \II_{\partial M} \; \nabla_{\partial M} f, \nabla_{\partial M} f }  d \mu_{\partial M}  + 2 \int_{\partial M} u_\nu  L_{\partial M} f  \;  d \mu_{\partial M} ~.
\end{align*}
Integrating by parts we obtain:
\begin{align*}
& 0 \geq \rho \int_{\partial M} f u_\nu d \mu_{\partial M}\\
 & + \int_{\partial M} H_\mu (u_\nu)^2  d \mu_{\partial M} + \int_{\partial M} \scalar{ \II_{\partial M} \; \nabla_{\partial M} f, \nabla_{\partial M} f }  d \mu_{\partial M} + 2 \int_{\partial M} u_\nu  L_{\partial M} f  \;  d \mu_{\partial M} ~.
\end{align*}
Since $\int_{\partial M} u_\nu d\mu_{\partial M} = \int_M (L u) d\mu = 0$, we may as well replace the first term above by $\int_{\partial M} (f - C) u_\nu  d \mu_{\partial M}$. 
The asserted inequality is obtained following an application of the Cauchy--Schwarz inequality:
\[
H_\mu u^2_\nu + 2 u_\nu \Bigl( L_{\partial M} f +  \frac{\rho (f-C)}{2} \Bigr) \ge -  \frac{1}{H_\mu} \Bigl(L_{\partial M} f + \frac{\rho (f-C)}{2} \Bigr)^2 ~.
\]
\end{proof}

\begin{rem}
When $\II_{\partial M} > 0$ and $\rho=0$, Theorem \ref{thm:dual-Colesanti} for the $\CD(0,\infty)$ condition may be heuristically obtained from Theorem \ref{thm:Colesanti} by a formal non-rigorous duality argument:
\begin{multline*}
\int_{\partial M} \scalar{\II_{\partial M} \; \nabla_{\partial M} f , \nabla_{\partial M} f} d\mu_{\partial M}  =^?  \sup_{g} \frac{\brac{\int_{\partial M} \scalar{\nabla_{\partial M} f,\nabla_{\partial M} g} d\mu_{\partial M} }^2 }{ \int_{\partial M} \scalar{\II_{\partial M}^{-1} \; \nabla_{\partial M} g, \nabla_{\partial M} g} d\mu_{\partial M}} \\
\leq
\sup_{g} \frac{\brac{\int_{\partial M} g L_{\partial M} f d\mu_{\partial M} }^2 }{ \int_{\partial M} H_\mu g^2 d\mu_{\partial M}} \leq \int_{\partial M} \frac{1}{H_\mu} (L_{\partial M} f)^2 d\mu_{\partial M} ~,
\end{multline*}
where the supremum above is over all functions $g \in C^{1}(\partial M)$ with $\int_{\partial M} g d\mu_{\partial M} = 0$. 
The delicate point is justifying the equality under the question-mark above: Cauchy--Schwarz implies the $\geq$ direction, and so given $f \in C^{1}(\partial M)$ it remains to find a function $g \in C^{1}(\partial M)$ so that $\nabla_{\partial M} g = \II_{\partial M} \nabla_{\partial M} f$ on $\partial M$. It is well known that on a simply-connected manifold (and more generally, with vanishing first homology), a vector field is a gradient field if and only if its covariant derivative is a symmetric tensor, but this does not seem to be the case for us. 
\end{rem}

\section{Applications of Generalized Colesanti Inequalities} \label{sec:Col-App}

\subsection{Topological Consequences}

\begin{thm}
Assume that $(M,g,\mu)$ satisfies the $\CD(0,0)$ condition and that $\II_{\partial M} > 0$ ($M$ is strictly locally-convex). 
Then $\partial M$ is connected. 
\end{thm}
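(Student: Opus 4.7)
The plan is to argue by contradiction: suppose $\partial M$ has at least two connected components, let $\Sigma_1$ be one of them and set $\Sigma_2 := \partial M \setminus \Sigma_1$. Both are nonempty, compact, and clopen in $\partial M$.

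I would solve the Dirichlet problem furnished by Section~\ref{sec:prelim} for $u \in \S_D(M)$ with $Lu = 0$ on $M$, $u = 1$ on $\Sigma_1$ and $u = 0$ on $\Sigma_2$. By the strong maximum principle, $0 < u < 1$ on $\mathrm{int}(M)$, and by the Hopf boundary point lemma (valid since $\partial M$ is $C^2$) one has $u_\nu > 0$ strictly on $\Sigma_1$ and $u_\nu < 0$ strictly on $\Sigma_2$; in particular $u_\nu^2 > 0$ on $\partial M$. Since $u$ is locally constant on $\partial M$, the two terms in \eqref{Reilly} involving $\nabla_{\partial M} u$ vanish (as in \eqref{Reilly3}), collapsing the generalized Reilly formula to
\[
0 \;=\; \int_M \Gamma_2(u)\,d\mu \;+\; \int_{\partial M} H_\mu u_\nu^2\,d\mu_{\partial M}.
\]
Lemma~\ref{lem:CS} applied with $\CD(0,0)$ and $Lu \equiv 0$, together with the convention $-\infty\cdot 0 = 0$, gives $\Gamma_2(u) \geq \sscalar{\Ric_{\mu,0}\nabla u,\nabla u} \geq 0$ pointwise. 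Hence both integrals on the right are non-negative, and therefore each must vanish. In particular, $\int_{\partial M} H_\mu u_\nu^2\,d\mu_{\partial M} = 0$ combined with $u_\nu^2 > 0$ on $\partial M$ forces $H_\mu \equiv 0$ on $\partial M$.

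The hard part is extracting a contradiction with $\II_{\partial M} > 0$ from this rigidity. From $\int_M \Gamma_2(u)\,d\mu = 0$ and pointwise non-negativity one also concludes $\Gamma_2(u) \equiv 0$ on $M$; tracing through the equality case of the Cauchy-Schwarz step underlying Lemma~\ref{lem:CS} yields the Obata-type identities $\nabla^2 u = (\Delta u/n)\,g$ and $\Ric_{\mu,0}\nabla u \equiv 0$. These essentially force $M$ to split as a warped product of a level set of $u$ over an interval, with warping profile $\phi(s)$ whose derivative must change sign between the two ends (since $u$ attains both extrema on $\partial M$); a direct computation of $\II$ on such a warped product gives opposite signs on the two ends, contradicting $\II_{\partial M} > 0$ uniformly. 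Alternatively, and perhaps more cleanly, one may bypass the full splitting and directly use the conclusion $H_\mu \equiv 0$ on $\partial M$ together with the identity $H_\mu = H_g - V_\nu$ and the Obata identity applied in a neighborhood of $\partial M$ to contradict $H_g = \operatorname{tr}(\II_{\partial M}) > 0$. The main obstacle is carrying out this rigidity/splitting step rigorously under the weakest hypothesis $\CD(0,0)$, where classical Obata-type theorems stated under $\CD(K,N)$ with finite $N$ do not apply verbatim and have to be adapted.
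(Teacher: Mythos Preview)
Your approach via the Dirichlet problem and a rigidity analysis is substantially different from the paper's, which dispatches the result in two lines by applying the already-proven Colesanti inequality (Theorem~\ref{thm:Colesanti}) to a function $f$ that is locally constant on $\partial M$ with $\int_{\partial M} f \, d\mu_{\partial M} = 0$; since $\nabla_{\partial M} f = 0$, the right-hand side of \eqref{eq:gen-full0} vanishes, yielding the contradiction directly without any maximum principle, Hopf lemma, or Obata-type splitting.

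More importantly, your argument contains a genuine gap. From
\[
0 = \int_M \Gamma_2(u)\,d\mu + \int_{\partial M} H_\mu u_\nu^2 \, d\mu_{\partial M}
\]
you assert that ``both integrals on the right are non-negative.'' The first one is, by $\CD(0,0)$ and $Lu=0$. But nothing in the hypotheses forces $H_\mu \geq 0$: strict local convexity gives $H_g = \mathrm{tr}(\II_{\partial M}) > 0$, yet $H_\mu = H_g - \langle \nabla V, \nu\rangle$ and there is no control on $\langle \nabla V, \nu\rangle$. (For instance, on the Euclidean unit ball with $V(x) = \lambda |x|^2/2$ and $\lambda$ large, $\CD(0,0)$ holds and $\II>0$, but $H_\mu = (n-1) - \lambda < 0$.) So you cannot conclude that each integral vanishes, and in particular you do not obtain $H_\mu \equiv 0$ or $\Gamma_2(u)\equiv 0$. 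The entire rigidity discussion that follows --- Obata-type identities, warped-product splitting, sign analysis of $\II$ at the two ends --- is therefore built on an unestablished premise, and you yourself flag it as the ``main obstacle'' without resolving it. The paper's route avoids all of this machinery by appealing to the boundary Poincar\'e-type inequality it has already set up.
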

\begin{proof}
Otherwise, $\partial M$ has at least two connected components. By constructing a function $f \in C^1(\partial M)$ which is equal to an appropriate constant on each of the components so that $\int_{\partial M} f d\mu_{\partial M} = 0$, we obtain a contradiction to (\ref{eq:gen-full0}). 
\end{proof}
\begin{rem}
Observe that one cannot relax most of the conditions of the theorem. For instance, taking $M$ to be $[0,1] \times T^{n-1}$ with the product metric, where $T^{n-1}$ is the flat $n-1$-dimensional torus, we see that the strict convexity condition cannot be relaxed to $\II_{\partial M}\geq 0$. In addition, taking $M$ to be the submanifold of Hyperbolic space $H$, which in the Poincar\'e model in the open unit-disc in $\Real^n$ is represented by:
\[
M := \set{ x \in \Real^n \; ; \; \abs{x} < 1 ~,~ \abs{x + 10 e_n} < 10.5 ~,~ \abs{x - 10 e_n} < 10.5 } ~,
\]
since $M$ is strictly convex as a subset of Euclidean space, the same holds in $H$, but $\partial M$ has two connected components. Consequently, we see that the $\CD(0,0)$ condition cannot be relaxed to $\CD(-(n-1),0)$ and hence (by scaling the metric) neither to $\CD(-\eps,0)$.  
\end{rem}

\subsection{Mean-Curvature Inequalities}

Setting $f\equiv 1$ in Theorem \ref{thm:Colesanti}, we recover and generalize to the entire range $1/N \in [-\infty,1/n]$ the following recent result of Huang and Ruan \cite[Theorem 1.3]{HuangRuanMeanCurvatureEstimates} for $N \in [n,\infty]$, who generalized the same result obtained by Reilly \cite{ReillyMeanCurvatureEstimate} in the classical Riemannian volume case ($V=0$ and $N=n$).

\begin{cor}[Extending Reilly and Huang--Ruan] \label{cor:HuangRuan1}
Assume that $(M,g,\mu)$ satisfies the $\CD(0,N)$ condition ($1/N \in (-\infty,1/n]$) and that $\II_{\partial M} > 0$ ($M$ is strictly locally-convex). Then:
\begin{equation} \label{eq:HR-1}
\int_{\partial M} H_\mu d\mu_{\partial M} \leq \frac{N-1}{N} \frac{\mu_{\partial M}(\partial M)^2}{\mu(M)} ~.
\end{equation} 
\end{cor}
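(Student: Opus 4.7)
The plan is to simply specialize Theorem \ref{thm:Colesanti} (the Generalized Colesanti Inequality) to the constant test function $f \equiv 1$. Since Theorem \ref{thm:Colesanti} has already been proved in the same section under precisely the hypotheses assumed here ($\CD(0,N)$ with $1/N \in [-\infty,1/n]$ together with strict local-convexity $\II_{\partial M}>0$), and since the corollary further restricts to $1/N \in (-\infty,1/n]$ (so that $N\neq 0$ and the factor $(N-1)/N$ is finite), no additional analytic work is required.

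First I would substitute $f \equiv 1$ into the generalized Colesanti inequality
\[
\int_{\partial M} H_\mu f^2 \, d\mu_{\partial M} - \frac{N-1}{N}\frac{\bigl(\int_{\partial M} f \, d\mu_{\partial M}\bigr)^2}{\mu(M)} \leq \int_{\partial M} \scalar{\II_{\partial M}^{-1}\nabla_{\partial M} f,\nabla_{\partial M} f}\, d\mu_{\partial M}.
\]
Because $f$ is constant on $\partial M$, its induced gradient $\nabla_{\partial M} f$ vanishes identically and the right-hand side is $0$. Meanwhile the left-hand side reduces to
\[
\int_{\partial M} H_\mu \, d\mu_{\partial M} - \frac{N-1}{N}\frac{\mu_{\partial M}(\partial M)^2}{\mu(M)}.
\]
Rearranging gives exactly the claimed inequality (\ref{eq:HR-1}).

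The only mild subtlety worth noting is the role of the excluded case $1/N = -\infty$: here the corollary is stated for $1/N \in (-\infty,1/n]$, so $N \ne 0$ and the coefficient $(N-1)/N = 1 - 1/N$ is a genuine real number (positive throughout the admissible range). In particular no appeal to the convention $-\infty \cdot 0 = 0$ from Lemma \ref{lem:CS} is needed. There is no main obstacle here — the content of the result is entirely packaged inside Theorem \ref{thm:Colesanti}, whose proof (via the generalized Reilly formula applied to the solution of a Neumann Poisson problem with boundary data $u_\nu = f$) has already been carried out; the corollary is a pure specialization, and the extension from $N \in [n,\infty]$ (the range treated by Huang--Ruan \cite{HuangRuanMeanCurvatureEstimates}) to the full range $1/N \in (-\infty,1/n]$ comes for free because Theorem \ref{thm:Colesanti} itself was established in that greater generality.
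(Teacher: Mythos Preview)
Your proof is correct and is exactly the paper's approach: the corollary is obtained by setting $f\equiv 1$ in Theorem~\ref{thm:Colesanti}, making the gradient term vanish and yielding (\ref{eq:HR-1}) directly.
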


Applying Cauchy--Schwarz, it immediately follows that in above setting:
\begin{equation} \label{eq:int-1-H} 
\int_{\partial M} \frac{1}{H_\mu} d\mu_{\partial M} \geq \frac{\mu_{\partial M}(\partial M)^2}{\int_{\partial M} H_\mu d\mu_{\partial M}} \geq \frac{N}{N-1} \mu(M) ~.
\end{equation}
Interestingly, it was shown by A. Ros \cite{RosMeanCurvatureEstimateAndApplication} in the classical non-weighted case, and generalized by Huang and Ruan \cite[Theorem 1.1]{HuangRuanMeanCurvatureEstimates} to the weighted-Riemannian setting for $N \in [n,\infty]$, that it is enough to assume that $M$ is strictly (generalized) mean-convex for the inequality between first and last terms in (\ref{eq:int-1-H}) to hold. We extend this to the entire range $1/N \in (-\infty,1/n]$:
\begin{thm}[Extending Ros and Huang--Ruan] \label{thm:HuangRuan2}
Assume that $(M,g,\mu)$ satisfies the $\CD(0,N)$ condition ($1/N \in (-\infty,1/n]$) and that $H_{\mu} > 0$ ($M$ is strictly generalized mean-convex). Then:
\begin{equation} \label{eq:HR-2}
\int_{\partial M} \frac{1}{H_\mu} d\mu_{\partial M} \geq \frac{N}{N-1} \mu(M) ~.
\end{equation}
\end{thm}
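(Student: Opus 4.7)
The plan is to adapt Ros's original Euclidean argument to the weighted-Riemannian setting. The key idea is to solve a Dirichlet Poisson problem rather than a Neumann one: this way the boundary data is constant, the tangential gradient vanishes on $\partial M$, and the (uncontrolled) second-fundamental-form term in the Reilly identity drops out automatically. No convexity of $\partial M$ is then needed, only the stated mean-convexity $H_\mu > 0$.

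Concretely, I would first invoke the elliptic existence theorem of Subsection 2.4 to produce $u \in \S_D(M)$ solving $Lu = 1$ on $M$ with $u \equiv 0$ on $\partial M$. Integrating by parts immediately gives $\int_{\partial M} u_\nu \, d\mu_{\partial M} = \int_M Lu \, d\mu = \mu(M)$. Next, since $u$ is constant on $\partial M$, the Reilly variant \eqref{Reilly3} applies, and because $\nabla_{\partial M} u = 0$ its $\scalar{\II_{\partial M} \nabla_{\partial M} u, \nabla_{\partial M} u}$ term vanishes, leaving
\[
\mu(M) = \int_M \bigl(\|\nabla^2 u\|^2 + \scalar{\Ric_\mu \nabla u, \nabla u}\bigr) d\mu + \int_{\partial M} H_\mu u_\nu^2 \, d\mu_{\partial M}.
\]
Bakry's inequality (Lemma \ref{lem:CS}) together with the $\CD(0,N)$ hypothesis bounds the integrand from below by $\tfrac{1}{N}(Lu)^2 = \tfrac{1}{N}$ (with the convention $-\infty \cdot 0 = 0$), so integrating yields $\int_{\partial M} H_\mu u_\nu^2 \, d\mu_{\partial M} \leq \tfrac{N-1}{N}\mu(M)$.

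The conclusion then follows from Cauchy--Schwarz in $L^2(\mu_{\partial M})$, which is legitimate since $H_\mu > 0$:
\[
\mu(M)^2 = \Bigl(\int_{\partial M} u_\nu \, d\mu_{\partial M}\Bigr)^2 \leq \int_{\partial M} H_\mu u_\nu^2 \, d\mu_{\partial M} \cdot \int_{\partial M} \frac{1}{H_\mu} d\mu_{\partial M} \leq \frac{N-1}{N}\mu(M) \int_{\partial M} \frac{1}{H_\mu} d\mu_{\partial M},
\]
which after dividing gives $\int_{\partial M} \tfrac{1}{H_\mu} d\mu_{\partial M} \geq \tfrac{N}{N-1}\mu(M)$.

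There is no serious obstacle; the argument is essentially three lines once one has the Dirichlet Reilly variant and the Bakry CD-inequality at hand. The only point demanding a moment of care is the extended range $1/N \in [-\infty,1/n]$: one should check that $\tfrac{N-1}{N}$ and $\tfrac{N}{N-1}$ are both nonnegative throughout the admissible range (they are, since $N \in [n,\infty] \cup (-\infty,0]$ forces numerator and denominator to have the same sign), and that the degenerate endpoint $N=0$ corresponds to the trivial statement $\int_{\partial M} 1/H_\mu \, d\mu_{\partial M} \geq 0$, which is covered by the convention.
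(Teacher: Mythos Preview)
Your proof is correct and follows essentially the same route as the paper: solve the Dirichlet problem $Lu=1$, $u|_{\partial M}=0$, apply the Reilly variant \eqref{Reilly3} (where the $\II_{\partial M}$ term drops out since $\nabla_{\partial M} u = 0$), use the $\CD(0,N)$ condition via Lemma~\ref{lem:CS}, and conclude with Cauchy--Schwarz on $\partial M$. The only cosmetic difference is that the paper phrases the Reilly step as $\int_M (Lu)^2 \geq \tfrac{1}{N}\int_M (Lu)^2 + \int_{\partial M} H_\mu u_\nu^2$ before isolating the boundary term, whereas you compute the interior integral first; the content is identical.
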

 
This is very much related to our dual version of the generalized Colesanti inequality (Theorem \ref{thm:dual-Colesanti}), and in fact both inequalities may be obtained simultaneously from the generalized Reilly formula by invoking the Cauchy--Schwarz inequality in two different ways. In a sense, this explains why we lost the dependence on $N$ in Theorem \ref{thm:dual-Colesanti} and why we lose the dependence on $\rho$ in Theorem \ref{thm:HuangRuan2}. 
The idea for proving Theorem \ref{thm:HuangRuan2} is the same as in \cite{HuangRuanMeanCurvatureEstimates}, but our argument is somewhat more direct.
\begin{proof}[Proof of Theorem \ref{thm:HuangRuan2}]
Let us solve for $u \in \S_0(M)$ the following Dirichlet Poisson equation:
\[
Lu \equiv 1 \text{ on $M$ } ~,~ u \equiv 0\text{ on $\partial M$} ~.
\]
By the generalized Reilly formula (as in (\ref{Reilly3})) and the $\CD(0,N)$ condition: 
\begin{eqnarray*}
\mu(M) = \int_M (Lu)^2 d\mu &=& \int_M \norm{\nabla^2 u}^2 d\mu + \int_M \scalar{\Ric_\mu \; \nabla u, \nabla u} d\mu + \int_{\partial M} H_\mu (u_\nu)^2  d \mu_{\partial M} \\
&\geq & \frac{1}{N} \int_M (Lu)^2 d\mu + \int_{\partial M} H_\mu (u_\nu)^2  d \mu_{\partial M} ~.
\end{eqnarray*}
Coupled with an application of the Cauchy--Schwarz inequality, this yields:
\begin{align*}
\mu(M)^2 & = (\int_M (L u) d\mu)^2 = (\int_{\partial M} u_\nu d\mu_{\partial M})^2 \\
& \leq \int_{\partial M} H_\mu (u_\nu)^2 d\mu_{\partial M} \int_{\partial M} \frac{1}{H_\mu} d\mu_{\partial M} \leq \frac{N-1}{N} \mu(M) \int_{\partial M} \frac{1}{H_\mu} d\mu_{\partial M} ~,
\end{align*}
and the assertion follows. 
\end{proof}

\begin{rem} \label{rem:HR-equality}
It may be shown by analyzing the cases of equality in all of the above used inequalities, that when $N \in [n,\infty]$, 
equality occurs in (\ref{eq:HR-1}) or (\ref{eq:HR-2}) if and only if $M$ is isometric to a Euclidean ball and $V$ is constant. See \cite{ReillyMeanCurvatureEstimate,RosMeanCurvatureEstimateAndApplication,HuangRuanMeanCurvatureEstimates} for more details. 
\end{rem}

\subsection{Spectral-Gap Estimates on $\partial M$}

Next, we recall a result of Xia \cite{XiaSpectralGapOnConvexBoundary} in the classical non-weighted Riemannian setting ($V=0$), stating that when $\Ric_g \geq 0$ on $M$ and $\II_{\partial M}\geq \sigma g|_{\partial M}$ on $\partial M$ with $\sigma > 0$, then:
\begin{equation} \label{eq:Xia}
\Var_{\vol_{\partial M}}(f) \leq \frac{1}{(n-1) \sigma^2} \int_{\partial M} |\nabla_{\partial M} f|^2 d\vol_{\partial M} ~,~ \forall f \in C^1(\partial M) ~.
\end{equation}
In other words, the spectral-gap of $-L_{\partial M}$ on $(\partial M,g|_{\partial M},\vol_{\partial M})$ away from the trivial zero eigenvalue is at least $(n-1) \sigma^2$. 
Since in that case we have $H_g = tr(\II_{\partial M})\geq (n-1) \sigma$, our next result, which is an immediate corollary of Theorem \ref{thm:Colesanti} applied to $f$ with $\int_{\partial M} f \ d \mu_{\partial M}=0$, is both a refinement and an extension of Xia's estimate to the more general $\CD(0,0)$ condition in the weighted Riemannian setting:
\begin{cor} \label{cor:IIH-Poincare}
Assume that $(M,g,\mu)$ satisfies $\CD(0,0)$, and that $\II_{\partial M} \ge \sigma g|_{\partial M}$, $H_\mu \ge \xi$ on $\partial M$ with $\sigma,\xi > 0$. Then:
\[\Var_{\mu_{\partial M}}(f) \leq \frac{1}{\sigma \xi} \int_{\partial M} |\nabla_{\partial M} f|^2 d \mu_{\partial M} ~,~ \forall f \in C^1(\partial M) ~.
\]\end{cor}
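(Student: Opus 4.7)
The plan is to derive this directly from the Generalized Colesanti Inequality (Theorem \ref{thm:Colesanti}) applied in the endpoint case $N=0$ (i.e.\ $1/N = -\infty$), exploiting the stated convention that $-\infty \cdot 0 = 0$ whenever the relevant zero-mean condition holds.

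First, I would reduce to the zero-mean case by replacing $f$ with $\tilde f := f - \dashint_{\partial M} f \, d\mu_{\partial M}$. This reduction is cost-free: $\nabla_{\partial M} \tilde f = \nabla_{\partial M} f$, and $\int_{\partial M} \tilde f^2 \, d\mu_{\partial M} = \Var_{\mu_{\partial M}}(f)$. Moreover $\int_{\partial M} \tilde f \, d\mu_{\partial M} = 0$, which is precisely the hypothesis that makes the $N=0$ version of Theorem \ref{thm:Colesanti} meaningful (the second term on the left-hand side of (\ref{eq:gen-full0}) is $-\infty \cdot 0 = 0$). Since $\CD(0,0)$ is assumed and $\II_{\partial M} \geq \sigma g|_{\partial M} > 0$ in particular ensures strict local convexity, Theorem \ref{thm:Colesanti} applies and yields
\[
\int_{\partial M} H_\mu \tilde f^2 \, d\mu_{\partial M} \;\leq\; \int_{\partial M} \scalar{\II_{\partial M}^{-1} \nabla_{\partial M} \tilde f, \nabla_{\partial M} \tilde f} \, d\mu_{\partial M}.
\]

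Next, I would bound both sides by the scalar hypotheses. From $H_\mu \geq \xi$ on $\partial M$ the left-hand side is at least $\xi \int_{\partial M} \tilde f^2 \, d\mu_{\partial M}$. From $\II_{\partial M} \geq \sigma g|_{\partial M}$ as a $2$-tensor, the reciprocal satisfies $\II_{\partial M}^{-1} \leq \sigma^{-1} g|_{\partial M}$ in the sense of positive-definite $2$-tensors on each tangent space of $\partial M$, so the right-hand side is bounded above by $\sigma^{-1} \int_{\partial M} |\nabla_{\partial M} \tilde f|^2 \, d\mu_{\partial M}$. Combining,
\[
\xi \int_{\partial M} \tilde f^2 \, d\mu_{\partial M} \;\leq\; \frac{1}{\sigma} \int_{\partial M} |\nabla_{\partial M} \tilde f|^2 \, d\mu_{\partial M},
\]
which rearranges to $\Var_{\mu_{\partial M}}(f) \leq \frac{1}{\sigma \xi} \int_{\partial M} |\nabla_{\partial M} f|^2 \, d\mu_{\partial M}$, as claimed.

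There is essentially no obstacle: every step is a one-line consequence of a previously established result or a trivial scalar bound. The only conceptual point worth highlighting is the use of the $N=0$ endpoint of Theorem \ref{thm:Colesanti}, which requires the zero-mean reduction to activate the $-\infty \cdot 0 = 0$ convention; this is exactly the reason the bound depends on $\sigma \xi$ without any additional dimensional factor, refining the classical estimate (\ref{eq:Xia}) of Xia in the non-weighted Riemannian case (where $H_g \geq (n-1)\sigma$ would give the factor $(n-1)\sigma^2$).
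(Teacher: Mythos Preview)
Your proof is correct and takes essentially the same approach as the paper, which simply states that the result is an immediate corollary of Theorem \ref{thm:Colesanti} applied to $f$ with $\int_{\partial M} f \, d\mu_{\partial M} = 0$. Your writeup merely makes explicit the zero-mean reduction and the scalar bounds $H_\mu \geq \xi$, $\II_{\partial M}^{-1} \leq \sigma^{-1} g|_{\partial M}$ that the paper leaves implicit.
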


In the Euclidean setting, and more generally when all sectional curvatures are non-negative, an improved bound will be obtained in the next section. 
The next result extends the previous one to the $\CD(\rho,0)$ setting:

\begin{cor} \label{cor:IIHRho-Poincare}
Assume that $(M,g,\mu)$ satisfies $\CD(\rho,0)$, $\rho \geq 0$, that $\partial M$ is $C^{2,\alpha}$ smooth and that $\II_{\partial M} \ge \sigma g|_{\partial M}$, $H_\mu \ge \xi$ on $\partial M$ with $\sigma,\xi > 0$. Then:
\[\lambda_1 \Var_{\mu_{\partial M}}(f) \leq \int_{\partial M} |\nabla_{\partial M} f|^2 d \mu_{\partial M} ~,~ \forall f \in C^1(\partial M) ~,
\]with:
\[
\lambda_1 \geq \frac{\rho + a + \sqrt{2 a \rho + a^2}}{2} \geq \max\brac{a, \frac{\rho}{2}}  ~,~ a := \sigma \xi ~.
\]
\end{cor}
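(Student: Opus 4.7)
The plan is to repeat the Reilly-formula argument behind Theorem~\ref{thm:Colesanti}, but with two refinements: (i) retain the term $\rho\int_M|\nabla u|^2\,d\mu$ that the $\CD(\rho,0)$ hypothesis contributes via Lemma~\ref{lem:CS}, rather than discarding it; and (ii) avoid the Cauchy--Schwarz step $2\sscalar{\nabla u_\nu,\nabla u}\le\sscalar{\II\,\nabla u,\nabla u}+\sscalar{\II^{-1}\nabla u_\nu,\nabla u_\nu}$ used in the proof of Theorem~\ref{thm:Colesanti}, which collapses the tangential Dirichlet energy of $u$ on $\partial M$. The payoff is a three-variable inequality relating $\int_M|\nabla u|^2\,d\mu$, $\int_{\partial M}|\nabla_{\partial M}u|^2\,d\mu_{\partial M}$, and $\int_{\partial M}f^2\,d\mu_{\partial M}$, which can be closed into a quadratic constraint on $\lambda_1$.

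Concretely, I would take $f \in C^2(\partial M)$ to be a first eigenfunction, $-L_{\partial M}f=\lambda_1 f$, and solve for $u\in\S_N(M)$ with $Lu\equiv 0$ on $M$ and $u_\nu=f$ on $\partial M$ (compatibility holds since $\int_{\partial M}f\,d\mu_{\partial M}=0$); adding a constant, I normalize $\int_{\partial M}u\,d\mu_{\partial M}=0$ as well. Plugging into \eqref{Reilly}, applying $\CD(\rho,0)$ to the $\Gamma_2$ term (using the convention $-\infty\cdot 0=0$ of Lemma~\ref{lem:CS}, which is legitimate since $Lu=0$), and inserting the pointwise lower bounds $\II_{\partial M}\ge \sigma g|_{\partial M}$ and $H_\mu\ge\xi$, one arrives at
\[
2\int_{\partial M}\scalar{\nabla_{\partial M}f,\nabla_{\partial M}u}\,d\mu_{\partial M} \;\geq\; \rho A+\xi B+\sigma C,
\]
where $A:=\int_M|\nabla u|^2\,d\mu$, $B:=\int_{\partial M}f^2\,d\mu_{\partial M}$, and $C:=\int_{\partial M}|\nabla_{\partial M}u|^2\,d\mu_{\partial M}$. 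The left-hand side simplifies by integrating by parts on $\partial M$ using $-L_{\partial M}f=\lambda_1 f$, followed by Green's identity $\int_M|\nabla u|^2\,d\mu = \int_{\partial M}u\cdot u_\nu\,d\mu_{\partial M} = \int_{\partial M}uf\,d\mu_{\partial M}$, giving $\int_{\partial M}\scalar{\nabla_{\partial M}f,\nabla_{\partial M}u}\,d\mu_{\partial M}=\lambda_1 A$. Hence $(2\lambda_1-\rho)A\ge\xi B+\sigma C$, and positivity of the right-hand side already forces $\lambda_1>\rho/2$.

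To close the system, I would use Cauchy--Schwarz together with the Poincar\'e inequality on $\partial M$, which applied to the zero-mean $u|_{\partial M}$ is tautological from the definition of $\lambda_1$ as the spectral gap: $A=\int_{\partial M}uf\,d\mu_{\partial M}\le \sqrt{\int_{\partial M}u^2\,d\mu_{\partial M}}\cdot\sqrt{B}\le\sqrt{C/\lambda_1}\cdot\sqrt{B}$, whence $C\ge\lambda_1 A^2/B$. Substituting this back and multiplying through by $B$ gives $\sigma\lambda_1(A/B)^2-(2\lambda_1-\rho)(A/B)+\xi\le 0$, whose discriminant in $A/B$ must be nonnegative: $(2\lambda_1-\rho)^2\ge 4a\lambda_1$. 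Using $\lambda_1>\rho/2$ to extract the positive square root gives $\lambda_1-\rho/2\ge\sqrt{a\lambda_1}$; solving this as a quadratic in $y=\sqrt{\lambda_1}$ yields $\lambda_1\ge\tfrac12(\rho+a+\sqrt{2a\rho+a^2})$, and the elementary bound $\sqrt{2a\rho+a^2}\ge a$ then furnishes the stated $\lambda_1\ge\max(a,\rho/2)$.

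The main conceptual subtlety is the apparent circularity of using a Poincar\'e inequality on $\partial M$ with constant $\lambda_1$ to bound $\lambda_1$ itself; this is not an actual obstacle because the estimate $\int_{\partial M}u^2\,d\mu_{\partial M}\le\lambda_1^{-1}\int_{\partial M}|\nabla_{\partial M}u|^2\,d\mu_{\partial M}$ for mean-zero $u$ is a direct consequence of $\lambda_1$ being defined as the bottom of the nonzero spectrum of $-L_{\partial M}$. A minor bookkeeping point is to verify $A>0$, which holds because $A=0$ would force $u$ constant and hence $f=u_\nu\equiv 0$, contradicting $\lambda_1>0$.
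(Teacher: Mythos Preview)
Your argument is correct, and it takes a genuinely different route from the paper's. The paper applies the \emph{dual} Colesanti inequality (Theorem~\ref{thm:dual-Colesanti}), which is proved via the \emph{Dirichlet} problem $Lu=0$, $u|_{\partial M}=f$; plugging in the first eigenfunction of $-L_{\partial M}$ together with the bounds $\II_{\partial M}\ge\sigma g_0$, $H_\mu\ge\xi$ yields $a\lambda_1\le(\lambda_1-\rho_1/2)^2$ for every $\rho_1\in[0,\rho]$, and the full range of $\rho_1$ is then used (via a continuity argument starting from $\rho_1=0$, where one already knows $\lambda_1\ge a$ by Corollary~\ref{cor:IIH-Poincare}) to select the correct square root and conclude. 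You instead solve the \emph{Neumann} problem $Lu=0$, $u_\nu=f$, keep the cross term $2\int_{\partial M}\sscalar{\nabla_{\partial M}u_\nu,\nabla_{\partial M}u}$ in the Reilly formula rather than Cauchy--Schwarzing it away, and close the resulting three-term inequality by invoking the spectral-gap inequality on $\partial M$ for the (mean-zero) trace $u|_{\partial M}$. Both routes land on the identical quadratic constraint $\lambda_1^2-(\rho+a)\lambda_1+\rho^2/4\ge 0$.

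What your approach buys: the strict inequality $\lambda_1>\rho/2$ drops out immediately from $(2\lambda_1-\rho)A\ge\xi B>0$, so you avoid the paper's $\rho_1$-sweep and continuity step to determine the sign of $\lambda_1-\rho/2$. The paper's approach, in turn, is more modular (it reuses Theorem~\ref{thm:dual-Colesanti} as a black box) and sidesteps the self-referential use of the boundary Poincar\'e inequality, which---as you correctly note---is not circular, but does require the reader to pause.
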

\begin{proof}
Let $u \in C^2(\partial M)$ denote the first non-trivial eigenfunction of $-L_{\partial M}$, satisfying $-L_{\partial M} u = \lambda_1 u$ with $\lambda_1 > 0$ the spectral-gap  (we already know it is positive by Corollary \ref{cor:IIH-Poincare}). 
Plugging the estimates $\II_{\partial M} \ge \sigma g|_{\partial M}$, $H_\mu \ge \xi$ into the dual generalized Colesanti inequality (Theorem \ref{thm:dual-Colesanti}) and applying it to the function $u$, we obtain:
\[
\sigma \lambda_1 \int_{\partial M} u^2 d\mu_{\partial M} \leq \frac{1}{\xi} \int_{\partial M} (- \lambda_1 u  + \frac{\rho_1}{2} u)^2 d\mu_{\partial M} ~,~ \forall \rho_1 \in [0,\rho] ~.
\]
Opening the brackets, this yields:
\[
\lambda_1^2 -  (\rho_1 + \xi \sigma) \lambda_1 + \frac{\rho_1^2}{4} \geq 0 ~,~ \forall \rho_1 \in [0,\rho] ~.
\] 
The assertion then follows by using all values of $\rho_1 \in [0,\rho]$.

\end{proof}

In the next section, we extend our spectral-gap estimates on $(\partial M,g|_{\partial M},\mu_{\partial M})$ for the case of varying lower bounds $\sigma$ and $\xi$.

\section{Boundaries of $\CD(\rho,N)$ weighted-manifolds} \label{sec:boundaries}

Throughout this section, we assume that $n \geq 3$. Recall that $\partial M$ is assumed to be $C^2$ smooth, and consequently so is the induced Riemannian metric on $\partial M$. 

\subsection{Curvature-Dimension of the Boundary}

Denote the full Riemann curvature $4$-tensor on $(M,g)$ by $\rm{R}^M_g$, and let $\Ric^{\partial M}_{\mu_{\partial M}}$ denote the weighted Ricci tensor on $(\partial M,g|_{\partial M}, \mu_{\partial M})$. 

\begin{lem} \label{lem:boundary-Ric}
Set $g_0 := g|_{\partial M}$ the induced metric on $\partial M$. Then:
\[
\Ric^{\partial M}_{\mu_{\partial M}} = (\Ric^M_{\mu} - \rm{R}^M_g(\cdot,\nu,\cdot,\nu))|_{T \partial M}+ (H_\mu g_0 - \II_{\partial M}) \II_{\partial M} ~.
\]
\end{lem}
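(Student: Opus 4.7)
The plan is to derive the identity by pairing the classical Gauss equation of the isometric immersion $\partial M \hookrightarrow M$ with the relation between the ambient Hessian of $V$ and the boundary Hessian of $V|_{\partial M}$. With $g_0 := g|_{\partial M}$ as in the statement, the very definition of the weighted Ricci tensor gives
\[
\Ric^{\partial M}_{\mu_{\partial M}} = \Ric^{\partial M}_{g_0} + \nabla^2_{g_0}(V|_{\partial M}) \quad\text{and}\quad \Ric^M_\mu = \Ric^M_g + \nabla^2_g V ~,
\]
so the task splits into computing $\Ric^{\partial M}_{g_0}$ in terms of $\Ric^M_g$, $\II_{\partial M}$ and a normal-direction curvature term, and separately computing $\nabla^2_{g_0}(V|_{\partial M})$ in terms of $\nabla^2_g V|_{T\partial M}$ plus an $\II_{\partial M}$-correction.

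First I would invoke the Gauss equation of the immersion: for $X, Y, Z, W \in T_x \partial M$,
\[
R^{\partial M}(X,Y,Z,W) = R^M(X,Y,Z,W) + \II_{\partial M}(Y,Z)\II_{\partial M}(X,W) - \II_{\partial M}(X,Z)\II_{\partial M}(Y,W) ~,
\]
and trace it in an orthonormal frame $\{e_1,\dots,e_{n-1}\}$ of $T_x\partial M$ completed to a frame of $T_x M$ by adjoining $\nu$. The ambient-curvature contribution yields $\Ric^M_g(X,Y)$ minus the sectional-type term from the omitted normal direction, i.e. (up to sign convention) $\rm{R}^M_g(X,\nu,Y,\nu)$, while the $\II$-quadratic part produces exactly $H_g \II_{\partial M}(X,Y) - \II_{\partial M}^2(X,Y)$, where $\II_{\partial M}^2$ denotes $\II_{\partial M}$ composed with itself as an endomorphism via $g_0$. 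Thus
\[
\Ric^{\partial M}_{g_0} = \brac{\Ric^M_g - \rm{R}^M_g(\cdot,\nu,\cdot,\nu)}|_{T\partial M} + H_g \II_{\partial M} - \II_{\partial M}^2 ~.
\]

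Next I would use the Gauss formula $\nabla^M_X Y = \nabla^{\partial M}_X Y - \II_{\partial M}(X,Y)\nu$ for tangential vector fields $X, Y$ on $\partial M$ (consistent with the paper's sign convention $\II(X,Y) = \scalar{\nabla_X\nu,Y}$, as follows by differentiating $\scalar{Y,\nu}=0$). Applying both sides to $V$ yields immediately
\[
\nabla^2_{g_0}(V|_{\partial M})(X,Y) = \nabla^2_g V(X,Y) - V_\nu \II_{\partial M}(X,Y) ~,\quad X,Y \in T_x\partial M ~.
\]
Summing the two displays and using $H_\mu = H_g - V_\nu$, the $-V_\nu \II_{\partial M}$ term combines with $H_g \II_{\partial M}$ to give $H_\mu \II_{\partial M}$, so that the right-hand side becomes
$\brac{\Ric^M_\mu - \rm{R}^M_g(\cdot,\nu,\cdot,\nu)}|_{T\partial M} + H_\mu \II_{\partial M} - \II_{\partial M}^2
= \brac{\Ric^M_\mu - \rm{R}^M_g(\cdot,\nu,\cdot,\nu)}|_{T\partial M} + (H_\mu g_0 - \II_{\partial M})\II_{\partial M}$,
which is the asserted formula. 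The whole derivation is routine submanifold-geometry bookkeeping; the only step requiring any care is keeping the sign conventions for the Riemann-tensor contraction $\rm{R}^M_g(\cdot,\nu,\cdot,\nu)$ consistent with those implicit in the chosen definition of $\II_{\partial M}$, but once these are pinned down the two building blocks merge mechanically into the stated expression.
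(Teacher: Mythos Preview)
Your proof is correct and follows exactly the same route as the paper's: trace the Gauss equation to obtain $\Ric^{\partial M}_{g_0} = (\Ric^M_g - \rm{R}^M_g(\cdot,\nu,\cdot,\nu))|_{T\partial M} + (H_g g_0 - \II_{\partial M})\II_{\partial M}$, compute $\nabla^2_{g_0} V = \nabla^2_g V|_{T\partial M} - V_\nu\,\II_{\partial M}$ via the Gauss formula for the connection, and add using $H_\mu = H_g - V_\nu$. One cosmetic point: the $\II$-quadratic terms in your displayed Gauss equation carry the opposite sign from the paper's convention, though the traced result you then assert is the correct one.
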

\begin{proof}
Let $e_1,\ldots,e_n$ denote an orthonormal frame of vector fields in $M$ so that $e_n$ coincides on $\partial M$ with the outer normal $\nu$. The Gauss formula asserts that for any $i,j,k,l \in \set{1,\ldots,n-1}$:
\[
\rm{R}^{\partial M}_{g_0}(e_i,e_j,e_k,e_l) = \rm{R}^{M}_{g}(e_i,e_j,e_k,e_l) + \II_{\partial M}(e_i,e_k) \II_{\partial M}(e_j,e_l) - \II_{\partial M}(e_j,e_k) \II_{\partial M}(e_i,e_l)  ~.
\]
Contracting by applying $g_0^{j,l}$ and using the orthogonality, we obtain:
\begin{equation} \label{eq:calc1}
\Ric^{\partial M}_{g_0} = (\Ric^M_g - \rm{R}^M_g(\cdot,\nu,\cdot,\nu))|_{T \partial M} + (H_g g_0 - \II_{\partial M}) \II_{\partial M} ~.
\end{equation}
In addition we have:
\begin{eqnarray*}
\nabla^2_{g_0} V(e_i,e_j) &=& e_i (e_j(V)) - ((\nabla_{\partial M})_{e_i} e_j)(V) \\
&= & e_i (e_j(V)) - ((\nabla_{M})_{e_i} e_j)(V) - \II_{\partial M}(e_i,e_j) e_n(V) \\
& = & \nabla^2_g V(e_i,e_j) - \II_{\partial M}(e_i,e_j) \; g(\nabla V , \nu) ~.
\end{eqnarray*}
In other words:
\begin{equation} \label{eq:calc2}
\nabla^2_{g_0} V = \nabla^2_g V|_{T \partial M} - \II_{\partial M} g(\nabla V, \nu) ~.
\end{equation}
Adding (\ref{eq:calc1}) and (\ref{eq:calc2}) and using that $H_\mu = H_g - g(\nabla V,\nu)$, the assertion follows. 
\end{proof}

\begin{cor} \label{cor:boundary-CD}
Assume that $0 \leq \II_{\partial M} \leq H_\mu g_0$ and $\rm{R}^M_g(\cdot,\nu,\cdot,\nu) \leq \kappa g_0$ as $2$-tensors on $\partial M$. If $(M,g,\mu)$ satisfies $\CD(\rho,N)$ then $(\partial M ,g_0 ,\mu_{\partial M})$ satisfies $\CD(\rho-\kappa,N-1)$. 
\end{cor}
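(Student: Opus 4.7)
The strategy is to reduce Corollary \ref{cor:boundary-CD} to Lemma \ref{lem:boundary-Ric} together with a simple algebraic manipulation converting the $\infty$-dimensional generalized Ricci tensors appearing there into their $N$-dimensional (resp. $(N-1)$-dimensional) counterparts. Let me restrict all tensors on $\partial M$ throughout; all inequalities below are meant as symmetric $2$-tensors on $T \partial M$, and I abbreviate $W := dV|_{T \partial M}$.

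First, recall that $\Ric^M_\mu$ and $\Ric^{\partial M}_{\mu_{\partial M}}$ in Lemma \ref{lem:boundary-Ric} denote the $\infty$-versions by convention, i.e. $\Ric^M_\mu = \Ric^M_g + \nabla^2_g V$ and $\Ric^{\partial M}_{\mu_{\partial M}} = \Ric^{\partial M}_{g_0} + \nabla^2_{g_0} V$. By the definition (\ref{eq:Ric-def}) of $\Ric_{\mu,N}$, and using that the codimension of $\partial M$ in $M$ equals the ``codimension'' of $N-1$ in $N$ (both equal to $1$ in the appropriate sense, i.e. $(N-1)-(n-1) = N-n$), I have the two identities
\begin{equation*}
\Ric^M_{\mu,N}|_{T \partial M} = \Ric^M_{\mu}|_{T \partial M} - \frac{1}{N-n} W \otimes W, \qquad \Ric^{\partial M}_{\mu_{\partial M}, N-1} = \Ric^{\partial M}_{\mu_{\partial M}} - \frac{1}{N-n} W \otimes W.
\end{equation*}
Subtracting, the $W \otimes W$ terms cancel, and plugging into Lemma \ref{lem:boundary-Ric} yields the clean identity
\begin{equation*}
\Ric^{\partial M}_{\mu_{\partial M}, N-1} = \bigl(\Ric^M_{\mu,N} - \rm{R}^M_g(\cdot,\nu,\cdot,\nu)\bigr)\big|_{T \partial M} + (H_\mu g_0 - \II_{\partial M}) \, \II_{\partial M}.
\end{equation*}

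It now remains to lower bound each of the two terms on the right. By the $\CD(\rho,N)$ assumption on $(M,g,\mu)$, the first term is bounded below by $(\rho - \kappa) g_0$ using the sectional curvature bound $\rm{R}^M_g(\cdot,\nu,\cdot,\nu) \leq \kappa g_0$. For the second term, since $\II_{\partial M}$ and $H_\mu g_0 - \II_{\partial M}$ commute (the latter being a scalar multiple of the identity minus $\II_{\partial M}$), the product $(H_\mu g_0 - \II_{\partial M}) \II_{\partial M}$ is itself a symmetric $2$-tensor; diagonalizing $\II_{\partial M}$ and using the hypothesis $0 \leq \II_{\partial M} \leq H_\mu g_0$, each of its eigenvalues has the form $\lambda_i(H_\mu - \lambda_i) \geq 0$, so this term is non-negative. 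Adding the two bounds gives $\Ric^{\partial M}_{\mu_{\partial M}, N-1} \geq (\rho - \kappa) g_0$, which is precisely the $\CD(\rho-\kappa, N-1)$ condition.

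The only place requiring mild care is the symmetry and non-negativity of $(H_\mu g_0 - \II_{\partial M}) \II_{\partial M}$, since composition of non-commuting symmetric endomorphisms generally fails to be symmetric; but here commutativity is built in, so no obstacle actually arises. The argument works uniformly in $1/N \in [-\infty, 1/n]$ with the usual convention $1/N = 0$ handled as a limit, since $\frac{1}{N-n} W \otimes W$ disappears from both sides simultaneously.
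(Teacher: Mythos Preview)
Your proof is correct and follows essentially the same approach as the paper's own proof: both invoke Lemma \ref{lem:boundary-Ric}, exploit the key observation that $(N-1)-(n-1) = N-n$ so that the dimensional correction terms on $M$ and $\partial M$ coincide, verify non-negativity of $(H_\mu g_0 - \II_{\partial M}) \II_{\partial M}$ via commutativity, and conclude from the $\CD(\rho,N)$ and sectional curvature hypotheses. The only cosmetic difference is that the paper drops the non-negative term immediately to write an inequality chain, whereas you first record the identity and then bound each term separately.
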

\begin{proof}
The first assumption ensures that:
\[
(H_\mu g_0 - \II_{\partial M}) \II_{\partial M} \geq 0 ~,
\]
since the product of two commuting positive semi-definite matrices is itself positive semi-definite. 
It follows by Lemma \ref{lem:boundary-Ric} that:
\begin{eqnarray*}
& & \Ric^{\partial M}_{\mu_{\partial M},N-1} = \Ric^{\partial M}_{\mu_{\partial M}} - \frac{1}{N-1-(n-1)} dV \otimes dV |_{T \partial M}\\
&\geq & (\Ric^M_\mu - \rm{R}^M_g(\cdot,\nu,\cdot,\nu) - \frac{1}{N-n} dV \otimes dV) |_{T \partial M} = (\Ric^M_{\mu,N} - \rm{R}^M_g(\cdot,\nu,\cdot,\nu))|_{T \partial M} ~.
\end{eqnarray*}
The assertion follows from our assumption that $\Ric^M_{\mu,N} \geq \rho g$ and $\rm{R}^M_g(\cdot,\nu,\cdot,\nu)|_{T \partial M} \leq \kappa g_0$. 
\end{proof}

An immediate modification of the above argument yields:
\begin{cor}
Assume that $(M,g,\mu)$ satisfies $\CD(\rho,N)$ and that $\rm{R}^M_g(\cdot,\nu,\cdot,\nu) \leq \kappa g_0$ as $2$-tensors on $\partial M$. If $\sigma_1 g_0 \leq \II_{\partial M} \leq \sigma_2 g_0$, for some functions $\sigma_1,\sigma_2 : \partial M \rightarrow \Real$, then:
\[
\Ric^{\partial M}_{\mu_{\partial M},N-1} \geq (\rho -\kappa + \min(\sigma_1(H_\mu -\sigma_1) , \sigma_2 (H_\mu -\sigma_2))) g_0 ~.
\] 
In particular, if $H_\mu \geq \xi $ and $\sigma g_0 \leq \II_{\partial M} \leq (H_\mu -\sigma) g_0$ for some constants $\xi,\sigma \in \Real$, then:
\[
\Ric^{\partial M}_{\mu_{\partial M},N-1} \geq (\rho - \kappa + \sigma (\xi - \sigma)) g_0 ~.
\]
\end{cor}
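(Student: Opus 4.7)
The plan is to retrace the argument of the preceding Corollary, but keep careful track of how the quantitative bounds $\sigma_1 g_0 \le \II_{\partial M} \le \sigma_2 g_0$ propagate through the term $(H_\mu g_0 - \II_{\partial M}) \II_{\partial M}$ that Lemma \ref{lem:boundary-Ric} isolates. First I would write, exactly as in the proof of Corollary \ref{cor:boundary-CD},
\[
\Ric^{\partial M}_{\mu_{\partial M},N-1} = (\Ric^M_{\mu,N} - \rm{R}^M_g(\cdot,\nu,\cdot,\nu))|_{T \partial M} + (H_\mu g_0 - \II_{\partial M}) \II_{\partial M} ,
\]
by using Lemma \ref{lem:boundary-Ric} and then subtracting $\frac{1}{N-n}\,dV \otimes dV|_{T\partial M}$ from both sides. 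The $\CD(\rho,N)$ hypothesis together with $\rm{R}^M_g(\cdot,\nu,\cdot,\nu) \le \kappa g_0$ immediately bounds the first summand below by $(\rho-\kappa)\,g_0$.

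The new ingredient, and the only real departure from the proof of Corollary \ref{cor:boundary-CD}, is a quantitative lower bound on the second summand. Since $g_0$ acts as the identity on $T\partial M$, the factors $H_\mu g_0 - \II_{\partial M}$ and $\II_{\partial M}$ commute and are simultaneously diagonalized; hence their product is a symmetric $2$-tensor whose eigenvalues in an orthonormal eigenbasis of $\II_{\partial M}$ are exactly $\lambda_i (H_\mu - \lambda_i)$, where $\lambda_i$ are the eigenvalues of $\II_{\partial M}$. I would then observe that the scalar function $t \mapsto t(H_\mu - t) = H_\mu t - t^2$ is a concave quadratic in $t$, so its minimum over any closed interval $[\sigma_1(x), \sigma_2(x)]$ is attained at one of the endpoints. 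Since by hypothesis $\lambda_i(x) \in [\sigma_1(x), \sigma_2(x)]$ for every $i$, this yields
\[
(H_\mu g_0 - \II_{\partial M}) \II_{\partial M} \ \ge\ \min\bigl(\sigma_1 (H_\mu - \sigma_1),\ \sigma_2 (H_\mu - \sigma_2)\bigr)\, g_0
\]
pointwise on $\partial M$, and combining with the earlier bound proves the first asserted inequality.

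For the particular case I would specialize $\sigma_1 \equiv \sigma$ and $\sigma_2(x) := H_\mu(x) - \sigma$; then the two endpoint values $\sigma_i(H_\mu - \sigma_i)$ coincide and both equal $\sigma(H_\mu - \sigma)$, so the concavity step is vacuous and the minimum is simply this common value. The hypothesis $H_\mu \ge \xi$, together with $\sigma \ge 0$ (implicit in $\sigma g_0 \le \II_{\partial M}$ being the natural lower bound of interest), gives $\sigma(H_\mu - \sigma) \ge \sigma(\xi - \sigma)$, completing the second claim.

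There is really no serious obstacle here: the whole content is the concavity/endpoint analysis of the eigenvalues of $(H_\mu g_0 - \II_{\partial M}) \II_{\partial M}$, which upgrades the qualitative non-negativity used in Corollary \ref{cor:boundary-CD} into the quantitative bound required; all remaining steps are a direct transcription of the preceding proof.
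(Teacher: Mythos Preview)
Your proof is correct and is exactly the ``immediate modification'' the paper alludes to: the paper gives no separate argument for this corollary, and your eigenvalue/concavity analysis of $(H_\mu g_0 - \II_{\partial M})\II_{\partial M}$ is precisely the quantitative upgrade of the positivity step in Corollary~\ref{cor:boundary-CD} that is needed. Your caveat that the final specialization $\sigma(H_\mu-\sigma)\ge\sigma(\xi-\sigma)$ tacitly uses $\sigma\ge 0$ is a valid observation (the paper later always applies this with $\sigma\ge 0$, e.g.\ in Proposition~\ref{prop:CD-boundary}).
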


When $\II_{\partial M} \geq \sigma g_0$ with $\sigma \geq 0$, we obviously have $\II_{\partial M} \leq (H_g - (n-2) \sigma) g_0$ and $H_g \geq (n-1) \sigma$.
In addition if $\scalar{\nabla V,\nu} \leq 0$ on $\partial M$, we have $H_g \leq H_\mu$. Consequently, we obtain:
\begin{equation} \label{eq:boundary-Ric-estimate}
(H_\mu g_0 - \II_{\partial M}) \II_{\partial M} \geq \sigma (H_g - \sigma) g_0 \geq (n-2) \sigma^2 g_0 ~.
\end{equation}
This is summarized in the following:
\begin{prop} \label{prop:CD-boundary}
Assume that $(M^n,g,\mu)$ satisfies $\CD(\rho,N)$, $\II_{\partial M} \geq \sigma g_0$ with $\sigma \geq 0$, $\scalar{\nabla V,\nu} \leq 0$ and $R^M_g(\cdot,\nu,\cdot,\nu) \leq \kappa g_0$ on $T \partial M$.
Then $(\partial M,g_0,\mu_{\partial M})$ satisfies $\CD(\rho_0,N-1)$ with:
\[
\rho_0 = \rho - \kappa +(n-2)\sigma^2 .
\] 
Moreover, if $H_g \geq \xi \; (\geq (n-1) \sigma)$, one may in fact use:
\[
\rho_0 = \rho - \kappa + \sigma (\xi - \sigma) .
\]
\end{prop}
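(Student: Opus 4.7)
The plan is to combine Lemma \ref{lem:boundary-Ric} (exactly as in the proof of Corollary \ref{cor:boundary-CD}) with a pointwise lower bound on the $2$-tensor $(H_\mu g_0 - \II_{\partial M})\II_{\partial M}$ that exploits both the convexity hypothesis $\II_{\partial M} \geq \sigma g_0$ and the sign condition $\scalar{\nabla V,\nu} \leq 0$. Everything reduces to a short algebraic inequality whose validity uses $n \geq 3$.

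First I would repeat the bookkeeping of Corollary \ref{cor:boundary-CD}: subtracting $\frac{1}{N-n}\,dV \otimes dV|_{T\partial M}$ from both sides of Lemma \ref{lem:boundary-Ric} and using $(N-1)-(n-1) = N-n$ gives
\begin{equation*}
\Ric^{\partial M}_{\mu_{\partial M},N-1} = (\Ric^M_{\mu,N} - \rm{R}^M_g(\cdot,\nu,\cdot,\nu))|_{T\partial M} + (H_\mu g_0 - \II_{\partial M})\II_{\partial M}.
\end{equation*}
The hypotheses $\Ric^M_{\mu,N} \geq \rho g$ and $\rm{R}^M_g(\cdot,\nu,\cdot,\nu) \leq \kappa g_0$ dispose of the first summand and contribute $(\rho - \kappa) g_0$. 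What remains is to bound the last summand from below by $\sigma(H_g - \sigma) g_0$ (the strong form) and, in particular, by $(n-2)\sigma^2 g_0$ (the weak form).

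For this pointwise estimate, $\scalar{\nabla V,\nu} \leq 0$ gives $H_\mu \geq H_g$, and since $\II_{\partial M} \geq 0$ commutes with the scalar tensor $g_0$, the difference $(H_\mu g_0 - \II_{\partial M})\II_{\partial M} - (H_g g_0 - \II_{\partial M})\II_{\partial M} = -\scalar{\nabla V,\nu}\,\II_{\partial M}$ is positive semidefinite, reducing us to bounding $(H_g g_0 - \II_{\partial M})\II_{\partial M}$ from below. Diagonalizing $\II_{\partial M}$ in an orthonormal basis of $T\partial M$, its eigenvalues $\lambda_1,\ldots,\lambda_{n-1}$ satisfy $\lambda_i \geq \sigma \geq 0$ and $\sum_j \lambda_j = H_g$, hence each $\lambda_i \leq H_g - (n-2)\sigma$. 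The eigenvalues of the product tensor are $(H_g - \lambda_i)\lambda_i$, and the identity
\begin{equation*}
(H_g - \lambda_i)\lambda_i - \sigma(H_g - \sigma) = (\lambda_i - \sigma)(H_g - \sigma - \lambda_i)
\end{equation*}
shows that each such eigenvalue is at least $\sigma(H_g - \sigma)$: indeed $\lambda_i - \sigma \geq 0$ by convexity, while $H_g - \sigma - \lambda_i \geq (n-3)\sigma \geq 0$ exactly because of the upper bound $\lambda_i \leq H_g - (n-2)\sigma$ together with $n \geq 3$. Substituting either $H_g \geq \xi$ or the crude $H_g \geq (n-1)\sigma$ into the resulting inequality produces the two forms of $\rho_0$ claimed.

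The only mild obstacle is the little algebraic step above: one must notice that the lower bound $\II_{\partial M} \geq \sigma g_0$ automatically forces a matching upper bound $\lambda_i \leq H_g - (n-2)\sigma$ on each eigenvalue, which is precisely what makes the second factor $H_g - \sigma - \lambda_i$ nonnegative when $n \geq 3$. All other ingredients, namely the passage from Lemma \ref{lem:boundary-Ric} to the $(N-1)$-Bakry--\'Emery tensor and the reduction $H_\mu \geq H_g$, are immediate from material already set up in the paper.
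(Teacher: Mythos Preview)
Your proposal is correct and follows essentially the same route as the paper: derive the upper bound $\lambda_i \leq H_g - (n-2)\sigma$ from the lower bound $\II_{\partial M} \geq \sigma g_0$, use $\scalar{\nabla V,\nu} \leq 0$ to replace $H_\mu$ by $H_g$, and then bound the product tensor $(H_g g_0 - \II_{\partial M})\II_{\partial M}$ from below by $\sigma(H_g-\sigma)g_0$. The paper states this last estimate without proof, while you supply the clean factorization $(H_g - \lambda_i)\lambda_i - \sigma(H_g-\sigma) = (\lambda_i - \sigma)(H_g - \sigma - \lambda_i)$ that makes the role of $n \geq 3$ transparent.
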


Observe that this is sharp for the sphere $S^{n-1}$, both as a hypersurface of Euclidean space $\Real^n$, and as a hypersurface in a sphere $R S^{n}$ with radius $R \geq 1$. 

\medskip

We can now apply the known results for weighted-manifolds (without boundary!) satisfying the $\CD(\rho_0,N-1)$ condition to $(\partial M, g_0,\mu_{\partial M})$.

\subsection{log-Sobolev Estimates on $\partial M$}

\medskip

The first estimate is an immediate consequence of the Bakry--\'Emery criterion \cite{BakryEmery} for log-Sobolev inequalities, see \cite{Ledoux-Book} for definitions and more details:
\begin{cor}  \label{cor:LS}
With the same assumptions and notation as in Proposition \ref{prop:CD-boundary} and for $N \in [n,\infty]$, $(\partial M,g_0,\mu_{\partial M})$ satisfies a log-Sobolev inequality with constant:
\[
\lambda_{LS} := \rho_0 \frac{N-1}{N-2} \; ,
\]
assuming that the latter quantity is positive. 
\end{cor}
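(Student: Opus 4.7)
The plan is to directly invoke the Bakry--\'Emery--Ledoux log-Sobolev criterion applied to the weighted Riemannian manifold $(\partial M, g_0, \mu_{\partial M})$. Crucially, although $M$ has a boundary, $\partial M$ itself is a closed (compact, boundaryless) manifold, so the classical boundary-free criterion applies without any complications arising from Dirichlet/Neumann conditions.

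First, I would apply Proposition \ref{prop:CD-boundary} to conclude that $(\partial M, g_0, \mu_{\partial M})$ satisfies the $\CD(\rho_0, N-1)$ condition, with $\rho_0 = \rho - \kappa + (n-2)\sigma^2$. The standing assumptions $n \geq 3$ and $N \in [n,\infty]$ ensure that the effective dimension parameter $N-1 \geq n-1 \geq 2$ lies in the admissible range for the dimension-dependent improvement.

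Second, I would invoke the classical Bakry--\'Emery--Ledoux theorem (see e.g.\ \cite{Ledoux-Book,BGL-Book}): any weighted manifold without boundary satisfying $\CD(\varrho, K)$ with $\varrho > 0$ and $K \in (1,\infty]$ satisfies a logarithmic Sobolev inequality with constant $\varrho \frac{K}{K-1}$ (interpreted as $\varrho$ when $K=\infty$, in accordance with the limiting convention $K/(K-1) \to 1$ noted in Section \ref{sec:prelim}). Substituting $\varrho = \rho_0$ and $K = N-1$ yields the asserted constant $\lambda_{LS} = \rho_0 \frac{N-1}{N-2}$, and the hypothesis that $\lambda_{LS}$ is positive reduces precisely to $\rho_0 > 0$, which is what is needed to apply the criterion.

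The main (and quite minor) obstacle is simply verifying that the Bakry--\'Emery--Ledoux criterion, as stated in the literature for weighted manifolds, is applicable to $L_{\partial M} = L_{(\partial M, g_0, \mu_{\partial M})}$ under the regularity at hand; since $\partial M$ is $C^2$ smooth, $g_0$ is a $C^2$ Riemannian metric on a closed manifold, and $V|_{\partial M}$ is $C^2$, everything fits squarely within the classical framework for $N \in [n,\infty]$, and no further argument is required.
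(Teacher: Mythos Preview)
Your proposal is correct and follows exactly the paper's own argument: the paper simply states that the corollary is ``an immediate consequence of the Bakry--\'Emery criterion \cite{BakryEmery} for log-Sobolev inequalities'' applied after Proposition \ref{prop:CD-boundary} establishes the $\CD(\rho_0,N-1)$ condition on the closed manifold $(\partial M,g_0,\mu_{\partial M})$. Your added remarks about $\partial M$ being boundaryless and about the regularity of $g_0$ and $V|_{\partial M}$ are reasonable clarifications that the paper leaves implicit.
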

\begin{rem} \label{rem:LS}
In particular, since a log-Sobolev inequality always implies a spectral-gap estimate \cite{Ledoux-Book}, this is a strengthening of the Lichnerowicz estimate  $\lambda_1 \geq \rho_0 \frac{N-1}{N-2}$ \cite{KolesnikovEMilmanReillyPart1}. 
\end{rem}

The latter yields a log-Sobolev strengthening over Xia's spectral-gap estimate (\ref{eq:Xia}) for the boundary of a strictly locally-convex manifold of non-negative sectional curvature. For concreteness, we illustrate this below for geodesic balls:

\begin{example}
Assume that $\partial M = \emptyset$, $n \geq 3$, and that $(M^n,g)$ has sectional curvatures in the interval $[\kappa_0,\kappa_1]$. Let $B_r$ denote a geodesic ball around $p \in M$ of radius $0 < r \leq \text{inj}_p$, where $\text{inj}_p$ denotes the radius of injectivity at $p$, and consider $(\partial B_r,g_0,\vol_{\partial B_r})$ where $g_0 = g|_{\partial B_r}$. 
By \cite[Chapter 6, Theorem 27]{PetersenBook2ndEd}, $\II_{\partial B_r} \geq \sqrt{\kappa_1} \cot(\sqrt{\kappa_1} r) g_0$. Consequently, by Lemma \ref{lem:boundary-Ric}:
\[
\Ric^{\partial B_r}_{g_0}\geq (n-2) \kappa_0 g_0 + (H_{g} g_0 - \II_{\partial B_r}) \II_{\partial B_r}  \geq \rho_0 g_0 ~,~ \rho_0 := (n-2) \brac{\kappa_0 + \kappa_1 \cot^2(\sqrt{\kappa_1} r)} ~.
\]
It follows that $(\partial B_r,g_0,\vol_{\partial B_r})$ satisfies $\CD(\rho_0,n-1)$, and hence by the Bakry--\'Emery criterion as above this manifold satisfies a log-Sobolev inequality with constant $\lambda_{LS} \geq \frac{n-1}{n-2} \rho_0 = (n-1) (\kappa_0 + \kappa_1 \cot^2(\sqrt{\kappa_1} r))$ whenever the latter is positive, strengthening Xia's result for the spectral-gap (\ref{eq:Xia}) in the case of non-negative sectional curvature ($\kappa_0 = 0$). 

Furthermore, if we replace the lower bound assumption on the sectional curvatures by the assumption that $\Ric^{B_r}_g \geq \rho g$, we obtain by Corollary \ref{cor:LS} that $\lambda_{LS} \geq (n-1) (\frac{\rho - \kappa_1}{n-2} + \kappa_1 \cot^2(\sqrt{\kappa_1} r))$ whenever the latter is positive. 
\end{example}

\subsection{Spectral-Gap Estimates on $\partial M$ involving varying curvature}

Proceeding onward, we formulate our next results in Euclidean space with constant density (satisfying $\CD(0,n)$), since then the assumptions of Proposition \ref{prop:CD-boundary} are the easiest to enforce. We continue to denote by $g_0$ the induced Euclidean metric on $\partial M$, which by assumption is guaranteed to be $C^2$ smooth. In fact, we may always assume it is as smooth as we like, since the general case of a $C^2$ smooth boundary follows by a standard Euclidean approximation argument. 
 
 \medskip
We first record the Lichnerowicz spectral-gap estimate already mentioned in Remark \ref{rem:LS} (cf. \cite{KolesnikovEMilmanReillyPart1}). It improves (in the Euclidean setting) the spectral-gap estimate given by Corollary \ref{cor:IIH-Poincare} (which applies to general $\CD(0,0)$ weighted-manifolds).

\begin{thm}[Lichnerowicz Estimate on $\partial M$] \label{thm:Lich-On-Boundary}
Let $M$ denote a compact subset of Euclidean space $(\Real^n,g)$ with $C^2$-smooth boundary. Assume that:
\[
\II_{\partial M} \geq \sigma g_0 ~,~ tr(\II_{\partial M}) \geq \xi ~,
\]
for some $\sigma,\xi > 0$. Then:
\[
\Var_{\vol_{\partial M}}(f) \leq \frac{n-2}{n-1} \frac{1}{(\xi-\sigma)\sigma} \int_{\partial M} \abs{\nabla_{\partial M} f}^2 d\vol_{\partial M} ~,~ \forall f \in C^1(\partial M) ~.
\]
\end{thm}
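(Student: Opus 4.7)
The plan is to combine the boundary curvature-dimension inheritance principle (Proposition \ref{prop:CD-boundary}) with the classical Lichnerowicz spectral-gap estimate on the closed $(n-1)$-dimensional weighted manifold $(\partial M, g_0, \vol_{\partial M})$, as recorded in Remark \ref{rem:LS}.

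First, I would specialize Proposition \ref{prop:CD-boundary} to the ambient setting $(\Real^n, g, \vol)$. Since $g$ is flat and $V \equiv 0$, we have $\Ric_g \equiv 0$ and $\mathrm{R}^M_g \equiv 0$, so $(\Real^n, g, \vol)$ satisfies $\CD(0, n)$ and we may take $\kappa = 0$. The sign condition $\scalar{\nabla V, \nu} = 0 \leq 0$ is trivially verified, and $H_\mu = H_g$ throughout. The hypotheses $\II_{\partial M} \geq \sigma g_0$ (with $\sigma > 0$) and $H_g = tr(\II_{\partial M}) \geq \xi$ thus match exactly the assumptions of the sharper conclusion of Proposition \ref{prop:CD-boundary}, yielding that $(\partial M, g_0, \vol_{\partial M})$ satisfies $\CD(\rho_0, n-1)$ with
\[
\rho_0 = 0 - 0 + \sigma(\xi - \sigma) = \sigma(\xi - \sigma).
\]
Note that $tr(\II_{\partial M}) \geq (n-1)\sigma$ holds automatically, so we may assume $\xi \geq (n-1)\sigma \geq 2\sigma$ (using $n \geq 3$), in particular $\xi - \sigma > 0$ and $\rho_0 > 0$.

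Second, since $\partial M$ is the boundary of a compact subset of $\Real^n$, it is itself a closed $(n-1)$-dimensional manifold (no boundary-of-boundary). Consequently, the classical Lichnerowicz spectral-gap estimate recorded in Remark \ref{rem:LS}, applied to the $\CD(\rho_0, n-1)$ structure just obtained, gives
\[
\lambda_1 \geq \rho_0 \cdot \frac{n-1}{n-2} = \frac{n-1}{n-2} \, \sigma(\xi - \sigma).
\]
Translating this spectral-gap estimate into the Poincar\'e inequality yields precisely the stated constant $\frac{n-2}{n-1} \cdot \frac{1}{(\xi - \sigma)\sigma}$.

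Since both ingredients are already established in the paper, there is no genuine obstacle; the whole argument reduces to verifying that the hypotheses of Proposition \ref{prop:CD-boundary} apply in their sharper form (invoking $H_g \geq \xi$ rather than the default bound $H_g \geq (n-1)\sigma$), which is immediate from the assumed trace lower bound on $\II_{\partial M}$. The mild subtlety worth pointing out in the proof is the implicit consistency requirement $\xi > \sigma$, which is automatic under $n \geq 3$ as observed above.
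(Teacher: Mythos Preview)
Your proposal is correct and follows exactly the paper's approach: specialize Proposition \ref{prop:CD-boundary} to the Euclidean ambient (so $\rho=\kappa=0$, $V\equiv 0$, $N=n$) to obtain that $(\partial M,g_0,\vol_{\partial M})$ satisfies $\CD(\sigma(\xi-\sigma),n-1)$, and then invoke the Lichnerowicz estimate $\lambda_1 \ge \rho_0 \tfrac{N-1}{N-2}$ recorded in Remark \ref{rem:LS}. Your observation that $\xi > \sigma$ is automatic from $n\geq 3$ (the standing assumption of the section) is a useful clarification.
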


When the $\sigma,\xi > 0$ above are non-uniform, the next result follows from Proposition \ref{prop:CD-boundary} (or more precisely (\ref{eq:boundary-Ric-estimate})) coupled with a refinement of the Lichnerowicz estimate due to L.~Veysseire \cite{VeysseireSpectralGapEstimateCRAS} (cf. \cite{KolesnikovEMilmanReillyPart1}):

\begin{thm}[Veysseire Estimate on $\partial M$] \label{thm:Vey-On-Boundary}
Let $M$ denote a compact subset of Euclidean space $(\Real^n,g)$ with $C^2$-smooth boundary.
Assume that:
\[
\II_{\partial M} \geq \sigma g_0 ~,~ tr(\II_{\partial M}) \geq \xi ~,
\]
for some positive measurable functions $\sigma,\xi : \partial M \rightarrow \Real_+$. Then: 
\[
\Var_{\vol_{\partial M}}(f) \leq \dashint_{\partial M} \frac{1}{ (\xi -\sigma) \sigma} d\vol_{\partial M} \int_{\partial M} \abs{\nabla_{\partial M} f}^2 d\vol_{\partial M} ~,~ \forall f \in C^1(\partial M) ~.
\]
\end{thm}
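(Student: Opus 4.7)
The plan is to combine Proposition \ref{prop:CD-boundary} (specialized to the Euclidean setting) with Veysseire's variable-curvature spectral-gap estimate, which generalizes the Lichnerowicz bound by replacing a constant lower bound on $\Ric_\mu$ with an average of its pointwise reciprocal. Since $\partial M$ is the topological boundary of a compact set in $\Real^n$ with $C^2$ boundary, it is a closed (boundary-less) $C^2$ hypersurface, and since $V \equiv 0$, we have $\mu_{\partial M} = \vol_{\partial M}$. Veysseire's theorem applies to any closed weighted Riemannian manifold $(N,g,\nu)$ satisfying $\Ric_\nu \geq \rho(x)g$ with $\rho:N \to \Real_+$ measurable and positive, yielding $\Var_{\nu}(f) \leq \dashint_{N} \frac{1}{\rho}d\nu \cdot \int_{N}|\nabla f|^2 d\nu$.

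First I would verify the hypotheses of Proposition \ref{prop:CD-boundary} in our Euclidean setting: $(M,g,\vol_M)$ satisfies $\CD(0,n)$ (as $\Ric_g=0$, $\nabla^2 V=0$); the full Riemann tensor vanishes so $R^M_g(\cdot,\nu,\cdot,\nu)\equiv 0$ on $T\partial M$, giving $\kappa=0$; the condition $\scalar{\nabla V,\nu}=0 \leq 0$ is automatic; and by hypothesis $\II_{\partial M}\geq \sigma g_0$ with $H_g = tr(\II_{\partial M}) \geq \xi$. The second formula in Proposition \ref{prop:CD-boundary} therefore yields the pointwise bound
\[
\Ric^{\partial M}_{\vol_{\partial M},\,n-1}(x) \;\geq\; \sigma(x)\bigl(\xi(x)-\sigma(x)\bigr)\, g_0(x)
\]
on $\partial M$. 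A small point to check is that although the proposition is stated with constants $\sigma,\xi$, the proof proceeds pointwise (through the tensorial identity of Lemma \ref{lem:boundary-Ric} and the algebraic estimate (\ref{eq:boundary-Ric-estimate})), so it goes through with measurable $\sigma(x),\xi(x)$ verbatim. Note also that $\xi(x)>\sigma(x)$ pointwise, since $\xi(x) \geq tr(\II_{\partial M})(x)\geq (n-1)\sigma(x) \geq 2\sigma(x)$ using $n\geq 3$, so the lower bound $\sigma(\xi-\sigma)$ is a strictly positive measurable function, and the integrand $\frac{1}{(\xi-\sigma)\sigma}$ is well-defined.

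Second, I would apply Veysseire's theorem to the closed weighted manifold $(\partial M, g_0, \vol_{\partial M})$, whose generalized Ricci tensor in dimension $N-1=n-1$ is bounded below by the positive measurable function $\rho_0(x) := \sigma(x)(\xi(x)-\sigma(x))$. This directly gives
\[
\Var_{\vol_{\partial M}}(f) \;\leq\; \dashint_{\partial M} \frac{1}{\sigma(\xi-\sigma)}\, d\vol_{\partial M}\,\int_{\partial M} \abs{\nabla_{\partial M} f}^2\, d\vol_{\partial M}
\]
for all $f \in C^1(\partial M)$, which is the claim.

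The only substantive obstacle is the verification that Proposition \ref{prop:CD-boundary} extends to pointwise functional lower bounds; this is routine since the Gauss-type computation and the matrix inequality $(H_\mu g_0-\II)\II \geq \sigma(H_g-\sigma)g_0$ in (\ref{eq:boundary-Ric-estimate}) are pointwise assertions. Everything else is a black-box invocation of Proposition \ref{prop:CD-boundary} and Veysseire's inequality, with the Euclidean hypothesis playing only the role of making $\kappa=0$ so that the curvature contribution from the ambient space drops out.
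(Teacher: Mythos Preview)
Your proposal is correct and matches the paper's own argument: the paper states that the result ``follows from Proposition \ref{prop:CD-boundary} (or more precisely (\ref{eq:boundary-Ric-estimate})) coupled with a refinement of the Lichnerowicz estimate due to L.~Veysseire,'' which is precisely your two-step strategy. Your explicit remark that the constants in Proposition \ref{prop:CD-boundary} may be replaced by pointwise functions because the underlying estimate (\ref{eq:boundary-Ric-estimate}) is pointwise is exactly the content of the paper's parenthetical ``or more precisely (\ref{eq:boundary-Ric-estimate}).''
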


Both of the above results may be extended to a weighted Riemannian setting as long as the assumptions of Proposition \ref{prop:CD-boundary} remain valid (and say the boundary is assumed to be $C^3$ smooth, cf. Remark \ref{rem:C3}), but this is not so easy to enforce. In contrast, we conclude this section by providing an alternative argument, based on our generalized Colesanti inequality, which is much easier to generalize to the Riemannian setting (see Remark \ref{rem:Col-On-Boundary} below). 

\begin{thm}[Colesanti Estimate on $\partial M$] \label{thm:Col-On-Boundary}
With the same assumptions as in the previous theorem, we have:
\[
\Var_{\vol_{\partial M}}(f) \leq C \brac{\dashint_{\partial M} \frac{1}{\xi} d\vol_{\partial M} \dashint_{\partial M} \frac{1}{\sigma} d\vol_{\partial M}} \int_{\partial M} \abs{\nabla_{\partial M} f}^2 d\vol_{\partial M} ~,~ \forall f \in C^1(\partial M) ~,
\]
where $C>1$ is some universal (dimension-independent) numeric constant.
\end{thm}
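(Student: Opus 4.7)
My plan is to exploit both the generalized Colesanti inequality (Theorem~\ref{thm:Colesanti}) and its dual version (Theorem~\ref{thm:dual-Colesanti}), applied to the first Neumann eigenfunction of the Laplace--Beltrami operator on $\partial M$, combined with an auxiliary $L^2$-estimate obtained by solving a Poisson equation on the closed manifold $\partial M$.

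First, since $V\equiv 0$ in this Euclidean setting, $L_{\partial M}=\Delta_{\partial M}$, and by the elliptic theory of Section~\ref{sec:prelim} there is a first non-trivial eigenfunction $u\in C^2(\partial M)$ of $-\Delta_{\partial M}$ with $-\Delta_{\partial M}u=\lambda_1 u$, $\int u\,d\vol_{\partial M}=0$, and normalized so that $\int u^2\,d\vol_{\partial M}=1$ (hence $\int |\nabla u|^2\,d\vol_{\partial M}=\lambda_1$). The desired conclusion is equivalent to the spectral-gap bound $\lambda_1\ge \vol(\partial M)^2/\bigl(C\int \xi^{-1}\int\sigma^{-1}\bigr)$. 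Since Lebesgue measure on $\Real^n$ satisfies $\CD(0,n)$ (and thus $\CD(0,N)$ for every $1/N\in[-\infty,1/n]$), we apply the generalized Colesanti inequality in its $N=0$ form (valid because $u$ has zero mean) and the dual version with $\rho=0$, $C=0$ to $u$. Using the pointwise bounds $H_g\ge\xi$ and $\II_{\partial M}\ge\sigma g_0$, these yield the twin estimates
\[
\int \xi u^2\,d\vol \le \int \sigma^{-1}|\nabla u|^2\,d\vol, \qquad \int \sigma|\nabla u|^2\,d\vol \le \lambda_1^2\int \xi^{-1}u^2\,d\vol.
\]

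Next, I introduce the auxiliary function $h\in C^2(\partial M)$ solving $-\Delta_{\partial M} h=u$ with $\int h\,d\vol=0$, which is well-posed by the compatibility $\int u\,d\vol=0$. Integration by parts on the closed boundary gives $\int u^2 = -\int u\,\Delta h = \int \nabla u\cdot\nabla h$. The weighted Cauchy--Schwarz inequality with the pair $(\sigma^{-1},\sigma)$, followed by the dual Colesanti inequality applied to $h$ (noting $(\Delta h)^2=u^2$), yields the key auxiliary inequality
\[
\Bigl(\int u^2\Bigr)^2 \le \Bigl(\int \sigma^{-1}|\nabla u|^2\Bigr)\Bigl(\int \sigma|\nabla h|^2\Bigr) \le \Bigl(\int \sigma^{-1}|\nabla u|^2\Bigr)\Bigl(\int \xi^{-1}u^2\Bigr).
\]

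Finally, to extract the averages $\dashint\xi^{-1}$ and $\dashint\sigma^{-1}$ (rather than the weighted norms $\int\xi^{-1}u^2$ and $\int\sigma^{-1}|\nabla u|^2$), I pair the twin Colesanti estimates against the constant function~$1$: by Cauchy--Schwarz one has $\bigl(\int\xi u^2\bigr)\bigl(\int\xi^{-1}\bigr)\ge \bigl(\int |u|\bigr)^2$ and $\bigl(\int \sigma|\nabla u|^2\bigr)\bigl(\int\sigma^{-1}\bigr)\ge \bigl(\int |\nabla u|\bigr)^2$, and combining these with Step~2 and the eigenfunction identity $\int|\nabla u|^2=\lambda_1\int u^2$ gives the desired bound up to a universal factor. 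The main obstacle is precisely this combinatorial closure: one must show that the weighted norms $\int \xi^{-1}u^2$ and $\int\sigma^{-1}|\nabla u|^2$ whose product is bounded below by $1$ via the auxiliary inequality cannot be simultaneously much smaller than their ``correct'' averaged counterparts $\dashint\xi^{-1}\cdot\int u^2$ and $\dashint\sigma^{-1}\cdot\int|\nabla u|^2$, a quasi-Muckenhoupt reversal enforced by the joint use of Theorems~\ref{thm:Colesanti} and~\ref{thm:dual-Colesanti} on the eigenfunction, producing the purely universal constant $C>1$ with no dimensional or geometric dependence.
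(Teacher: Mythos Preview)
Your argument has a genuine gap precisely at the ``combinatorial closure'' you flag in the final paragraph. The twin estimates and the auxiliary Poisson equation give you
\[
1 = \Bigl(\int u^2\Bigr)^2 \le \Bigl(\int \sigma^{-1}|\nabla u|^2\Bigr)\Bigl(\int \xi^{-1}u^2\Bigr),
\]
but to conclude you would need to replace the weighted quantities $\int \xi^{-1}u^2$ and $\int \sigma^{-1}|\nabla u|^2$ by $\bigl(\dashint \xi^{-1}\bigr)\int u^2$ and $\bigl(\dashint \sigma^{-1}\bigr)\int |\nabla u|^2$. Your Cauchy--Schwarz pairings against the constant function go in the wrong direction: they give \emph{lower} bounds for $\int \xi u^2$ and $\int \sigma|\nabla u|^2$ in terms of $(\int|u|)^2$ and $(\int|\nabla u|)^2$, but there is no a priori reverse H\"older inequality controlling $\int|u|$ from below by $(\int u^2)^{1/2}$ for a first eigenfunction, nor for $|\nabla u|$. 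Nothing in Theorems~\ref{thm:Colesanti} or~\ref{thm:dual-Colesanti} supplies such a reversal, and without it the chain does not close; the phrase ``quasi-Muckenhoupt reversal enforced by the joint use'' is a name for what is needed, not an argument.

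The paper's proof avoids this obstacle entirely by a different mechanism. Rather than working with the eigenfunction, it tests the Colesanti inequality on an arbitrary $1$-Lipschitz zero-mean function $f$, obtaining directly
\[
\Bigl(\dashint |f|\Bigr)^2 \le \dashint \xi^{-1} \cdot \dashint \sigma^{-1},
\]
where the averages appear cleanly because the Lipschitz bound $|\nabla_{\partial M} f|\le 1$ absorbs the gradient. This is a first-moment (Cheeger/concentration-type) estimate, not a spectral-gap estimate. The passage from a first-moment bound on Lipschitz functions to a Poincar\'e inequality is then supplied by an external theorem of the second-named author \cite{EMilman-RoleOfConvexity}, valid on any weighted manifold satisfying $\CD(0,\infty)$; this is applied to $(\partial M, g_0, \vol_{\partial M})$, which satisfies $\CD(0,n-1)$ in the Euclidean setting by Proposition~\ref{prop:CD-boundary}. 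That external result is the essential missing ingredient in your approach and is where the universal constant $C$ actually originates.
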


\begin{proof}
Given a $1$-Lipschitz function $f : \partial M \rightarrow \Real$ with $\int_{\partial M} f d\vol_{\partial M} = 0$, we may estimate using Cauchy--Schwarz and Theorem \ref{thm:Colesanti}:
\begin{eqnarray}
\nonumber \Bigl(\dashint_{\partial M} \abs{f} d\vol_{\partial M} \Bigr)^2 &\leq& \dashint_{\partial M} \frac{1}{\xi} d\vol_{\partial M}\dashint_{\partial M} \xi f^2 d\vol_{\partial M} \\
\nonumber &\leq&  \dashint_{\partial M} \frac{1}{\xi} d\vol_{\partial M}  \dashint_{\partial M} \scalar{\II_{\partial M}^{-1} \;\nabla_{\partial M} f,\nabla_{\partial M} f} d\vol_{\partial M} \\
\label{eq:FM-estimate} & \leq & \dashint_{\partial M} \frac{1}{\xi} d\vol_{\partial M} \dashint_{\partial M} \frac{1}{\sigma} d\vol_{\partial M} ~.
\end{eqnarray}
It follows by a general result of the second-named author \cite{EMilman-RoleOfConvexity}, which applies to any weighted-manifold satisfying the $\CD(0,\infty)$ condition, and in particular to $(\partial M, g_0 , \vol_{\partial M})$, that up to a universal constant, the same estimate as in (\ref{eq:FM-estimate}) holds for the variance of any function $f \in C^{1}(\partial M)$ with $\dashint_{\partial M} \abs{\nabla_{\partial M} f}^2 d\vol_{\partial M} \leq 1$, and so the assertion follows. \end{proof}

Note that the estimate given by Theorem \ref{thm:Lich-On-Boundary} is sharp and that the ones in Theorems \ref{thm:Vey-On-Boundary} and \ref{thm:Col-On-Boundary} are sharp up to constants, as witnessed by $S^{n-1} \subset \Real^n$. 

\begin{rem} \label{rem:Col-On-Boundary}
In Theorem \ref{thm:Col-On-Boundary}, the Euclidean setting was only used to establish that $(\partial M, g_0, \vol_{\partial M})$ satisfies $\CD(0,\infty)$. 
It is immediate to check that Theorem \ref{thm:Col-On-Boundary} generalizes to the formulation given in Theorem \ref{thm:intro-Col-On-Boundary} from the Introduction, whenever $(M, g, \mu)$ satisfies $\CD(0,0)$ (owing to our convention that $-\infty \cdot 0 = 0$) and $(\partial M,g_0,\mu_{\partial M})$ satisfies $\CD(0,\infty)$.
We remark that the results from \cite{EMilman-RoleOfConvexity} used in the proof of Theorem \ref{thm:Col-On-Boundary} were obtained assuming the metric in question is $C^\infty$ smooth, but an inspection of the proof, which builds upon the regularity results in \cite{MorganRegularityOfMinimizers}, verifies that it is enough to have a $C^2$ metric. 
\end{rem}

\section{Connections to the  Brunn--Minkowski Theory} \label{sec:BM}

It was shown by Colesanti \cite{ColesantiPoincareInequality} that in the Euclidean case, the inequality (\ref{eq:gen-full0}) is \emph{equivalent} to the statement that the function $t \mapsto \vol(K + t L)^{1/n}$ is concave at $t=0$ when $K,L$ are strictly convex and $C^2$ smooth. Here $A + B := \set{a + b \; ; \; a \in A , b \in B}$ denotes Minkowski addition. Using homogeneity of the volume and a standard approximation procedure of arbitrary convex sets by ones as above, this is in turn equivalent to:
\[
\vol((1-t) K + t L)^{1/n} \geq (1-t) \vol(K)^{1/n} + t \vol(L)^{1/n} ~,~ \forall t \in [0,1] ~,
\]
for all convex $K,L \subset \Real^n$. This is precisely the content of the celebrated Brunn--Minkowski inequality in Euclidean space (e.g. \cite{Schneider-Book,GardnerSurveyInBAMS}), at least for convex domains. Consequently, Theorem \ref{thm:Colesanti} provides yet another proof of the Brunn--Minkowski inequality in Euclidean space via the Reilly formula. Conceptually, this is not surprising since the generalized Reilly formula is in a sense a dual formulation of the Brascamp--Lieb inequality (see \cite{KolesnikovEMilmanReillyPart1}), and the latter is known to be an infinitesimal form of the Prekop\'a--Leindler inequality, which in turn is a functional (essentially equivalent) form of the Brunn-Minkowski inequality. So all of these inequalities are intimately intertwined and essentially equivalent to one another; see \cite{BrascampLiebPLandLambda1,BobkovLedoux, Ledoux-Book} for more on these interconnections.  We also mention that as a by-product, we may obtain all the well-known consequences of the Brunn--Minkowski inequality (see \cite{GardnerSurveyInBAMS}); for instance, by taking the first derivative in $t$ above, we may deduce the (anisotropic) isoperimetric inequality (for convex sets $K$).

Since our generalization of Colesanti's Theorem holds on any weighted-manifold satisfying the $\CD(0,N)$ condition, it is then natural to similarly try and obtain a Brunn--Minkowski or isoperimetric-type inequality in the latter setting. The main difficulties arising with such an attempt in this generality are the lack of homogeneity, the lack of a previously known generalization of Minkowski addition, and the fact that enlargements of convex sets are in general non-convex (consider geodesic balls on the sphere which are extended past the equator). At least some of these issues are addressed in what follows.

\subsection{Riemannian Brunn--Minkowski for Geodesic Extensions}
 
Let $K$ denote a compact subset of $(M^n,g)$ with $C^2$ smooth boundary ($n \geq 2$) which is bounded away from $\partial M$. Denote:
\[
\delta^0(K) := \mu(K) ~,~ \delta^1(K) := \mu_{\partial K}(\partial K) := \int_{\partial K} d\mu_{\partial K} ~,~ \delta^2(K) := \int_{\partial K} H_\mu d\mu_{\partial K} ~.
\]
It is well-known (see e.g. \cite{EMilmanGeometricApproachPartI} or Subsection \ref{subsec:Full-BM}) that $\delta^i$, $i=0,1,2$, are the $i$-th variations of $\mu(K_t)$, where $K_t$ is the $t$-neighborhood of $K$, i.e. $K_t := \set{x \in M ;  d(x,K) \leq t}$ with $d$ denoting the geodesic distance on $(M,g)$. Given $1/N \in (-\infty,1/n]$,  denote in analogy to the Euclidean case the ``generalized quermassintegrals" by:
\[
W_N(K) = \delta^0(K) ~,~ W_{N-1}(K) = \frac{1}{N} \delta^1(K) ~,~ W_{N-2}(K) = \frac{1}{N (N-1)} \delta^2(K) ~.
\]
Observe that when $\mu = \vol_M$ and $N=n$, these quermassintegrals coincide with the Lipschitz--Killing invariants in Weyl's celebrated tube formula, namely the coefficients of the polynomial $\mu(K_t) = \sum_{i=0}^n {n \choose i} W_{n-i}(K) t^i$ for $t \in [0,\eps_K]$ and small enough $\eps_K > 0$. In particular, when $(M,g)$ is Euclidean and $K$ is convex, these generalized quermassintegrals coincide with their classical counterparts, discovered by Steiner in the 19th century (see e.g. \cite{BernigCurvatureSurvey} for a very nice account). 

\medskip
As an immediate consequence of Corollary \ref{cor:HuangRuan1} we obtain:

\begin{cor}{\bf (Riemannian Brunn-Minkowski for Geodesic Extensions)}. \label{cor:Geodesic-BM}
Assume that $(K,g|_K,\mu|_K)$ satisfies the $\CD(0,N)$ condition ($1/N \in (-\infty,1/n]$) and that $K$ is strictly locally-convex ($\II_{\partial K} > 0$). Then the following statements hold:
\begin{enumerate}
\item (Generalized Minkowski's second inequality for geodesic extensions)
\begin{equation} \label{eq:Alexandrov}
W_{N-1}(K)^2 \geq W_{N}(K) W_{N-2}(K) ~,
\end{equation}
or in other words:
\[
\delta^1(K)^2 \geq \frac{N}{N-1} \delta^0(K) \delta^2(K) ~.
\]
\item (Infinitesimal Geodesic Brunn--Minkowski)
$(d/dt)^2 N \mu(K_t)^{1/N} |_{t=0} \leq 0$. 
\item (Global Geodesic Brunn--Minkowski)
The function $t \mapsto N \mu(K_t)^{1/N}$ is concave on any interval $[0,T]$ so that for all $t \in [0,T)$, $K_t$ is $C^2$ smooth, strictly locally-convex, bounded away from $\partial M$, and $(K_t,g|_{K_t},\mu|_{K_t})$ satisfies $\CD(0,N)$. 
\end{enumerate}
\end{cor}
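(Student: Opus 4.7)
The plan is to deduce all three assertions directly from Corollary~\ref{cor:HuangRuan1}, combined with the standard first and second variation formulas for $t\mapsto \mu(K_t)$ recorded in the text, namely $\frac{d}{dt}\mu(K_t)\big|_{t=0} = \delta^1(K)$ and $\frac{d^2}{dt^2}\mu(K_t)\big|_{t=0} = \delta^2(K)$ (derivable along the normal flow $F_t(y)=\exp_y(t\nu(y))$, and treated more fully in Subsection~\ref{subsec:Full-BM}).

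For part (1), I would apply Corollary~\ref{cor:HuangRuan1} directly to the weighted manifold $(K,g|_K,\mu|_K)$, whose boundary is $\partial K$ and which by hypothesis satisfies $\CD(0,N)$ with $\II_{\partial K}>0$. The corollary then reads
\[
\delta^2(K) \;=\; \int_{\partial K} H_\mu \, d\mu_{\partial K} \;\leq\; \frac{N-1}{N}\,\frac{\mu_{\partial K}(\partial K)^2}{\mu(K)} \;=\; \frac{N-1}{N}\,\frac{\delta^1(K)^2}{\delta^0(K)},
\]
which rearranges to $\delta^1(K)^2 \geq \frac{N}{N-1}\delta^0(K)\delta^2(K)$. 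Substituting the definitions $W_N(K)=\delta^0(K)$, $W_{N-1}(K)=\delta^1(K)/N$, and $W_{N-2}(K)=\delta^2(K)/(N(N-1))$ gives precisely $W_{N-1}(K)^2\geq W_N(K)W_{N-2}(K)$.

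For part (2), I would apply the chain rule to $G(t):=N\mu(K_t)^{1/N}$, which yields at $t=0$
\[
G''(0) \;=\; \mu(K)^{1/N-1}\Bigl[\,\delta^2(K) \;-\; \tfrac{N-1}{N}\,\tfrac{\delta^1(K)^2}{\delta^0(K)}\,\Bigr].
\]
Since the prefactor is positive, the sign of $G''(0)$ is determined by the bracketed expression, which is non-positive by part (1); in fact the algebra shows non-positivity of $G''(0)$ is \emph{equivalent} to part (1). The convention $N\mu^{1/N}=\log\mu$ at $1/N=0$ handles $N=\infty$ uniformly, and the sign of $(N-1)/N$ works out consistently in both ranges $N\in[n,\infty]$ and $N<0$ permitted by the hypothesis $1/N\in(-\infty,1/n]$.

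For part (3), I would apply part (2) at every $t\in[0,T)$ rather than only at $t=0$. The semigroup identity $(K_t)_s=K_{t+s}$ for $s\geq 0$, immediate from the triangle inequality for the geodesic distance, together with the standing assumption that $K_t$ remains $C^2$-smooth, strictly locally-convex, bounded away from $\partial M$, and inherits $\CD(0,N)$, ensures that the hypotheses of part (2) hold with $K$ replaced by $K_t$. This yields $G''(t)\leq 0$ pointwise on $[0,T)$, and combined with continuity of $G$ up to the endpoint (by monotone convergence of $\mu(K_t)\uparrow\mu(K_T)$), this delivers concavity on the closed interval $[0,T]$. The only real ``obstacle'' is bookkeeping: one must verify the weighted first and second variation formulas for neighborhood measures are literally $\delta^1(K)$ and $\delta^2(K)$ so that parts~(1) and~(2) are algebraically equivalent, after which no further analytic input is needed.
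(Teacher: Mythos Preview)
Your proposal is correct and follows essentially the same approach as the paper's own proof: part~(1) is obtained directly from Corollary~\ref{cor:HuangRuan1}, part~(2) is the chain-rule computation showing $G''(0)=\mu(K)^{1/N-2}\bigl(\delta^0\delta^2-\tfrac{N-1}{N}(\delta^1)^2\bigr)$, and part~(3) is the integrated version obtained by applying part~(2) at each $t$ via the semigroup property $(K_t)_s=K_{t+s}$. Your write-up is somewhat more detailed (explicit sign bookkeeping, the $N=\infty$ convention, endpoint continuity), but the underlying argument is the same.
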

\begin{proof}
The first assertion is precisely the content of Corollary \ref{cor:HuangRuan1}. The second follows since:
\[
(d/dt)^2 N \mu(K_t)^{1/N} |_{t=0} = \mu(K_t)^{1/N-2} \brac{\delta_2(K) \delta_0(K) - \frac{N-1}{N} \delta_1(K)^2} ~.
\]
The third is an integrated version of the second. 
\end{proof}

\begin{rem}
In the non-weighted Riemannian setting, the interpretation of Corollary \ref{cor:HuangRuan1} as a Riemannian version of Minkowski's second inequality was already noted by Reilly \cite{ReillyMeanCurvatureEstimate}. 
We also mention that in Euclidean space, a related Alexandrov--Fenchel inequality was shown to hold by Guan and Li \cite{GuanLiAlexandrovFenchelForNonConvexUsingFlows} for arbitrary mean-convex star-shaped domains. 
\end{rem}

\subsection{Generalized Minkowski Addition: The Parallel Normal Flow} \label{subsec:Minkowski-Sum} 

Let $F_0 : \Sigma^{n-1} \rightarrow M^n$ denote a smooth embedding of an oriented submanifold $\Sigma_0 := F_0(\Sigma)$ in $(M,g)$, where $\Sigma$ is a $n-1$ dimensional compact smooth  oriented manifold without boundary. The following geometric evolution equation for $F : \Sigma \times [0,T] \rightarrow M$ has been well-studied in the literature:
\begin{equation} \label{eq:flow0}
\frac{d}{dt} F(y,t) = \varphi(y,t) \nu_{\Sigma_t}(F(y,t)) ~,~ F(y,0) = F_0 ~,~ y \in \Sigma ~,~ t \in [0,T] ~.
\end{equation}
Here $\nu_{\Sigma_t}$ is the unit-normal (in accordance to the chosen orientation) to $\Sigma_t := F_t(\Sigma)$, $F_t := F(\cdot,t)$, and $\varphi : \Sigma \times [0,T] \rightarrow \Real_+$ denotes a function depending on the extrinsic geometry of $\Sigma_t \subset M$ at $F(y,t)$. Typical examples for $\varphi$ include the mean-curvature, the inverse mean-curvature, the Gauss curvature, and other symmetric polynomials in the principle curvatures (see \cite{HuiskenPoldenSurvey} and the references therein).  

Motivated by the DeTurck trick in the analysis of Ricci-flow (e.g. \cite{ToppingRicciFlowBook}), we propose to add another tangential component $\tau_t$ to (\ref{eq:flow0}). Let $\varphi : \Sigma \rightarrow \Real$ denote a $C^2$ function which is fixed throughout the flow. Assume that $\II_{\Sigma_t} > 0$ for $t \in [0,T]$ along the following flow:
\begin{eqnarray}
 \label{eq:Mink-flow} \frac{d}{dt} F(y,t)  &=& \omega_t(F(y,t)) ~,~ F(y,0) = F_0 ~,~ y \in \Sigma ~,~ t \in [0,T] ~, \\
\nonumber \omega_t &:=& \varphi_t \nu_{\Sigma_t} + \tau_t ~ \text{ on $\Sigma_t$} ~,~ \tau_t := \II_{\Sigma_t}^{-1} \nabla_{\Sigma_t} \varphi_t  ~,~ \varphi_t := \varphi \circ F_t^{-1} ~.
\end{eqnarray}
For many flows, the tangential component $\tau_t$ would be considered an inconsequential diffeomorphism term, which does not alter the set $\Sigma_t = F_t(\Sigma)$, only the individual trajectories $t \mapsto F(y,t)$ for a given $y \in \Sigma$. However, contrary to most flows where $\varphi(y,t)$ depends solely on the geometry of $\Sigma_t$ at $F_t(y)$, for our flow $\varphi$ plays a different role, and in particular its value along every trajectory is fixed throughout the evolution. Consequently, this tangential term creates a desirable geometric effect as we shall see below. 

Before proceeding, it is useful to note that (\ref{eq:Mink-flow}) is clearly parametrization invariant: if $\zeta: \Sigma' \rightarrow \Sigma$ is a diffeomorphism and $F$ satisfies (\ref{eq:Mink-flow}) on $\Sigma$, then $F'(z,t) := F(\zeta(z),t)$ also satisfies (\ref{eq:Mink-flow}) with $\varphi'(z) := \varphi(\zeta(z))$. Consequently, we see that (\ref{eq:Mink-flow}) defines a semi-group of pairs $(\Sigma_t, \varphi_t)$, so it is enough to perform calculations at time $t=0$. In addition, we are allowed to use a convenient parametrization $\Sigma'$ for our analysis. 

\subsubsection{Euclidean Setting}

We now claim that in Euclidean space, Minkowski summation can indeed be parametrized by the evolution equation (\ref{eq:Mink-flow}). Given a convex compact set in $\Real^n$ containing the origin in its interior (``convex body") with $C^2$ smooth boundary and outer unit-normal $\nu_K$, by identifying $T_x \Real^n$ with $\Real^n$, $\nu_K: \partial K \rightarrow S^{n-1}$ is the Gauss-map. Note that when $K$ is strictly convex ($\II_{\partial K} > 0$), the Gauss-map is a diffeomorphism. Finally, the support function $h_K$ is defined by $h_K(x) := \sup_{y \in K} \scalar{x,y}$, so that $\scalar{x,\nu_K(x)} = h_K(\nu_K(x))$ and $\scalar{\nu_K^{-1}(\nu),\nu} = h_K(\nu)$. 

\begin{prop} \label{prop:Euclidean-Flow}
Let $K$ and $L$ denote two strictly convex bodies in $\Real^n$ with $C^2$ smooth boundaries. Let $F : S^{n-1} \times \Real_+ \rightarrow \Real^n$ be defined by $F(\nu,t) := \nu_{K + tL}^{-1}(\nu) $, so that $\partial (K + t L) =  F_t(S^{n-1})$ for all $t \geq 0$.  Then $F$ satisfies (\ref{eq:Mink-flow}) with $\varphi = h_L$ and $F_0 := \nu_K^{-1}$. 
\end{prop}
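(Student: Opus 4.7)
The plan is to verify, for each $\nu \in S^{n-1}$, that $\frac{d}{dt} F(\nu,t)$ splits into a radial part equal to $h_L(\nu)\,\nu$ and a tangential part equal to $\nabla_{S^{n-1}} h_L(\nu)$, and then to match these against the normal and tangential components of $\omega_t$ at the point $x = F(\nu,t) \in \Sigma_t := \partial(K+tL)$.

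Computing the derivative is the easy half. For any strictly convex body $K'$ with $C^2$ boundary, extend $h_{K'}$ to a $1$-homogeneous function on $\Real^n$. The identity $\sscalar{\nu_{K'}^{-1}(\nu),\nu} = h_{K'}(\nu)$ together with Euler's identity for $1$-homogeneous functions, and the fact that tangential directions to $\partial K'$ at $\nu_{K'}^{-1}(\nu)$ lie in $\nu^\perp$, yields the classical identity
\[
\nu_{K'}^{-1}(\nu) = h_{K'}(\nu)\,\nu + \nabla_{S^{n-1}} h_{K'}(\nu), \qquad \nu \in S^{n-1}.
\]
Applying this to $K' = K + tL$ and invoking the additivity $h_{K+tL} = h_K + t\,h_L$, one differentiates in $t$ to obtain
\[
\frac{d}{dt} F(\nu,t) = h_L(\nu)\,\nu + \nabla_{S^{n-1}} h_L(\nu),
\]
which is independent of $t$.

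Now I match. Since $F_t = \nu_{K+tL}^{-1}$, by construction $\nu_{\Sigma_t}(F(\nu,t)) = \nu$, and hence
$\varphi_t(F(\nu,t)) = (h_L \circ F_t^{-1})(F(\nu,t)) = h_L(\nu)$,
so the normal component $\varphi_t \nu_{\Sigma_t}$ at $x$ is $h_L(\nu)\,\nu$, agreeing with the radial part above. For the tangential part, $\varphi_t = h_L \circ \nu_{\Sigma_t}$ on $\Sigma_t$, and the convention $\II_{\Sigma_t}(X,Y) = \scalar{\nabla_X \nu_{\Sigma_t}, Y}$ identifies $\II_{\Sigma_t}$ with the differential of the Gauss map $\nu_{\Sigma_t} : \Sigma_t \to S^{n-1}$, after the canonical identification of $T_\nu S^{n-1}$ with $T_x \Sigma_t$ as the common subspace $\nu^\perp \subset \Real^n$. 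The chain rule, together with self-adjointness of the Weingarten map, then gives
\[
\nabla_{\Sigma_t} \varphi_t = \II_{\Sigma_t}\, \nabla_{S^{n-1}} h_L(\nu),
\]
so that $\II_{\Sigma_t}^{-1} \nabla_{\Sigma_t} \varphi_t = \nabla_{S^{n-1}} h_L(\nu)$, matching the tangential part and completing the verification of (\ref{eq:Mink-flow}). Finally, $F_0 = \nu_K^{-1}$ is immediate from the definition.

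The only mildly delicate point is the careful identification of $T_\nu S^{n-1}$ with $T_x \Sigma_t$ and the consequent reading of $d\nu_{\Sigma_t}$ as the shape operator $\II_{\Sigma_t}$; once this identification is in place, every other step reduces to standard properties of support functions under Minkowski addition and to the chain rule.
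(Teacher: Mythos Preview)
Your proof is correct and follows essentially the same route as the paper: both compute $\frac{d}{dt}F(\nu,t)$ via additivity of support functions (equivalently, of inverse Gauss maps), decompose the result into the normal piece $h_L(\nu)\,\nu$ and the tangential piece $\nabla_{S^{n-1}} h_L(\nu)$, and then match the tangential piece against $\II_{\Sigma_t}^{-1}\nabla_{\Sigma_t}\varphi_t$ using the chain rule and the identification of $\II_{\Sigma_t}$ with the differential of the Gauss map. The only cosmetic difference is that you invoke the classical identity $\nu_{K'}^{-1}(\nu) = h_{K'}(\nu)\,\nu + \nabla_{S^{n-1}} h_{K'}(\nu)$ up front, whereas the paper derives the equivalent fact $\nabla_{\Real^n} h_L(\nu) = \nu_L^{-1}(\nu)$ at the end from the duality between $h_L$ and $\norm{\cdot}_L$.
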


\begin{proof}
As the support function is additive with respect to Minkowski addition, then so is the inverse Gauss-map: $\nu_{K + tL}^{-1} = \nu_K^{-1} + t \nu_L^{-1}$. Consequently:
\[
\frac{d}{dt} F_t(\nu) = \nu_L^{-1}(\nu) = h_L(\nu) \nu + \brac{ \nu_L^{-1}(\nu) - \scalar{\nu_L^{-1}(\nu) , \nu} \nu} ~.
\]
Since $\nu_{F_t(S^{n-1})}(F_t(\nu)) = \nu$, it remains to show (in fact, just for $t=0$) with the usual identification between $T_x \Real^n$ and $\Real^n$ that:
\[
 \nu_{K + t L}(x) = \nu \;\;\; \Rightarrow \;\;\; \II_{\partial(K + t L)}^{-1} \nabla_{\partial(K + tL)} (h_L \circ \nu_{K+tL}) (x) = \nu_L^{-1}(\nu) - \scalar{\nu_L^{-1}(\nu) , \nu} \nu ~.
 \]
 Indeed by the chain-rule:
 \[
 \nabla_{\partial K}(h_L(\nu_K(x))) = \nabla_{S^{n-1}} h_L(\nu) \nabla_{\partial K} \nu_K(x) = \II_{\partial K}(x)  \nabla_{S^{n-1}} h_L(\nu) ~,
 \]
 so our task reduces to showing that:
 \[
 \nabla_{S^{n-1}} h_L(\nu) = \nu_L^{-1}(\nu) - \scalar{\nu_L^{-1}(\nu) , \nu} \nu ~,~ \forall \nu \in S^{n-1} ~.
 \]
 This is indeed the case, and moreover:
\[
\nabla_{\Real^n} h_L(\nu) = \nu_L^{-1}(\nu) ~,~ \forall \nu \in S^{n-1} ~.
\]
The reason is that $\nu_L(x) = \frac{\nabla_{\Real^n} \norm{x}_L}{\abs{\nabla_{\Real^n}\norm{x}_L}}$, and since $\nabla_{\Real^n} h_L$ is $0$-homogeneous, we obtain:
\[
\nabla_{\Real^n} h_L \circ \nu_L(x) = \nabla_{\Real^n} h_L \circ \nabla_{\Real^n} \norm{x}_L = x ~,~ \forall x \in \partial L  ~,
\]
where the last equality follows since $h_L$ and $\norm{\cdot}_L$ are dual norms. This concludes the proof. 
\end{proof}

\subsubsection{Characterization in Riemannian Setting}

The latter observation gives a clear geometric interpretation of what the flow (\ref{eq:Mink-flow}) is doing in the Euclidean setting: normals to the evolving hypersurface remain constant along  trajectories. In the more general Riemannian setting, where one cannot identify between $M$ and $T_x M$ and where the Gauss map is not available, we have the following geometric characterization of the flow (\ref{eq:Mink-flow}) which extends the latter property: normals to the evolving hypersurface remain \emph{parallel} along  trajectories. Consequently, we dub (\ref{eq:Mink-flow}) the ``Parallel Normal Flow".

\begin{prop} \label{prop:normal-parallel}
Consider the following geometric evolution equation along a time-dependent vector-field $\omega_t$:
\[
\frac{d}{dt} F_t(y) = \omega_t(F_t(y))  ~,~ y \in \Sigma ~,~ t \in [0,T] ~,
\]
and assume that $F_t: \Sigma \rightarrow \Sigma_t$ is a local diffeomorphism and that $\II_{\Sigma_t} > 0$ for all $t \in [0,T]$. 
Then the unit-normal field is parallel along the flow:
\[
\frac{d}{dt} \nu_{\Sigma_t}(F_t(y)) = 0 ~,~ \forall y \in \Sigma ~,~ \forall t \in [0,T] ~,
\]
if and only if there exists a family of functions $f_t : \Sigma_t\rightarrow \Real$, $t \in [0,T]$, so that:
\begin{equation} \label{eq:char}
\omega_t = f_t \nu_{\Sigma_t} + \II_{\Sigma_t}^{-1} \nabla_{\Sigma_t} f_t ~,
\end{equation}
Furthermore, the entire normal component of $\omega_t$, denoted $\omega^\nu_t := \scalar{\omega_t,\nu_{\Sigma_t}} \nu_{\Sigma_t}$, is parallel along the flow:
\[
\frac{d}{dt} \omega^\nu_{t}(F_t(y)) = 0 ~,~ \forall y \in \Sigma ~,~ \forall t \in [0,T] ~,
\]
if and only if there exists a function $\varphi : \Sigma \rightarrow \Real$ so that $f_t = \varphi \circ F_t^{-1}$ in (\ref{eq:char}). 
\end{prop}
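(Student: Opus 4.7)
The plan is to split $\omega_t$ into its normal and tangential components and directly compute how the unit normal $\nu_{\Sigma_t}$ evolves along each trajectory $t\mapsto F_t(y)$. The key identity to establish is
\[
\nabla_t \nu_{\Sigma_t} \;=\; \II_{\Sigma_t}(\tau_t)\;-\;\nabla_{\Sigma_t} f_t ~,
\]
where $\nabla_t$ denotes covariant differentiation along the trajectory, $f_t := \scalar{\omega_t,\nu_{\Sigma_t}}$, and $\tau_t := \omega_t - f_t\nu_{\Sigma_t}$ is the tangential part of $\omega_t$; here $\II_{\Sigma_t}$ is viewed as the Weingarten map, $\II_{\Sigma_t}(X)=\nabla_X\nu_{\Sigma_t}\in T\Sigma_t$. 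Both equivalences drop out from this formula immediately.

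To derive the identity, fix $y_0\in\Sigma$ and $X_0\in T_{y_0}\Sigma$, and propagate $X_0$ along the flow by $X_t := (F_t)_*X_0$; this is a well-defined tangent vector to $\Sigma_t$ at $F_t(y_0)$ since $F_t$ is a local diffeomorphism. Viewing $F:\Sigma\times[0,T]\to M$ as a single smooth map, the coordinate vectors $\partial_t$ and $X_0$ commute on $\Sigma\times[0,T]$; pushing forward by $F$ gives the space-time commutator relation $[\omega_t, X_t]=0$ along the trajectory, and hence $\nabla_{\omega_t} X_t = \nabla_{X_t}\omega_t$. Differentiating the orthogonality $\scalar{\nu_{\Sigma_t}, X_t}\equiv 0$ in $t$, and noting that $\nabla_t\nu_{\Sigma_t}\perp\nu_{\Sigma_t}$ because $|\nu_{\Sigma_t}|^2\equiv 1$, one obtains
\[
\scalar{\nabla_t\nu_{\Sigma_t},X_t} \;=\; -\scalar{\nu_{\Sigma_t},\nabla_{X_t}\omega_t}~.
\]
Substituting $\omega_t=f_t\nu_{\Sigma_t}+\tau_t$, using $\nabla_{X_t}\nu_{\Sigma_t}=\II_{\Sigma_t}(X_t)\in T\Sigma_t$, and differentiating $\scalar{\tau_t,\nu_{\Sigma_t}}\equiv 0$ in the direction $X_t$, yields
\[
\scalar{\nu_{\Sigma_t},\nabla_{X_t}\omega_t} \;=\; X_t f_t \;-\; \scalar{\II_{\Sigma_t}(X_t),\tau_t}~.
\]
Symmetry of the second fundamental form, together with the arbitrariness of $X_t\in T\Sigma_t$, gives the announced identity.

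Since $\II_{\Sigma_t}$ is positive definite and hence invertible by hypothesis, $\nabla_t \nu_{\Sigma_t}=0$ if and only if $\tau_t=\II_{\Sigma_t}^{-1}\nabla_{\Sigma_t}f_t$, which is precisely (\ref{eq:char}); this proves the first equivalence. For the second, when (\ref{eq:char}) holds the first part gives $\nabla_t \nu_{\Sigma_t}=0$, hence
\[
\nabla_t\omega^\nu_t \;=\; \nabla_t(f_t\,\nu_{\Sigma_t}) \;=\; \Bigl(\tfrac{d}{dt}[f_t(F_t(y))]\Bigr)\,\nu_{\Sigma_t}~,
\]
which vanishes for every $y$ and $t$ if and only if $t\mapsto f_t(F_t(y))$ is constant, i.e.\ $f_t=\varphi\circ F_t^{-1}$ with $\varphi:=f_0\circ F_0$.

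The only genuinely delicate step is the space-time commutator relation $[\omega_t,X_t]=0$ used at the start, since $\omega_t$ is a time-dependent vector field defined only along the moving hypersurface $\Sigma_t$. The cleanest way to handle this is to work with the smooth map $F:\Sigma\times[0,T]\to M$ and push forward commuting coordinate frames from the product domain; equivalently, one may extend $\omega_t$ arbitrarily to a neighborhood of $\Sigma_t$ in $M$ and note that the final identity $\nabla_t\nu_{\Sigma_t}=\II_{\Sigma_t}(\tau_t)-\nabla_{\Sigma_t}f_t$ depends only on the values of $\omega_t$ on $\Sigma_t$, so the choice of extension is immaterial.
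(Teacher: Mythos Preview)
Your proof is correct and follows essentially the same approach as the paper's: both compute $\scalar{\nabla_t\nu_{\Sigma_t},X_t}=-\scalar{\nu_{\Sigma_t},\nabla_{X_t}\omega_t}$ by differentiating the orthogonality $\scalar{\nu_{\Sigma_t},X_t}=0$ along the flow (using that pushed-forward tangent vectors commute with the flow direction), then expand the right-hand side via the normal/tangential decomposition of $\omega_t$ to obtain the key identity $\nabla_t\nu_{\Sigma_t}=\II_{\Sigma_t}\tau_t-\nabla_{\Sigma_t}f_t$, from which both equivalences follow. Your treatment of the commutator step is somewhat more explicit than the paper's, but the argument is otherwise the same.
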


Recall that the derivative of a vector-field $X$ along a path $t \mapsto \gamma(t)$ is interpreted by employing the connection $\frac{d}{dt} X(\gamma(t)) = \nabla_{\gamma'(t)} X$. 

\begin{proof}
First, observe that $\scalar{\frac{d}{dt} \nu_{\Sigma_t}(F_t(y))  , \nu_{\Sigma_t}} = \frac{1}{2} \frac{d}{dt} \scalar{\nu_{\Sigma_t},\nu_{\Sigma_t}}(F_t(y)) = 0$, so $\frac{d}{dt} \nu_{\Sigma_t}(F_t(y))$ is tangent to $\Sigma_t$. 
Given $y \in \Sigma$ let $e \in T_y\Sigma$ and set $e_t := dF_t(e) \in T_{F_t(y)} \Sigma_t$. Since $\scalar{\nu_{\Sigma_t} , e_t} = 0$, we have:
\[
\scalar{\frac{d}{dt} \nu_{\Sigma_t}(F_t(y)) , e_t} = - \scalar{\nu_{\Sigma_t}, \frac{d}{dt} dF_t(e)} =  -\scalar{\nu_{\Sigma_t} , \nabla_{e_t} \frac{d}{dt} F_t(y)} = -\scalar{\nu_{\Sigma_t} , \nabla_{e_t} \omega_t} ~.
\]
Decomposing $\omega_t$ into its normal $\omega^\nu_t = f_t \nu_{\Sigma_t}$ and tangential $\omega^\tau_t$ components, we calculate:
\[
-\scalar{\nu_{\Sigma_t} , \nabla_{e_t} \omega_t} = -  \nabla_{e_t} f_t - f_t \scalar{\nu_{\Sigma_t}, \nabla_{e_t} \nu_{\Sigma_t}} - \scalar{\nu_{\Sigma_t} , \nabla_{e_t} \omega^\tau_t} ~.
\]
Since $\scalar{\nu_{\Sigma_t}, \nabla_{e_t} \nu_{\Sigma_t}} = \frac{1}{2} \nabla_{e_t} \scalar{\nu_{\Sigma_t}, \nu_{\Sigma_t}} = 0$, $\scalar{\nu_{\Sigma_t} , \nabla_{e_t} \omega^\tau_t} = -\scalar{\II_{\Sigma_t} \omega^\tau_t,e_t}$ and since $e$ and hence $e_t$ were arbitrary, we conclude that:
\[
\frac{d}{dt} \nu_{\Sigma_t}(F_t(y))  = -\nabla_{\Sigma_t} f_t + \II_{\Sigma_t} \omega^\tau_t ~,
\]
and so the first assertion follows. The second assertion follows by calculating:
\[
\frac{d}{dt} \omega^\nu_{t}(F_t(y)) = \frac{d}{dt} (f_t \nu_{\Sigma_t})(F_t(y)) = (\frac{d}{dt} f_t(F_t(y))) \nu_{\Sigma_t}(F_t(y)) + f_t(F_t(y)) \frac{d}{dt} \nu_{\Sigma_t}(F_t(y)) ~.
\]
We see that $\omega^\nu_{t}$ is parallel along the flow if and only if both normal and tangential components on the right-hand-side above vanish, reducing to the first assertion in conjunction with the requirement that $f_t$ remain constant along the flow, i.e. $f_t = \varphi \circ F_t^{-1}$. 
\end{proof}

Consequently, given a strictly locally-convex compact set $\Omega \subset (M,g)$ with $C^2$ smooth boundary which is bounded away from $\partial M$, the region bounded by $F_t(\partial \Omega) \subset (M,g)$ with initial conditions $F_0 = Id$ and $\varphi \in C^2(\partial \Omega)$, if it exists, will be referred to as the ``Riemannian Minkowski Extension of $\Omega$ by $t \varphi$" and denoted by $\Omega + t \varphi$. 
Note that this makes sense as long as the Parallel Normal Flow is a diffeomorphism which preserves the aforementioned convexity and boundedness away from $\partial M$ up until time $t$ - we will say in that case that the Riemannian Minkowski extension is ``well-posed". When $\varphi \equiv 1$ on $\partial \Omega$, we obtain the usual geodesic extension $\Omega_t$. Note that $\varphi$ need not be positive to make sense of this operation, and that multiplying $\varphi$ by a positive constant is just a time re-parametrization of the flow. Also note that this operation only depends on the geometry of $(M,g)$, and not on the measure $\mu$, in accordance with the classical Euclidean setting. 

\subsubsection{Homogeneous Monge-Amp\`ere Equation and Short-Time Existence in Analytic Case}

We now briefly explain the relation of our flow to a homogeneous Monge-Amp\`ere equation on the exterior of $\Omega$. 
Consider the function $u$ whose $t$-level sets are precisely the hypersurfaces $\Sigma_t \subset M$:
\[
u(F_t(y))=t \;\;, \;\;   y\in \Sigma .  
\]
Differentiating this formula in $t$ yields  $1 = \scalar{ \nabla u(F_t(y)), \frac{d}{dt} F_t(y) } = \scalar{\nabla u(F_t(y)), \omega_t(F_t(y))}$. 
But since $\nu_{\Sigma_t}(F_t(y)) = \frac{\nabla u(F_t)}{|\nabla u(F_t)|}$, one immediately obtains:
\begin{equation} \label{eq:u-parallel}
|\nabla u(F_t(y))| = \frac{1}{\varphi(y)}.
\end{equation}

\begin{corollary}
The vector $\nabla u(F_t(y))$ is parallel along the flow.
\end{corollary}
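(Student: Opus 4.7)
The plan is to reduce the claim directly to Proposition \ref{prop:normal-parallel}, which already guarantees that the unit normal $\nu_{\Sigma_t}$ is parallel along trajectories of the Parallel Normal Flow, together with the scalar identity (\ref{eq:u-parallel}).

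First I would observe that since the level sets of $u$ are precisely the hypersurfaces $\Sigma_t$, the gradient $\nabla u$ is everywhere normal to them, so along the trajectory $\gamma(t) := F_t(y)$ we may write
\[
\nabla u(\gamma(t)) = |\nabla u(\gamma(t))| \, \nu_{\Sigma_t}(\gamma(t)) = \frac{1}{\varphi(y)} \, \nu_{\Sigma_t}(\gamma(t)),
\]
where the second equality uses (\ref{eq:u-parallel}). The crucial point is that the scalar coefficient $1/\varphi(y)$ depends on the fixed label $y \in \Sigma$ and is therefore \emph{constant in} $t$ along this trajectory; this is exactly the feature of our flow that $\varphi$ is prescribed on $\Sigma$ once and for all, rather than recomputed from the geometry of $\Sigma_t$.

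Next I would apply the product rule for the covariant derivative along the curve $\gamma$. Recalling the convention $\frac{d}{dt} X(\gamma(t)) = \nabla_{\gamma'(t)} X$, we compute
\[
\frac{d}{dt}\nabla u(\gamma(t)) = \frac{d}{dt}\!\left(\frac{1}{\varphi(y)}\right) \nu_{\Sigma_t}(\gamma(t)) + \frac{1}{\varphi(y)} \, \frac{d}{dt} \nu_{\Sigma_t}(\gamma(t)).
\]
The first term vanishes because $\varphi(y)$ is independent of $t$. For the second term, Proposition \ref{prop:normal-parallel} (applied to $\omega_t = \varphi_t \nu_{\Sigma_t} + \II_{\Sigma_t}^{-1}\nabla_{\Sigma_t}\varphi_t$, which is precisely of the form (\ref{eq:char}) with $f_t = \varphi \circ F_t^{-1}$) gives $\frac{d}{dt}\nu_{\Sigma_t}(\gamma(t)) = 0$. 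Hence $\nabla_{\gamma'(t)} \nabla u = 0$, which is the claimed parallelism.

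There is really no serious obstacle: the entire content of the corollary is that the factorization $\nabla u = \varphi^{-1}\, \nu_{\Sigma_t}$ along trajectories converts the normal-parallel property of Proposition \ref{prop:normal-parallel} into parallelism of $\nabla u$ itself. The only minor point to verify carefully is the justification of (\ref{eq:u-parallel}) off $\Sigma_0$, but this is automatic by parametrization-invariance of the flow: the same argument applied with initial time $t$ in place of $0$ shows $|\nabla u(F_t(y))| = 1/\varphi(y)$ for all admissible $t$, so the scalar factor genuinely is constant along each trajectory.
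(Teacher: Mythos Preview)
Your proof is correct and follows essentially the same approach as the paper: factor $\nabla u(F_t(y)) = |\nabla u(F_t(y))|\,\nu_{\Sigma_t}(F_t(y))$, note that the scalar factor is constant along the trajectory by (\ref{eq:u-parallel}), and invoke Proposition~\ref{prop:normal-parallel} for the parallelism of $\nu_{\Sigma_t}$. Your additional remark about justifying (\ref{eq:u-parallel}) for all $t$ is unnecessary, since the derivation of (\ref{eq:u-parallel}) in the paper already applies at every $t$, not just $t=0$.
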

\begin{proof}
Immediate since $\nabla u(F_t(u)) = \abs{\nabla u(F_t(y))} \nu_{\Sigma_t}(F_t(y))$. $\abs{\nabla u(F_t(y))}$ is constant along the flow by (\ref{eq:u-parallel}), and $\nu_{\Sigma_t}(F_t(y))$ is parallel along the flow by Proposition \ref{prop:normal-parallel}. 
\end{proof}

In particular, we deduce that $\omega_t$ is a zero eigenvector for $\nabla^2 u$, as $\nabla^2 u \cdot \omega_t =
\nabla_{\omega_t} \nabla u=0$.  We therefore see that $u$ solves the following homogeneous Monge-Amp\`ere boundary value problem:
\begin{equation} \label{eq:MA}
\det \nabla^2 u \equiv 0, \ u|_{\Sigma_0} \equiv 0, \ u_\nu |_{\Sigma_0}(x) = \frac{1}{\varphi_0(x)},
\end{equation}
and that the trajectories of the Normal Parallel Flow are precisely the characteristic curves of this PDE. 

The short-time solution for the above boundary value problem (and hence our original flow) can be established by a standard application of the Cauchy--Kowalevskaya theorem, provided that all the data (including the metric) is analytic and $\varphi_0>0$, $\II_{\Sigma_0}>0$; further investigation into the short-time existence will be conducted elsewhere (cf. \cite{RubinsteinZelditch-CauchyProblemForHMA-II,RubinsteinZelditch-CauchyProblemForHMA-III}). We conclude this subsection with a couple of additional observations regarding the classical Euclidean setting. 

\smallskip

The relation between the homogeneous Monge-Amp\`ere equation and interpolation or infimum-convolution in the context of Banach Space Theory or K\"{a}hler geometry is known (e.g. \cite{Semmes-InterpolationOfBanachSpaces,CorderoKlartag-Interpolation,RubinsteinZelditch-CauchyProblemForHMA-II,RubinsteinZelditch-CauchyProblemForHMA-III,ArtsteinRubinstein-Polarity}). However,  even in the Euclidean setting, the above explicit description seems to have been previously unnoted in the literature: given two compact convex bodies $K,L \subset \Real^n$ containing the origin (say smooth and strictly convex), the entire family $\set{K + tL}_{t \geq 0}$ is obtained as the $t$-sub-level sets of a (convex) solution $u$ to the homogeneous Monge-Amp\`ere equation (\ref{eq:MA}) on $\Real^n \setminus K$ with $\Sigma_0 = \partial K$ and $\varphi_0 = h_L \circ \nu_{\partial K}$. 

 Similarly, if $K \subset L$ are convex as above, it follows that the homogeneous Monge-Amp\`ere equation:
\[
\det \nabla^2 u \equiv 0, \ u|_{\partial K} \equiv 0, \  u|_{\partial L} \equiv 1 ,
\]
admits a convex solution $u : L \setminus K \rightarrow [0,1]$ whose $t$-sub-level sets ($t \in [0,1]$) are precisely $(1-t) K + t L$. Indeed, the corresponding Normal Parallel Flow is then $\partial K \ni x \mapsto (1-t) x + t \nu^{-1}_{\partial L} \circ \nu_{\partial K}(x)$ mapping $\partial K$ onto $\partial ((1-t) K + t L)$.
The analogous statement in the Riemannian setting will be developed elsewhere. 

\smallskip

Lastly, we obtain using the Parallel Normal Flow an explicit map $T : K \rightarrow L$ so that $(Id + t T)(K) = K + t L$ for all $t \geq 0$; simply set:
\[
 T(x) =  \norm{x}_K \nu_{\partial L}^{-1} \circ \nu_{\partial K}( x / \norm{x}_K) ,
\]
and use homogeneity and the fact that $(Id + t T)(\partial K) = \partial ( K + t L)$. A non-explicit map using Optimal-Transport has been previously constructed by Alesker, Dar and V. Milman \cite{AleskerDarMilman-RemarkableDiffeo}, and it would be interesting to see how to use our explicit map for deducing Alexandrov--Fenchel inequalities, as in \cite{AleskerDarMilman-RemarkableDiffeo}.

\subsection{Riemannian Brunn-Minkowski} \label{subsec:Full-BM}

We have seen that Riemannian Minkowski extension coincides with Minkowski summation in the Euclidean setting: $K + t \cdot h_L = K + t L$. We do not go here into analyzing the well-posedness of this operation in the general (non-analytic) case, but rather concentrate on using this operation to derive the following Riemannian generalization of the Brunn-Minkowski  inequality. 

\begin{thm}[Riemannian Brunn--Minkowski Inequality] \label{thm:Full-BM}
Let $\Omega \subset (M,g)$ denote a strictly locally-convex ($\II_{\partial \Omega} > 0$) compact set with $C^2$ smooth boundary which is bounded away from $\partial M$, and let $\varphi \in C^2(\partial \Omega)$. Let $\Omega_t$ denote the Riemannian Minkowski extension $\Omega_t := \Omega + t \varphi$, and assume that it is well-posed for all $t \in [0,T]$. Assume that $(M,g,\mu)$ satisfies the $\CD(0,N)$ condition ($1/N \in (-\infty,1/n]$). Then the function
\[
t \mapsto N \mu(\Omega_t)^{1/N} 
\]
is concave on $[0,T]$. 
\end{thm}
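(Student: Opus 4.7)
The plan is to reduce the concavity assertion to the generalized Colesanti inequality (Theorem \ref{thm:Colesanti}) applied pointwise along the Parallel Normal Flow. Write $V(t) := \mu(\Omega_t)$. A direct computation of $\frac{d^2}{dt^2}(N V(t)^{1/N})$ shows that concavity of $t \mapsto N V(t)^{1/N}$ on $[0,T]$ is equivalent to the pointwise differential inequality
\[
V(t)\, V''(t) \;\leq\; \frac{N-1}{N}\,V'(t)^{2} \quad \forall t \in [0,T] ~.
\]
By the semigroup property of the Parallel Normal Flow --- $\Omega_{t+s}$ equals the Riemannian Minkowski extension of $\Omega_t$ by $s\,\varphi_t$ --- together with the well-posedness assumption (which keeps $\II_{\partial \Omega_t} > 0$ and $\Omega_t$ bounded away from $\partial M$ for every $t \in [0,T]$), it suffices to establish this inequality at the initial time $t=0$, taking $\Omega := \Omega_0$ and $\varphi := \varphi_0$.

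First variation: since only the normal component $\varphi$ of the generator $\omega_0 = \varphi\,\nu + \II_{\partial \Omega}^{-1}\nabla_{\partial \Omega}\varphi$ affects the enclosed weighted volume, the standard formula gives
\[
V'(0) = \int_{\partial \Omega} \varphi\, d\mu_{\partial \Omega} ~.
\]
For the second variation I pass to the Lagrangian picture: let $J_t$ denote the Radon--Nikodym density of $F_t^{*} d\mu_{\partial \Omega_t}$ with respect to $d\mu_{\partial \Omega}$. The defining relation $\varphi_t \circ F_t = \varphi$ allows us to write $V'(t) = \int_{\partial \Omega} \varphi\, J_t\, d\mu_{\partial \Omega}$, and a further differentiation yields
\[
V''(0) = \int_{\partial \Omega}\varphi\, \partial_t J_t\big|_{t=0}\, d\mu_{\partial \Omega} ~.
\]
The first-variation formula for a weighted hypersurface measure, applied to a generating field with normal component $\varphi$ and tangential component $\tau = \II_{\partial \Omega}^{-1}\nabla_{\partial \Omega}\varphi$, produces
\[
\partial_t J_t\big|_{t=0} \;=\; H_\mu\,\varphi \;+\; \text{div}_{\mu_{\partial \Omega}}\!\brac{\II_{\partial \Omega}^{-1}\nabla_{\partial \Omega}\varphi} ~,
\]
where $\text{div}_{\mu_{\partial \Omega}}$ is the weighted divergence on the closed hypersurface $\partial \Omega$. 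Substituting and integrating by parts --- no boundary terms, as $\partial \Omega$ is closed --- yields the key identity
\[
V''(0) \;=\; \int_{\partial \Omega} H_\mu\,\varphi^{2}\, d\mu_{\partial \Omega} \;-\; \int_{\partial \Omega} \scalar{\II_{\partial \Omega}^{-1}\nabla_{\partial \Omega}\varphi,\,\nabla_{\partial \Omega}\varphi} d\mu_{\partial \Omega} ~.
\]

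The conclusion is now immediate: Theorem \ref{thm:Colesanti} applied to $f = \varphi$ on $\partial \Omega$ rearranges to
\[
V''(0) \;\leq\; \frac{N-1}{N}\, \frac{\brac{\int_{\partial \Omega}\varphi\, d\mu_{\partial \Omega}}^{2}}{\mu(\Omega)} \;=\; \frac{N-1}{N}\,\frac{V'(0)^{2}}{V(0)} ~,
\]
which is exactly the required differential inequality at $t=0$. Running the same argument with $(\Omega_t, \varphi_t)$ in place of $(\Omega, \varphi)$ at every $t \in [0,T]$ gives the inequality throughout, hence the claimed concavity on $[0,T]$. The main technical point to verify is the decomposition $\partial_t J_t = H_\mu \varphi + \text{div}_{\mu_{\partial \Omega}}(\tau)$ in the weighted setting: although the tangential part $\tau$ leaves the evolving set $\Omega_t$ unchanged, it is essential for maintaining the Lagrangian identity $\varphi_t \circ F_t = \varphi$, and it enters precisely as a surface weighted-divergence, so that integration by parts produces exactly the quadratic form $\scalar{\II_{\partial \Omega}^{-1}\nabla_{\partial \Omega}\varphi, \nabla_{\partial \Omega}\varphi}$ that appears on the right-hand side of Colesanti's inequality. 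Once this first-variation formula is verified, everything else reduces to a direct application of Theorem \ref{thm:Colesanti}.
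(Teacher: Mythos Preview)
Your proposal is correct and follows essentially the same approach as the paper: reduce to $t=0$ via the semigroup property, compute the first and second variations of $\mu(\Omega_t)$ through the (weighted) Jacobian of $F_t$, decompose the generator into normal and tangential parts to obtain $V''(0) = \int_{\partial\Omega} H_\mu \varphi^2\, d\mu_{\partial\Omega} - \int_{\partial\Omega}\scalar{\II_{\partial\Omega}^{-1}\nabla_{\partial\Omega}\varphi,\nabla_{\partial\Omega}\varphi}\, d\mu_{\partial\Omega}$, and then invoke Theorem~\ref{thm:Colesanti}. The paper supplies the explicit verification of the step you flag --- it computes $\frac{d}{dt}\mathrm{Jac}\, F_t = \div_{\Sigma_t}\omega_t$ directly and then decomposes $\div_\Sigma \omega - \scalar{\nabla V,\omega} = \varphi H_{\Sigma,\mu} + \div_{\Sigma,\mu}\tau$ --- but otherwise the two arguments are identical.
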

\begin{proof}
Set $\Sigma := \partial \Omega$, $F_0 = Id$, and recall that our evolution equation is:
\begin{equation} \label{eq:Jac-Flow}
\frac{d}{dt} F_t(y) = \omega_t(F_t(y))  := \varphi(y) \nu_{\Sigma_t}(F_t(y)) + \tau_t(F_t(y)) ~,~ F_0(y) = Id ~,~ y \in \Sigma ~,~ t \in [0,T] ~.
\end{equation}
As previously explained, it is enough to perform all the analysis at time $t=0$. 

Clearly, the first variation of $\mu(\Omega_t)$ only depends on the normal velocity $\varphi$, and so we have:
\[
\frac{d}{dt} \mu(\Omega_t)|_{t=0} = \int_{\Sigma} \varphi \exp(-V) d\vol_{\Sigma} ~.
\]
By the semi-group property, it follows that:
\[
\frac{d}{dt} \mu(\Omega_t) = \int_{\Sigma_t} \varphi \circ F_t^{-1} \exp(-V) d\vol_{\Sigma_t} ~.
\]
 Since $F_t$ is a diffeomorphism for small $t \geq 0$, we obtain by the change-of-variables formula:
\begin{equation} \label{eq:Jac-base}
\frac{d}{dt} \mu(\Omega_t) = \int_{\Sigma} \varphi \exp(-V \circ F_t) \; \Jac F_t \; d\vol_{\Sigma} ~,
\end{equation}
where $\Jac F_t(y)$ denotes the Jacobian of $(\Sigma,g|_{\Sigma}) \ni y \mapsto F_t(y) \in (\Sigma_t, g|_{\Sigma_t})$, i.e. the determinant of $d_y F_t : (T_y \Sigma,g_y) \rightarrow (T_{F_t(y)} \Sigma_t, g_{F_t(y)})$. 

As is well known, $\frac{d}{dt} \Jac F_t = \div_{\Sigma_t} \frac{d}{dt} F_t$; we briefly sketch the argument. It is enough to show this for $t=0$ and for a $y \in \Sigma$ so that $\scalar{\frac{d}{dt} F_t(y) , \nu_{\Sigma}(y)} \neq 0$. Fix an orthonormal frame $e_1,\ldots,e_n$ in $T M$ so that $e_n$ coincides with $\nu_{\Sigma_t}$ in a neighborhood of $(y,0)$ in $M \times \Real$, and hence $e_1,\ldots,e_{n-1}$ is a basis for $T_{F(y,t)} \Sigma$. Since $d F_0 = Id$, it follows that:
\[
\frac{d}{dt} \Jac F_t(y)  = tr \brac{\frac{d}{dt} d_y F_t } = \sum_{i=1}^{n-1} \frac{d}{dt} \scalar{d_y F_t(e_i(y)) , e_i(F_t(y))} ~.
\]
Now as $F_0 = Id$ and $\frac{d}{dt} F_t|_{t=0} = \omega_0$, we have at $(y,0)$ (denoting $\omega = \omega_0$):
\[
\frac{d}{dt} \scalar{d_y F_t(e_i) , e_i(F_t(y))}|_{t=0} = \scalar{\frac{d}{dt} d_y F_t(e_i) |_{t=0} , e_i} + \scalar{e_i , \nabla_{\frac{d}{dt} F_t|_{t=0}} e_i}  = \scalar{\nabla_{e_i} \omega, e_i} + \scalar{e_i , \nabla_{\omega} e_i} ~.
\]
But $\scalar{e_i, \nabla_{\omega} e_i} = \frac{1}{2} \nabla_{\omega} \scalar{e_i,e_i} = 0$, and so we confirm that $\frac{d}{dt} \Jac F_t = \div_{\Sigma_t} \omega_t$. 

Now, taking the derivative of (\ref{eq:Jac-base}) in $t$, we obtain:
\[
\frac{d^2}{(dt)^2} \mu(\Omega_t)|_{t=0} = \int_{\Sigma} \varphi  (\div_{\Sigma} \omega - \scalar{\nabla V,\omega}) \exp(-V) d\vol_{\Sigma} ~.
\]
Recall that $\omega = \varphi \nu_{\Sigma} + \tau$ (denoting $\tau = \tau_0$), so:
\[
 \div_{\Sigma} \omega  - \scalar{\nabla V,\omega} = \varphi (H_{\Sigma,g} - \scalar{\nabla V,\nu_\Sigma}) + \div_{\Sigma} \tau - \scalar{\nabla_{\Sigma} V, \tau}  =  \varphi H_{\Sigma,\mu} + \div_{\Sigma,\mu} \tau ~.
\]
 Plugging this above and integrating by parts, we obtain:
 \[
 \frac{d^2}{(dt)^2} \mu(\Omega_t)|_{t=0} = \int_{\Sigma} H_{\Sigma,\mu} \varphi^2 d\mu_\Sigma - \int_{\Sigma} \scalar{\nabla_{\Sigma} \varphi,\tau} d\mu_\Sigma .
 \]
Recalling that $\tau = \II_{\Sigma}^{-1} \nabla_{\Sigma} \varphi$ and
applying Theorem \ref{thm:Colesanti}, we deduce that:
\[
\frac{d^2}{(dt)^2} \mu(\Omega_t) |_{t=0} \leq \frac{N-1}{N} \frac{ (\int_{\Sigma} \varphi d\mu_\Sigma)^2 }{\mu(\Omega)} = \frac{N-1}{N} \frac{\brac{\frac{d}{dt} \mu(\Omega_t)|_{t=0}}^2}{\mu(\Omega)} ~,
\]
which is precisely the content of the assertion.
\end{proof}

\begin{rem} \label{rem:other-gen-BM}
Other more standard generalizations of the Brunn--Minkowski inequality in the weighted Riemannian setting and in the even more general metric-measure space setting, for spaces satisfying the $\CD(\rho,N)$ condition, have been obtained by Cordero-Erausquin--McCann--Schmuckenshl\"{a}ger \cite{CMSInventiones,CMSManifoldWithDensity}, Sturm \cite{SturmCD12}, Lott--Villani \cite{LottVillaniGeneralizedRicci} and Ohta \cite{Ohta-NegativeN}, using the theory of optimal-transport. In those versions, Minkowski interpolation $(1-t) K + t L$ is replaced by geodesic interpolation of two domains, an operation whose existence does not require any a-priori justification, and which is not confined to convex domains. However, our version has the advantage of extending Minkowski summation $K + t L$ as opposed to interpolation, so we just need a single domain $\Omega_0$ and an initial condition $\varphi_0$ on the normal derivative to $\partial \Omega_0$; this may consequently be better suited for compensating the lack of homogeneity in the Riemannian setting and obtaining isoperimetric inequalities. There seem to be some interesting connections between the Parallel Normal Flow and an appropriate optimal-transport problem and Monge--Amp\`ere equation, but this is a topic for a separate note. In this context, we mention the work by V. I. Bogachev and the first named author \cite{BogachevKolesnikovFlow}, who showed a connection between the Gauss curvature flow and an appropriate optimal transport problem. 
\end{rem}

\begin{rem}
While we do not go into this here, it is clear that in analogy to the Euclidean setting, one may use the Riemannian Minkowski extension operation to define the $k$-th Riemannian mixed volume of a strictly locally-convex $K$ and $\varphi \in C^2(\partial K)$ by taking the $k$-th variation of $t \mapsto \mu(K + t \varphi)$ and normalizing appropriately. It is then very plausible to expect that these mixed volumes should satisfy Alexandrov--Fenchel type inequalities, in analogy to the original inequalities in the Euclidean setting. 
\end{rem}

\subsection{Comparison with the Borell--Brascamp--Lieb Theorem} \label{subsec:BBL}

Let $\mu$ denote a Borel measure with convex support $\Omega$ in Euclidean space $(\Real^n,\abs{\cdot})$. In this Euclidean setting, it was shown by Borell \cite{BorellConvexMeasures} and independently by Brascamp and Lieb \cite{BrascampLiebPLandLambda1}, that if $(\Omega,\abs{\cdot},\mu)$ satisfies $\CD(0,N)$, $1/N \in [-\infty,1/n]$, then for all Borel subsets $A,B \subset \Real^n$ with $\mu(A),\mu(B) > 0$:
\begin{equation} \label{eq:BBL}
\mu((1-t) A + t B) \geq \brac{(1-t) \mu(A)^{\frac{1}{N}} + t \mu(B)^{\frac{1}{N}}}^N ~,~ \forall t \in [0,1] ~.
\end{equation}
Consequently, since $(1-t) K + t K = K$ when $K$ is convex, by using $A = K + t_1 L$ and $B = K + t_2 L$ for two convex subsets $K,L\subset \Omega$, it follows that the function:
\[
t \mapsto N \mu(K + t L)^{\frac{1}{N}} 
\]
is concave on $\Real_+$. Clearly, Corollary \ref{cor:Geodesic-BM} and Theorem \ref{thm:Full-BM} are generalizations to the Riemannian setting of this fact, and in particular provide an alternative proof in the Euclidean setting. The above reasoning perhaps provides some insight as to the reason behind the restriction to convex domains in the concavity results of this section. 

We mention in passing that when the measure $\mu$ is homogeneous (in the Euclidean setting), one does not need to restrict to convex domains, simply by rescaling $A$ in (\ref{eq:BBL}). See \cite{EMilmanRotemHomogeneous} for isoperimetric applications. 

\subsection{The Weingarten Curvature Wave Equation} \label{subsec:strange-flow}

To conclude this section, we observe that there is another natural evolution equation which yields the concavity of $N \mu(\Omega_t)^{1/N}$. 
Assume that $\varphi$ in (\ref{eq:flow0}) evolves according to the following heat-equation on the evolving weighted-manifold $(\Sigma_t,\II_{\Sigma_t},\mu_{\Sigma_t})$ equipped the Weingarten metric $\II_{\Sigma_t}$ and the measure $\mu_{\Sigma_t} := \exp(-V) d\vol_{g_{\Sigma_t}}$, $g_{\Sigma_t} := g|_{\Sigma_t}$:
\begin{equation} \label{eq:flow1}
\frac{d}{dt} \log \varphi(y,t) = L_{(\Sigma_t,\text{\rm{II}}_{\Sigma_t}, \mu_{\Sigma_t})}(\varphi_t)(F_t(y)) ~,~ \varphi_t := \varphi(F_t^{-1}(\cdot),t) ~,~ \varphi(\cdot,0) = \varphi_0 ~.
\end{equation}
Here $L = L_{(\Sigma_t,\text{\rm{II}}_{\Sigma_t}, \mu_{\Sigma_t})}$ denotes the weighted-Laplacian operator associated to this weighted-manifold, namely:
\begin{equation} \label{eq:flow-L}
L(\psi) = \text{div}_{\text{\rm{II}}_{\Sigma_t},\mu}(\nabla_{\rm{II}_{\Sigma_t}} \psi) = \text{div}_{g_{\Sigma_t},\mu}(\rm{II}^{-1}_{\Sigma_t} \nabla_{g_{\Sigma_t}} \psi) ~.
\end{equation}
The last transition in (\ref{eq:flow-L}) is justified since for any test function $f$:
\begin{multline*}
\int_{\Sigma_t} f \cdot \text{div}_{\rm{II}_{\Sigma_t},\mu}(\nabla_{\rm{II}_{\Sigma_t}} \psi) d\mu_{\Sigma_t} =  - \int_{\Sigma_t} \rm{II}_{\Sigma_t}(\nabla_{\textrm{II}_{\Sigma_t}} \textit{f},\nabla_{\rm{II}_{\Sigma_t}} \psi) d\mu_{\Sigma_t} \\
= - \int_{\Sigma_t} g_{\Sigma_t}(\nabla_{g_{\Sigma_t}} f, \rm{II}^{-1}_{\Sigma_t}  \nabla_{g_{\Sigma_t}} \psi) d\mu_{\Sigma_t} = \int_{\Sigma_t} \textit{f} \cdot \text{div}_{g_{\Sigma_t},\mu}(\II_{\Sigma_t}^{-1} \nabla_{g_{\Sigma_t}} \psi) d\mu_{\Sigma_t} ~.
\end{multline*}
Note that (\ref{eq:flow1}) is precisely the (logarithmic) gradient flow in $L^2(\Sigma_t,\mu_{\Sigma_t})$ for the Dirichlet energy functional on $(\Sigma_t,\rm{II}_{\Sigma_t}, \mu_{\Sigma_t})$:
\[
\varphi \mapsto E(t,\varphi) := \frac{1}{2} \int_{\Sigma_t} \rm{II}_{\Sigma_t}(\nabla_{\rm{II}_{\Sigma_t}} (\varphi_t) , \nabla_{\rm{II}_{\Sigma_t}} (\varphi_t)) d\mu_{\Sigma_t} ~.
\]

Coupling (\ref{eq:flow0}) and (\ref{eq:flow1}), it seems that an appropriate name for the resulting flow would be the ``Weingarten Curvature Wave Equation", since the second derivative in time of $F_t$ in the normal direction to the evolving hypersurface $\Sigma_t$ is equal to the weighted Laplacian on $(\Sigma_t,\II_{\Sigma_t},\mu_{\Sigma_t})$. 
We do not go at all into justifications of existence of such a flow, but rather observe the following:

\begin{thm}[Weingarten Curvature Wave Equation is $N$-concave]
Assume that there exists a smooth solution $(F,\varphi)$ to the system of coupled equations (\ref{eq:flow0}) and (\ref{eq:flow1}) on $\Sigma \times [0,T]$, so that $F_t : \Sigma \rightarrow \Sigma_t \subset (M,g)$ is a diffeomorphism for all $t \in [0,T]$. Assume that $(M,g,\mu)$ satisfies the $\CD(0,N)$ condition ($1/N \in (-\infty,1/n]$), that $\Sigma_t$ are strictly-convex ($\II_{\Sigma_t} > 0$) and bounded away from $\partial M$ for all $t \in [0,T]$. Assume that $\Sigma_t$ are the boundaries of compact domains $\Omega_t$ having $\nu_t$ as their exterior unit-normal field.  Then the function
\[
t \mapsto  N \mu(\Omega_t)^{1/N} 
\]
is concave on $[0,T]$. 
\end{thm}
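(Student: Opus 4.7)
The plan is to mirror the proof of Theorem \ref{thm:Full-BM}, invoking the same semi-group reduction and then the generalized Colesanti inequality (Theorem \ref{thm:Colesanti}) as the decisive estimate. The coupled system (\ref{eq:flow0})--(\ref{eq:flow1}) is autonomous in $t$, hence enjoys a semi-group property: restarting the flow from the state $(\Sigma_s, \varphi_s)$ at any $s \in [0,T]$ reproduces the subsequent trajectory. Therefore I reduce the theorem to verifying at $t=0$ the single infinitesimal concavity inequality
\[
\frac{d^2}{dt^2}\mu(\Omega_t)\Big|_{t=0} \;\leq\; \frac{N-1}{N}\,\frac{\bigl(\tfrac{d}{dt}\mu(\Omega_t)|_{t=0}\bigr)^2}{\mu(\Omega_0)} ,
\]
writing $\Sigma := \Sigma_0 = \partial \Omega_0$ and $\varphi := \varphi(\cdot,0)$. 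The first variation is the standard normal-flow formula $\frac{d}{dt}\mu(\Omega_t)|_{t=0} = \int_\Sigma \varphi\, d\mu_\Sigma$.

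For the second variation I pull the $\mu$-weighted area element back to the fixed $\Sigma$ via $F_t$ and write
\[
\frac{d}{dt}\mu(\Omega_t) = \int_\Sigma \varphi(y,t) \, e^{-V\circ F_t(y)} \, \Jac F_t(y) \, d\vol_\Sigma(y) .
\]
Differentiating at $t=0$, the variation of the density--Jacobian factor under a purely normal flow of speed $\varphi$ is $\frac{d}{dt}(e^{-V\circ F_t}\Jac F_t)|_{t=0} = \varphi H_\mu e^{-V}$, exactly the computation carried out in the proof of Theorem \ref{thm:Full-BM} with the tangential component $\tau$ set to zero. The difference with the Parallel Normal Flow is that $\varphi(y,t)$ now depends on $t$, producing the extra term $\int_\Sigma \partial_t\varphi(y,0)\,d\mu_\Sigma$; by the evolution equation (\ref{eq:flow1}) we have $\partial_t\varphi(\cdot,0) = \varphi\cdot L_{(\Sigma,\II_\Sigma,\mu_\Sigma)}\varphi$, so
\[
\frac{d^2}{dt^2}\mu(\Omega_t)\Big|_{t=0} \;=\; \int_\Sigma H_\mu\,\varphi^2\, d\mu_\Sigma \;+\; \int_\Sigma \varphi\, L_{(\Sigma,\II_\Sigma,\mu_\Sigma)}\varphi\, d\mu_\Sigma .
\]
Since $\Sigma$ is a closed hypersurface, I integrate the second term by parts with respect to the Weingarten metric $\II_\Sigma$ and the measure $\mu_\Sigma$ to convert it into $-\int_\Sigma \scalar{\II_\Sigma^{-1}\nabla_\Sigma\varphi,\nabla_\Sigma\varphi}\,d\mu_\Sigma$. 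Applying Theorem \ref{thm:Colesanti} with $f=\varphi$ (valid since $(\Omega_0,g,\mu)$ satisfies $\CD(0,N)$ and $\II_\Sigma>0$) then bounds the resulting expression above by $\frac{N-1}{N}(\int_\Sigma \varphi\,d\mu_\Sigma)^2 / \mu(\Omega_0)$, which is the desired infinitesimal concavity.

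The conceptual heart of the plan, and the reason (\ref{eq:flow1}) is defined with the Weingarten-Laplacian heat equation rather than any other parabolic equation, is that its Dirichlet energy is exactly $\int_\Sigma \scalar{\II_\Sigma^{-1}\nabla_\Sigma\varphi,\nabla_\Sigma\varphi}\,d\mu_\Sigma$ --- precisely the right-hand side of the generalized Colesanti inequality. The heat flow (\ref{eq:flow1}) is thus engineered so that the new $\partial_t\varphi$ contribution, absent in the Parallel Normal Flow where $\varphi$ is Lagrangian-constant, produces via a single integration by parts exactly the term needed to close Colesanti's estimate. I do not foresee a serious obstacle beyond the verification of the first- and second-variation formulas already used in Theorem \ref{thm:Full-BM} (now simplified by $\tau=0$) and the boundary-free integration by parts on each closed $\Sigma_t$; the existence of a smooth solution with $F_t$ a diffeomorphism and $\II_{\Sigma_t}>0$ throughout $[0,T]$ are part of the hypotheses.
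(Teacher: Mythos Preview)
Your proposal is correct and follows essentially the same route as the paper's proof: compute the first and second variations of $\mu(\Omega_t)$, use the evolution equation (\ref{eq:flow1}) to identify the $\partial_t\varphi$ contribution as $\varphi\, L_{(\Sigma,\II_\Sigma,\mu_\Sigma)}\varphi$, integrate by parts against $\mu_\Sigma$ to produce $-\int_\Sigma \scalar{\II_\Sigma^{-1}\nabla_\Sigma\varphi,\nabla_\Sigma\varphi}\,d\mu_\Sigma$, and close with Theorem \ref{thm:Colesanti}. The only cosmetic difference is that you invoke the semi-group property to reduce to $t=0$, whereas the paper carries out the same computation directly at a general $t\in[0,T]$; this changes nothing substantive.
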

\begin{proof}
Denote $\Phi_t := \frac{d}{dt} \varphi(F_t^{-1}(\cdot),t)$. It is easy to verify as in the proof of Theorem \ref{thm:Full-BM} that:
\[
\frac{d}{dt} \mu(\Omega_t) = \int_{\partial \Omega_t} \varphi_t d\mu_{\partial \Omega_t} ~,~ \frac{d}{dt} \mu(\partial \Omega_t) = \int_{\partial \Omega_t} H_\mu \varphi_t d\mu_{\partial \Omega_t} ~,
\]
and that:
\[
\frac{d^2}{(dt)^2} \mu(\Omega_t) = \int_{\partial \Omega_t} (\Phi_t  + H_\mu \varphi_t^2) d\mu_{\partial \Omega_t} ~.
\]
Plugging the evolution equation (\ref{eq:flow1}) above and integrating by parts, we obtain:
\begin{eqnarray*}
\frac{d^2}{(dt)^2} \mu(\Omega_t) &=& \int_{\partial \Omega_t} ( \varphi_t \text{div}_{g_{\Sigma_t},\mu} \brac{\rm{II}^{-1}_{\Sigma_t}  \nabla_{g_{\Sigma_t}} \varphi_t} + H_\mu \varphi_t^2) d\mu_{\partial \Omega_t} \\
& = & \int_{\partial \Omega_t} (H_\mu \varphi_t^2 - \scalar{\II^{-1}_{\Sigma_t} \nabla_{g_{\Sigma_t}} \varphi_t, \nabla_{g_{\Sigma_t}} \varphi_t}) d\mu_{\partial \Omega_t} ~.
\end{eqnarray*}
Applying Theorem \ref{thm:Colesanti}, we deduce that:
\[
\frac{d^2}{(dt)^2} \mu(\Omega_t) \leq \frac{N-1}{N} \frac{ (\int_{\partial \Omega_t} \varphi_t d\mu_{\partial \Omega_t})^2 }{\mu(\Omega_t)} = \frac{N-1}{N} \frac{\brac{\frac{d}{dt} \mu(\Omega_t)}^2}{\mu(\Omega_t)} ~,
\]
which is precisely the content of the assertion.
\end{proof}

\section{Isoperimetric Applications} \label{sec:Apps}

We have seen in the previous section that under the $\CD(0,N)$ condition and for various geometric evolution equations, including geodesic extension, the function $t \mapsto N \mu(\Omega_t)^{1/N}$ is concave as long as $\Omega_t$ remain strictly locally-convex, $C^2$ smooth, and bounded away from $\partial M$. Consequently, the following derivative exists in the wide-sense:
\[
\mu^+(\Omega) := \frac{d}{dt} \mu(\Omega_t)|_{t=0} = \lim_{t \rightarrow 0+}\frac{\mu(\Omega_{t}) - \mu(\Omega)}{t} ~.
\]
$\mu^+(\Omega)$ is the induced ``boundary measure" of $\Omega$ with respect to $\mu$ and the underlying evolution $t \mapsto \Omega_t$. It is well-known and easy to verify (as in the proof of Theorem \ref{thm:Full-BM}) that in the case of geodesic extension, $\mu^+(\Omega)$ coincides with $\mu_{\partial \Omega}(\partial \Omega)$. We now mention several useful isoperimetric consequences of the latter concavity. For simplicity, we illustrate this in the Euclidean setting, but note that all of the results remain valid in the Riemannian setting as long as the corresponding generalizations described in the previous section are well-posed. 

Denote by $\mu^+_L(K)$ the boundary measure of $K$ with respect to $\mu$ and the Minkowski extension $t \mapsto K + t L$, where $L$ is a compact convex set having the origin in its interior.

\begin{prop} \label{prop:isop-app}
Let Euclidean space $(\Real^n,\abs{\cdot})$ be endowed with a measure $\mu$ with convex support $\Omega$, so that $(\Omega,\abs{\cdot},\mu)$ satisfies the $\CD(0,N)$ condition ($1/N \in (-\infty,1/n]$). Let $K \subset \Omega$ and $L \subset \Real^n$ denote two strictly convex compact sets with non-empty interior and $C^2$ boundary. Then:
\begin{enumerate}
\item
The function $t \mapsto N \mu(K + t L)^{1/N}$ is concave on $\Real_+$. 
\item
The following isoperimetric inequality holds:
\[
\mu^+_L(K) \geq \mu(K)^{\frac{N-1}{N}} \sup_{t > 0} N \frac{\mu(K + t L)^{1/N} - \mu(K)^{1/N}}{t} ~.
\]
In particular, if the $L$-diameter of $\Omega$ is bounded above by $D < \infty$ ($\Omega - \Omega \subset D L$), we have:
\[
\mu^+_L(K) \geq \frac{N}{D} \mu(K)^{\frac{N-1}{N}} \brac{\mu(\Omega)^{1/N} - \mu(K)^{1/N}} ~.
\]
Alternatively,  if $\mu(\Omega) = \infty$ and $N \in [n,\infty]$, we have: 
\begin{equation} \label{eq:for-hom}
\mu^+_L(K) \geq \mu(K)^{\frac{N-1}{N}}  \limsup_{t \rightarrow \infty} \frac{N \mu(t L)^{1/N}}{t} ~.
\end{equation}
\item
Define the following ``convex isoperimetric profile":
\[
\I^c_L(v) := \inf \set{\mu^+_L( K)  \; ;\;  \mu(K) = v \text{ , $K \subset \Omega$ has $C^2$ smooth boundary and $\II_{\partial K} > 0$ } } ~.
\]
Then the function $v \mapsto (\I^c_L(v))^{\frac{N}{N-1}}/v$ is non-increasing on its domain. 
\end{enumerate}
\end{prop}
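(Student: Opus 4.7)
All three statements are facets of the same concavity: (2) is the tangent-above-chord reading of (1) at $t = 0$, while (3) is the statement that the derivative of a concave function is non-increasing. My plan is therefore to prove (1) first and to read off (2) and (3) as formal corollaries.

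For (1), the quickest route in the Euclidean $\CD(0,N)$ setting is to invoke the Borell--Brascamp--Lieb inequality (\ref{eq:BBL}) recalled in Subsection \ref{subsec:BBL}. Applying it to $A := K + t_1 L$ and $B := K + t_2 L$, the convexity of $K$ yields the identity $(1-s)A + sB = K + ((1-s)t_1 + s t_2)L$, so (\ref{eq:BBL}) becomes precisely the concavity of $t \mapsto N \mu(K + tL)^{1/N}$ on $\Real_+$. Alternatively, one can verify via Proposition \ref{prop:Euclidean-Flow} that the Riemannian Minkowski extension of $K$ by $t (h_L \circ \nu_{\partial K})$ coincides with $K + tL$ and remains strictly locally-convex and $C^2$ for all $t \ge 0$, so that Theorem \ref{thm:Full-BM} applies directly.

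For (2), I would set $g(t) := N \mu(K + tL)^{1/N}$, concave on $\Real_+$ by (1). A first-variation computation, parallel to the one carried out inside the proof of Theorem \ref{thm:Full-BM}, identifies $g'_+(0) = \mu(K)^{1/N - 1} \mu^+_L(K)$, and concavity gives $g'_+(0) \ge (g(t) - g(0))/t$ for every $t > 0$; rearranging and taking $\sup_{t > 0}$ produces the first displayed inequality. For the diameter bound, pick $x_0 \in K$; then $\Omega - x_0 \subset DL$ forces $\Omega \subset K + DL$, hence $\mu(K + DL) \ge \mu(\Omega)$, and a short case analysis in the sign of $N$ confirms $N \mu(K + DL)^{1/N} \ge N \mu(\Omega)^{1/N}$; substituting $t = D$ into the first inequality then yields the second. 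The homogeneous bound (\ref{eq:for-hom}) follows by letting $t \to \infty$ and using $\mu(K + tL) \ge \mu(x_0 + tL)$ for $x_0 \in K$ and $0 \in L$ (a translation of $L$ that can be arranged without loss of generality).

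For (3), I would exploit the semigroup property $(K + tL) + sL = K + (t+s)L$ to apply the formula from (2) at every time, giving $g'_+(t) = \mu(K + tL)^{1/N - 1} \mu^+_L(K + tL)$; concavity of $g$ then makes this quantity non-increasing in $t$. Given $v_1 < v_2$ in the domain of $\I^c_L$ and $\epsilon > 0$, I would pick an admissible $K_1$ with $\mu(K_1) = v_1$ and $\mu^+_L(K_1) \le \I^c_L(v_1) + \epsilon$, and by continuity of $t \mapsto \mu(K_1 + tL)$ choose $t^* > 0$ with $\mu(K_1 + t^* L) = v_2$. Strict convexity and $C^2$ regularity are preserved under Minkowski sums of strictly convex $C^2$ bodies, so $K_1 + t^* L$ is admissible and $\mu^+_L(K_1 + t^* L) \ge \I^c_L(v_2)$. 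Monotonicity of $g'_+$ then yields
\[
v_1^{1/N - 1}(\I^c_L(v_1) + \epsilon) \ge v_1^{1/N - 1} \mu^+_L(K_1) \ge v_2^{1/N - 1} \mu^+_L(K_1 + t^* L) \ge v_2^{1/N - 1} \I^c_L(v_2);
\]
sending $\epsilon \to 0$ and raising to the positive power $N/(N-1)$ delivers the claim. The main obstacle I expect is the admissibility constraint $K \subset \Omega$ in the definition of $\I^c_L$: if $K_1 + t^* L$ exits $\Omega$, one must either replace it by $(K_1 + t^* L) \cap \Omega$ without decreasing $\mu^+_L$ (since $\mu$ is supported on $\Omega$), or argue by approximation; this is automatic when $\Omega = \Real^n$ but requires some care in general.
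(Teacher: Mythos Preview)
Your proposal is correct, and for parts (2) and (3) it matches the paper's argument in substance (the paper phrases (3) via the profile $\I_K := V' \circ V^{-1}$ of the individual flow $t \mapsto K + tL$, shows $\I_K^{N/(N-1)}(v)/v$ is non-increasing by differentiating, and then takes $\inf_K$; your $\epsilon$-minimizer formulation is equivalent).

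For part (1), however, you diverge from the paper. Your primary route via Borell--Brascamp--Lieb is exactly the alternative the paper acknowledges in the Remark following the Proposition, but deliberately \emph{avoids}: the whole point of the paper's proof is to show that the concavity follows from Theorem~\ref{thm:Full-BM} (i.e.\ from the generalized Colesanti inequality via the Parallel Normal Flow), so that the argument transports to the Riemannian setting where BBL is unavailable. Your secondary suggestion to invoke Theorem~\ref{thm:Full-BM} is the paper's actual approach, but you miss the one nontrivial step: Theorem~\ref{thm:Full-BM} requires $K + tL$ to stay bounded away from $\partial\Omega$, which can fail here. The paper handles this by observing that each trajectory $t \mapsto F_t(y)$ is a straight line (Proposition~\ref{prop:Euclidean-Flow}), hence by convexity of $\Omega$ exits at most once and never re-enters; one checks that this contributes non-positively to the second variation of $\mu(K+tL)$, so the concavity survives. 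Your BBL argument sidesteps this entirely (a genuine simplification in the Euclidean case), at the cost of losing the Riemannian generalization that motivates the paper.

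Finally, the admissibility concern you raise in (3) --- that $K_1 + t^*L$ may exit $\Omega$ --- is real, and the paper's formulation ``$\I^c_L = \inf_K \I_K$'' glosses over the same point. So you have not introduced a gap; you have merely identified one that the paper leaves implicit.
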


\begin{rem}
Given a weighted-manifold $(M,g,\mu)$, recall that the usual isoperimetric profile is defined as:
\[
\I(v) := \inf \set{\mu(\partial A) \;;\;  \mu(A) = v \text{ , $A \subset M$ has $C^2$ smooth boundary}} ~.
\]
When $(M,g,\mu)$ satisfies the $\CD(0,N)$ condition with $N \in [n,\infty]$ and $\II_{\partial M} \geq 0$ ($M$ is locally convex), it is known that $v \mapsto \I(v)^{\frac{N}{N-1}}$ is in fact concave on its domain, implying that $v \mapsto \I(v)^{\frac{N}{N-1}}/v$ is non-increasing (see \cite{EMilman-RoleOfConvexity,EMilmanGeometricApproachPartI,EMilmanNegativeDimension} and the references therein). The proof of this involves crucial use of regularity results from Geometric Measure Theory, and a major challenge is to give a softer proof. In particular, even in the Euclidean setting, an extension of these results to a non-Euclidean boundary measure $\mu^+_L(A)$ is not known and seems technically challenging. The last assertion provides a soft proof for the class of \emph{convex} isoperimetric minimizers, which in fact remains valid for $N < 0$ (cf. \cite{EMilmanNegativeDimension} for \emph{geodesic} extensions). 
\end{rem}

\begin{rem}
As explained in Subsection \ref{subsec:BBL}, it is possible to prove the above assertions using the Borell--Brascamp--Lieb theorem. 
However, this approach would be confined to the Euclidean setting, whereas the proof we give below is not. 
\end{rem}

\begin{proof}[Proof of Proposition \ref{prop:isop-app}]
\hfill
\begin{enumerate}
\item
The first assertion is almost an immediate consequence of the concavity calculation performed in the previous section for the classical Minkowski extension operation $t \mapsto K + t L$. However, in that section we assumed that $K + t L$ is bounded away from the boundary $\partial \Omega$, and we now explain how to remove this restriction. Note that if $y \in \partial K$, then $F_t(y) = y + t \nu^{-1}_L(\nu_K(y))$ is a straight line, as verified in Proposition \ref{prop:Euclidean-Flow}. By the convexity of $\Omega$, this means that this line can at most exit $\Omega$ once, never to return. It is easy to verify that this incurs a non-positive contribution to the calculation of the second variation of $t \mapsto \mu(K + t L)$ in the proof of Theorem \ref{thm:Full-BM}; the rest of the proof remains the same (with the first variation interpreted as the left-derivative). 

More generally, we note here that the concavity statement remains valid if instead of using $\varphi$ which remains constant on the trajectories of the flow, it is allowed to decrease along each trajectory. 
\item
By the concavity from the first assertion, it follows that for every $0<s\leq t$:
\[
N \frac{\mu(K + sL)^{1/N} - \mu(K)^{1/N}}{s} \geq N \frac{\mu(K + tL)^{1/N} - \mu(K)^{1/N}}{t} ~.
\]
Taking the limit as $s \rightarrow 0$, the second assertion follows. 
\item
Given $K \subset \Omega$ with $C^2$ smooth boundary and $\II_{\partial K} > 0$, denote $V(t) := \mu(K + t L )$ and set $\I_{K} := V' \circ V^{-1}$, expressing the boundary measure of $K + t L$ as a function of its measure. Note that $\I_{K}^{\frac{N}{N-1}}(v) / v$ is non-increasing on its domain. Indeed, assuming that $V$ is twice-differentiable, we calculate:
\[
\frac{d}{dv} \frac{\I_{K}^{\frac{N}{N-1}}(v)}{v} = \brac{\brac{\frac{N}{N-1} \frac{V V''}{V'} - V'}\frac{(V')^{\frac{1}{N-1}}}{V^2}} \circ V^{-1}(v)   \leq 0 ~,
\]
and the general case follows by approximation. But since $I^c_L := \inf_{K} \I_K$ where the infimum is over $K$ as above, the third assertion readily follows. 
\end{enumerate}
\end{proof}

\begin{rem}
When in addition $N \mu^{1/N}$ is homogeneous,  i.e. $N \mu(t L)^{1/N} = t N \mu(L)^{1/N}$ for all $t > 0$, it follows by (\ref{eq:for-hom}) that for convex $K$ and $N \in [n,\infty]$:
\[
\mu^+_L(K) \geq \mu(K)^{\frac{N-1}{N}} N \mu(L)^{1/N} ~.
\]
In particular, among all convex sets, homothetic copies of $L$ are isoperimetric minimizers. 
As already eluded to in Subsection \ref{subsec:BBL}, this is actually known to hold for arbitrary Borel sets $K$ (see \cite{CabreRosOtonSerra,EMilmanRotemHomogeneous}). However, this extension from convex to arbitrary Borel sets seems to be a consequence of the homogeneous and linear nature of Euclidean space, and cannot be generalized to a Riemannian setting. 
\end{rem}


\bibliographystyle{plain}

\def\cprime{$'$} \def\textasciitilde{$\sim$}

\end{document}